\newcommand{\nc}{\newcommand}
\nc{\les}{\lesssim}
\nc{\nit}{\noindent}
\nc{\nn}{\nonumber}
\nc{\D}{\partial}
\nc{\diff}[2]{\frac{d #1}{d #2}}
\nc{\diffn}[3]{\frac{d^{#3} #1}{d {#2}^{#3}}}
\nc{\pdiff}[2]{\frac{\partial #1}{\partial #2}}
\nc{\pdiffn}[3]{\frac{\partial^{#3} #1}{\partial{#2}^{#3}}}
\nc{\abs}[1] {\lvert #1 \rvert}
\nc{\cAc}{{\cal A}_c}
\nc{\cE}{{\cal E}}
\nc{\cF}{{\cal F}}
\nc{\cP}{{\cal P}}
\nc{\cV}{{\cal V}}
\nc{\cQ}{{\cal Q}}
\nc{\cGin}{{\cal G}_{\rm in}}
\nc{\cGout}{{\cal G}_{\rm out}}
\nc{\cO}{{\cal O}}
\nc{\Lav}{{\cal L}_{\rm av}}
\nc{\cL}{{\cal L}}
\nc{\cB}{{\cal B}}
\nc{\cZ}{{\cal Z}}
\nc{\cR}{{\cal R}}
\nc{\cT}{{\cal T}}
\nc{\cY}{{\cal Y}}
\nc{\cX}{{\cal X}}
\nc{\cXT}{{{\cal X}(T)}}
\nc{\cBT}{{{\cal B}(T)}}
\nc{\vD}{{\vec \mathcal{D}}}
\nc{\efield}{\mathcal{E}}
\nc{\vE}{{\vec \efield}}
\nc{\vB}{{\vec \mathcal{B}}}
\nc{\vH}{{\vec \mathcal{H}}}
\nc{\mR}{\mathcal R}
\nc{\cM}{\mathcal M}
\nc{\ty}{{\tilde y}}
\nc{\tu}{{\tilde u}}
\nc{\tV}{{\tilde V}}
\nc{\Pc}{{\bf P_c}}
\nc{\bx}{{\bf x}}
\nc{\bX}{{\bf X}}
\nc{\bXYZ}{{\bf XYZ}}
\nc{\bY}{{\bf Y}}
\nc{\bF}{{\bf F}}
\nc{\bS}{{\bf S}}
\nc{\dV}{{\delta V}}
\nc{\dE}{{\delta E}}
\nc{\TT}{{\Theta}}
\nc{\dPsi}{{\delta\Psi}}
\nc{\order}{{\cal O}}
\nc{\Rout}{R_{\rm out}}
\nc{\eplus}{e_+}
\nc{\eminus}{e_-}
\nc{\epm}{e_\pm}
\nc{\sgn}{\,\textrm{sgn}}
\nc{\eps}{\varepsilon}
\nc{\vnabla}{{\vec\nabla}}
\nc{\G}{\Gamma}
\nc{\w}{\omega}
\nc{\mh}{h}
\nc{\mg}{g}
\nc{\vphi}{\varphi}
\nc{\tlambda}{\tilde\lambda}
\nc{\be}{\begin{equation}}
\nc{\ee}{\end{equation}}
\nc{\ba}{\begin{eqnarray}}
\nc{\ea}{\end{eqnarray}}
\nc{\g}{\gamma}
\nc{\ol}{\overline}
\newtheorem{theorem}{Theorem}[section]
\newtheorem{lemma}[theorem]{Lemma}
\newtheorem{prop}[theorem]{Proposition}
\newtheorem{corollary}[theorem]{Corollary}
\newtheorem{defin}[theorem]{Definition}
\newtheorem{rmk}[theorem]{Remark}
\newtheorem{example}[theorem]{Example}
\nc{\pT}{\partial_T}
\nc{\pz}{\partial_z}
\nc{\pt}{\partial_t}
\nc{\la}{\langle}
\nc{\ra}{\rangle}
\nc{\infint}{\int_{-\infty}^{\infty}}
\nc{\halfwidth}{6.5cm}
\nc{\figwidth}{10cm}
\newcommand{\f}{\frac}
\nc{\nlayers}{L} \nc{\nsectors}{M}
\nc{\indicator}{\mathbf{1}}
\nc{\Rhole}{R_{\rm hole}}
\nc{\Rring}{R_{\rm ring}}
\nc{\neff}{n_{\rm eff}}
\nc{\Frem}{F_{\rm rem}}
\nc{\R}{\mathbb R}
\nc{\Z}{\mathbb Z}
\nc{\C}{\mathbb C}
\nc{\DD}{\Delta}
\nc{\cD}{\mathcal D}
\nc{\lnorm}{\left\|}
\nc{\rnorm}{\right\|}
\nc{\rnormp}{\right\|_{\ell^{p,\eps}}}
\nc{\rar}{\rightarrow} 
\newcommand{\framedtext}[1]{%
	\par%
	\noindent\fbox{%
		\parbox{\dimexpr\linewidth-2\fboxsep-2\fboxrule}{#1}%
	}%
}
\begin{document}

\begin{abstract}

	We investigate $L^1\to L^\infty$ dispersive estimates for the one dimensional Dirac  equation with a potential.  In particular, we show that the Dirac evolution satisfies the natural $t^{-\f12}$ decay rate, which may be improved to $t^{-\f32}$ at the cost of spatial weights when the thresholds are regular. 
	We classify the structure of threshold obstructions, showing that there is at most a one dimensional space at each threshold.  We show that, in the presence of a threshold resonance, the Dirac evolution satisfies the natural decay rate, and satisfies the faster weighted bound except for a piece of rank at most two, one per threshold.  Further, we prove high energy dispersive bounds that are near optimal with respect to the required smoothness of the initial data. To do so we use a variant of a high energy argument that was originally developed to study Kato smoothing estimates for magnetic Schr\"odinger operators. This method has never been used before to obtain $L^1 \to L^\infty$ estimates. As a consequence of our analysis we prove a uniform limiting absorption principle, Strichartz estimates and prove the existence of an eigenvalue free region for the one dimensional Dirac operator with a non-self-adjoint potential.
	
\end{abstract}

\title[Dispersive Estimates for  the Dirac Equation]{\textit{On the one dimensional Dirac Equation with potential}}

\author[M.~B. Erdo\smash{\u{g}}an, W.~R. Green]{M. Burak Erdo\smash{\u{g}}an and William~R. Green}
\thanks{The first author is supported by Simons Foundation Grant 634269.
	The second  author is supported by Simons Foundation Grant 511825.}  
\address{Department of Mathematics \\
University of Illinois \\
Urbana, IL 61801, U.S.A.}
\email{berdogan@illinois.edu}
\address{Department of Mathematics\\
Rose-Hulman Institute of Technology \\
Terre Haute, IN 47803, U.S.A.}
\email{green@rose-hulman.edu}

\framedtext{
	This version corrects an error in Lemma~2.2 of the published version which also necessitates changes to the statement of Theorems~1.1 and 2.3 for high energies only.	
	The authors thank Joseph Kraisler, Amir Sagiv and Michael Weinstein for pointing out the error in  the published version.  July 18, 2023.}\\

\maketitle

\section{Introduction}

We consider the linear Dirac equations with potential, in one spatial dimension
\begin{align}\label{eqn:Dirac}
i\partial_t \psi(x,t)=(D_m+V(x))\psi(x,t), \qquad
\psi(x,0)=\psi_0(x).
\end{align}
Here $\psi(x,t) \in \mathbb C^{2 }$ when the spatial
variable $x\in \mathbb R$.  The free Dirac operator
$D_m$ is defined by
\begin{align}\label{eqn:Dmdef}
D_m=i\alpha \partial_x +\beta m= i\begin{pmatrix}
-1 & 0\\ 0 & 1
\end{pmatrix}\partial_x +m\begin{pmatrix}0 & 1\\ 1 & 0 \end{pmatrix}
\end{align}
where $m>0$ is a constant, and 
the Hermitian matrices $\alpha, \beta$ satisfy the anti-commutation relationships
\be\label{eqn:anticomm}
\alpha \beta+\beta \alpha= 
\mathbbm O_{\mathbb C^{2}},\;\;\;\;\;
\alpha^2=\beta^2 = \mathbbm 1_{\mathbb C^{2 }}.
\ee
Physically, the Dirac equation connects the theories of quantum mechanics and special relativity to describe the evolution of quantum particles moving at near luminal speeds.  The model is first order in time to allow for a wave function interpretation of the solution ``spinor" $\psi(x,t)$ while allowing for a Pythagorean energy addition relation, $E^2=(cp)^2+(mc)^2$ where $E$ is the energy of the particle, $m$ is the mass, $c$ is the speed of light and $p$ is the momentum, required by relativistic models. Futher, the Dirac equation allows for the influence of an external potential in a manner that is relativistically invariant.  For a more thorough introduction to the Dirac equation see \cite{Thaller}.

The Dirac equation may be viewed as a square root of a system of
Klein-Gordon, equations.  This motivates the
following relationship, which follows from the relationships in \eqref{eqn:anticomm}
\begin{align}
(D_m-\omega)(D_m+\omega)=(i\alpha \partial_x +\beta m-\omega)
(i\alpha \partial_x +\beta m+\omega)=(-\partial_{xx}+m^2-\omega^2)\mathbbm 1_{\mathbb C^{2 }} .
\end{align}
This allows us to formally define the free Dirac resolvent
operator $\mathcal R_0(z)=(D_m-z)^{-1}$ in terms of the
free resolvent $R_0(z)=(-\Delta-z)^{-1}$ for the Schr\"odinger operator for $z$ in the resolvent set.
That is,
\begin{align}\label{eq:resolv id}
\mathcal R_0(z)=(D_m+z) R_0(z^2-m^2), \,\,\,z\in \C\setminus \sigma(D_m), 
\end{align}
where $\sigma(D_m)$ is the spectrum of the free operator which is purely absolutely continuous and unbounded both above and below:
$$
\sigma(D_m)=(-\infty,-m]\cup [m,\infty).
$$
In one physical interpretation, the Dirac equation couples the evolution of massive particle and anti-particles.
For suitable potential functions $V$, one has a Weyl criterion for $H:=D_m+V$.  That is, $\sigma_{ac}(H)=\sigma_{ac}(D_m)=(-\infty,-m]\cup [m,\infty)$.  For a rather general class of potentials including the ones we consider here, there are no eigenvalues embedded in the continuous spectrum, \cite{BC1}.  

Throughout the paper we use the notation $\la x\ra:=(1+|x|^2)^{\frac12}$.  When we write $|V(x)|\les \la x \ra^{-\delta}$ we mean that each entry $V_{ij}(x)$ of the potential matrix satisfies the bound $|V_{ij}(x)|\les \la x\ra^{-\delta}$.  Similarly, if we write $V\in L^p$ we mean each entry of $V$ is in $L^p$.  The weighted $L^p$ spaces $L^{p,\sigma}$ are defined by $\{f: \la \cdot\ra^{\sigma}f \in L^p\}$.  Many of these spaces are used for $\mathbb C^2$-valued functions, which will be clear from context.   We write $P_{ac}(H)$ to denote projection onto the absolutely continuous spectral subspace of $L^2$ associated to the Dirac operator $H$.  Finally, we write $a-$ to mean $a-\epsilon$ for an arbitrarily small, but fixed $\epsilon>0$.  Similarly, we write $a+$ to mean $a+\epsilon$.  

We say that the threshold energies are regular if there are no distributional solutions to $H\psi= m\psi$ or $H\psi=-m\psi$ for $\psi \in L^\infty$.  This may also be characterized by the uniform boundedness of the resolvent operator $(H-\lambda)^{-1}$ as $\lambda \to m$ (resp. $-m$) between certain weighted $L^2$ spaces.  We provide a detailed characterization of the threshold obstructions in Section~\ref{sec:spec}.

Our first result is    
\begin{theorem}\label{thm:main reg}
	
	Assume that $V$ is self adjoint and  $|V(x)|\les \la x\ra^{-\delta}$.  If the threshold energies $\pm m$ are regular, then
	\begin{enumerate}[i)]
		\item if $\delta>3$, and $|\partial_x V(x)|\les \la x\ra^{-1-}$ we have the dispersive bounds
		\begin{align*}
		&\|e^{-itH}P_{ac}(H)\la H\ra^{-\frac32-}\|_{L^1\to L^\infty} \les  \la t\ra^{-\f12},\\
		&\|e^{-itH}P_{ac}(H)\la H\ra^{-1-}\|_{L^1\to L^\infty} \les  1.
		\end{align*}
		\item if $\delta>5$, and $|\partial_x V(x)|\les \la x\ra^{-2-}$, then for $|t|>1$ we have 
		$$\|e^{-itH}P_{ac}(H)\la H\ra^{-\frac32-}\|_{L^{1,\tau}\to L^{\infty,-\tau}} \les  
		|t|^{-\f12-\tau}, \,\,0\leq \tau\leq 1.$$
	\end{enumerate}
\end{theorem}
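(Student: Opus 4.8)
The plan is to base everything on the Stone formula, writing
\[
e^{-itH}P_{ac}(H)=\frac{1}{2\pi i}\int_{|\lambda|\ge m}e^{-it\lambda}\big[\cR_V^+(\lambda)-\cR_V^-(\lambda)\big]\,d\lambda ,
\]
where $\cR_V^\pm(\lambda)=\lim_{\eps\to0^+}(H-(\lambda\pm i\eps))^{-1}$ are the boundary values of the resolvent, whose existence (the uniform limiting absorption principle) is established along the way. A smooth partition of unity in $\lambda$ decomposes the evolution into a low-energy part supported near the thresholds, $\{|\lambda-m|\le\delta_0\}\cup\{|\lambda+m|\le\delta_0\}$, and a high-energy part supported in $\{|\lambda|\ge m+\delta_0\}$. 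On the low-energy support $\la\lambda\ra\sim1$, so the operators $\la H\ra^{-\f12-}$ and $\la H\ra^{-1-}$ are irrelevant there and are only needed to render the high-energy $\lambda$-integral convergent. On both pieces the time decay comes from an oscillatory integral in the Schr\"odinger-type variable $z=\sqrt{\lambda^2-m^2}$, under which the thresholds collapse to $z=0$, the phase $\lambda=\pm\sqrt{z^2+m^2}$ has the single nondegenerate critical point $z=0$ (responsible for the $|t|^{-\f12}$ rate), and the free resolvent $\cR_0^\pm(\lambda)=(D_m+\lambda)R_0(z^2)$ acquires its explicit rank-one $z^{-1}$-singular part (from the matrix factor $\beta m+\lambda\to m(\beta+\mathbbm 1)$) together with a bounded oscillatory part carrying $e^{\pm iz|x-y|}$ and $\alpha\,\sgn(x-y)$.

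For the low-energy part I would run the symmetric resolvent identity: with $V=v\,w$, $v=|V|^{\f12}$, and $M^\pm(z)=\mathbbm 1+w\cR_0^\pm(\lambda)v$,
\[
\cR_V^\pm(\lambda)=\cR_0^\pm(\lambda)-\cR_0^\pm(\lambda)\,v\,M^\pm(z)^{-1}\,w\,\cR_0^\pm(\lambda),
\]
and invert $M^\pm(z)$ near $z=0$ by the Jensen--Nenciu-type lemma of Section~\ref{sec:spec}, whose hypothesis is precisely the regularity of the thresholds. The payoff of regularity is that the $z^{-1}$ singular contributions of the three free resolvent factors cancel inside $\cR_V^\pm$, so that near $z=0$ the perturbed resolvent is bounded, of the schematic form $\cR_V^\pm(\lambda)(x,y)=\Gamma_0(x,y)+z\,\Gamma_1^\pm(x,y)+\cdots$ with $\Gamma_0$ independent of the sign; hence the leading term cancels in the resolvent difference and the low-energy kernel is a \emph{bounded} amplitude times the oscillation $e^{\pm iz|x-y|}$. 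Feeding this into the Stone formula and changing variables to $z$, the relevant phase is $-t\lambda(z)\pm z|x-y|$ with Jacobian $z/\sqrt{z^2+m^2}$; a stationary phase / van der Corput analysis (a critical point, when it exists, occurs where $z/\sqrt{z^2+m^2}=|x-y|/t$, which forces the Jacobian factor there to equal $|x-y|/t$) shows that the effective amplitude at the critical point is $\les\min(|x-y|/t,1)$. This already gives the unweighted $|t|^{-\f12}$ bound of part (i); bounding $\min(|x-y|/t,1)\le(|x-y|/t)^\tau$ and using $|x-y|^\tau\les\la x\ra^\tau\la y\ra^\tau$ upgrades it to $|t|^{-\f12-\tau}$ from $L^{1,\tau}$ to $L^{\infty,-\tau}$ for $0\le\tau\le1$, which is part (ii) at low energy. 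The hypotheses $\delta>3$, $|\D_xV|\les\la x\ra^{-1-}$ (resp.\ $\delta>5$, $|\D_xV|\les\la x\ra^{-2-}$) are exactly what guarantee that the kernels conjugated by $v,w$ and their first (resp.\ second) $z$-derivatives lie in the function spaces required for the van der Corput estimates, and, in the weighted case, absorb the weights $\la\cdot\ra^{\pm\tau}$ for $\tau$ up to $1$.

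For the high-energy part, the naive Born series $\cR_V=\sum_k(-1)^k\cR_0(V\cR_0)^k$ is useless: each factor $\cR_0$ retains the $\alpha\,\sgn(x-y)$ piece, which does not decay in $z$, so convergence would demand many derivatives and strong decay of $V$ — the first-order structure $i\alpha\D_x$ of $D_m$ plays here the role of a critical magnetic term. Instead I would import the high-energy argument devised for Kato smoothing estimates of magnetic Schr\"odinger operators: exploit the $e^{\pm iz|x-y|}$ oscillation and local decay to obtain a bound for $M^\pm(z)^{-1}$ that is uniform in large $z$ up to a loss $\la z\ra^{\f12+}$ (equivalently a uniform limiting absorption principle at high energy), combine it with a single integration by parts in $x$ (legitimate by $|\D_xV|\les\la x\ra^{-1-}$, respectively $\la x\ra^{-2-}$ when weights are present), and note that the smoothing factor $\la H\ra^{-\f12-}\sim\la z\ra^{-\f12-}$ on this range cancels exactly the $\la z\ra^{\f12+}$ loss, so the $\lambda$-integral converges and produces $|t|^{-\f12}$ (and $|t|^{-\f12-\tau}$ with the weights, via the same $\min(|x-y|/t,1)$ device). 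The second bound in part (i), with the stronger $\la H\ra^{-1-}$ and $\la t\ra^{-\f12}$, then follows because the extra half power makes the high-energy $\lambda$-integral absolutely convergent, hence bounded from $L^1$ to $L^\infty$ also for $|t|\le1$.

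The step I expect to be the main obstacle is the high-energy estimate under near-optimal smoothness: adapting a technique built to yield $L^2$-based smoothing bounds so that it produces a genuine $L^1\to L^\infty$ dispersive estimate, while keeping the derivative count on $V$ down to essentially one power and controlling the first-order operator $i\alpha\D_x$ uniformly in the energy and at both spatial endpoints of the kernel. A secondary difficulty is the bookkeeping in the low-energy regular-case expansion — verifying that the $z^{-1}$ terms of the free resolvent genuinely cancel inside $\cR_V^\pm$, controlling the two $z$-derivatives of what remains near $z=0$, and matching the resulting $\min(|x-y|/t,1)$ bound against the spatial weights so that $\tau=1$ indeed delivers the $|t|^{-3/2}$ rate.
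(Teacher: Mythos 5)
Your low-energy outline is essentially the paper's: Stone's formula in the variable $z=\sqrt{\lambda^2-m^2}$, the symmetric resolvent identity, a Feshbach/Jensen--Nenciu inversion of $M^\pm(z)$ near $z=0$ under the regularity hypothesis, and van der Corput bounds in $z$. One caveat: the heart of the weighted $|t|^{-\f32}$ bound is not a soft "bounded amplitude at the critical point" observation but a delicate algebraic cancellation — the free resolvent's contribution $F^0_t$ (which decays only like $|t|^{-\f12}$ even with weights) is cancelled exactly by the contribution of the leading term $c_PzP$ of $M^{-1}(z)$, while all remaining terms are handled by exploiting the orthogonality $Qv(\beta+I)=0$ through differences of exponentials (which is also where the weights $\la x\ra\la y\ra$ and the decay $\delta>5$ enter). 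You flag this verification as a difficulty but do not carry it out; since without it the low-energy evolution does not satisfy the $\tau=1$ bound, it is the essential content of part (ii) at low energy, not bookkeeping.

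The genuine gap is at high energy. First, the accounting is wrong: the high-energy limiting absorption principle for this operator is uniform for $\sigma>\f12$ with no $\la z\ra^{\f12+}$ loss, and the factor $\la H\ra^{-\f12-}$ is not spent cancelling a resolvent-norm loss — it is needed to sum over dyadic pieces the factor $2^{j/2}$ produced by van der Corput, which reflects the degenerating curvature of the phase $\sqrt{z^2+m^2}$ at large $z$. Second, no mechanism for pointwise $t$-decay is actually given: "the $\lambda$-integral converges" yields only uniform boundedness, and an $L^2$-based bound on $M^{-1}(z)$ plus "a single integration by parts in $x$" does not produce an $L^1\to L^\infty$ kernel estimate. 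The paper's argument works term by term in the iterated Born series: each dyadic piece is estimated pointwise via Lemma~\ref{lem:vdc}, and summability of the infinite series (despite the non-decay of $\mR_0$ in $z$) is obtained by splitting $\mR_0=\mR_d^1+\mR_d^2+\mR_d^3$, separating directed from undirected products, using the geometric smallness $\delta^M$ of directed blocks and an integration by parts in an internal variable $u_0$ (transferring the derivative to $V$ or to the cutoffs, hence the hypothesis on $\partial_xV$) to gain $2^{-j}$ from each undirected block; none of this is present in your sketch beyond a pointer to the magnetic-Schr\"odinger literature. Third, your decomposition lumps the intermediate energies $\{|z|\approx 1\}$ into the "high-energy" piece, but the series argument only converges for $|z|$ large; the paper treats this range separately with a finite resolvent expansion and the derivative bounds $\|\partial_z^k\mR_V(z)\|_{L^{2,\sigma}\to L^{2,-\sigma}}$ (which is where $\delta>3$, resp. $\delta>5$ for $k=2$ in the weighted case, is really used), and this regime is not addressed in your proposal at all.
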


We note that the continuity and differentiability of the potential is required only for the high energy argument. We note that  the $|t|^{-\f12}$ dispersive decay bounds for the free Dirac and Klein-Gordon equations require three halves derivative loss.  This suggests that the bounds stated above are essentially sharp with respect to the required differentiability, given by the negative powers of $\la H\ra$, of the initial data.    We also note that the weighted bounds are integrable in time when $\tau>\f12$.  The high energy contribution to the dispersive bound provides some difficulties.  Namely, both free and the perturbed resolvent operators and their derivatives don't decay in the spectral variable $z$ as $|z|\to \infty$.  This limits the bootstrapping argument of Agmon, \cite{agmon}, to compact subsets of the continuous spectrum.  This typically requires one to assume more smoothness on the initial data to counteract this lack of decay and close the large energy argument.  To overcome this obstacle and prove sharp estimates, we adapt the high energy method used in \cite{EGS1,EGS2,EGG} to the case of the one dimensional Dirac operator. This method has never been used before to obtain $L^1\to L^\infty$ dispersive estimates.  In addition to proving the dispersive bounds, this allows us to prove a uniform limiting absorption principle, establish a family of Strichartz estimates and establish results about the spectrum of one dimensional Dirac operators, see Theorem~\ref{cor:uniform LAP}, and Corollaries~\ref{cor:Strichartz} and \ref{cor:eval lack} below.

We also provide a classification of the effect of threshold resonances on the dynamics of the solution.  The existence of threshold resonances only effects the low energy evolution, the high energy bounds are unaffected.  Namely, with $\chi$ a smooth cut-off to a sufficiently small neighborhood of the threshold, and for $\C^2$-valued $f,g$ denoting $\la f,g\ra= \int f(x)g^*(x)\, dx$, we have the following low energy bounds.

\begin{theorem}\label{thm:main nonreg}
	
	Assume that $V$ is self adjoint and  $|V (x)|\les \la x\ra^{-\delta}$.  If one or both of the threshold energies $\pm m$ are not regular, then
	\begin{enumerate}[i)]
		\item If $\delta>5$,  we have the low energy dispersive bound
		$$
		\|e^{-itH}P_{ac}(H)\chi(H) \|_{L^1\to L^\infty} \les \la t\ra^{-\f12}.
		$$
		\item If $\delta>9$,  then for $|t|>1$ there exists an operator $F_t$ of rank at most two (one for each threshold) satisfying 
		$\|  F_t\|_{L^1\to L^\infty}\les | t|^{-\f12}$, 
		so that 
		$$
		\|e^{-itH}P_{ac}(H)\chi(H)- F_t\|_{L^{1,\tau}\to L^{\infty,-\tau}} \les |t|^{-\f12-\tau},\;\;\;0\leq \tau\leq 1.
		$$
		\item Furthermore, for $\delta>9$ and $|t|>1$, with stronger weights we can express the operator $F_t$ from the previous statement as a rank two projection to canonical resonance functions (one for each threshold) for $f\in L^1$ as follows: 
		$$
		F_tf=t^{-\frac12} \big( c_+e^{-imt} \psi_+ \la f, \psi_+ \ra +c_- e^{imt} \psi_- \la f, \psi_-  \ra \big) +O(|t|^{-\f12-\tau} \la x\ra^{2\tau} \| f \|_{L^{1,2\tau}}),
		$$
		where $\psi_\pm\in L^\infty$ are the canonical resonance functions, that is distributional solutions to $H\psi_{\pm}=\pm m\psi_\pm$, and the constants $c_\pm$ can be computed explicitly; for $c_+$, see Proposition~\ref{prop:Ft real}.
		
	\end{enumerate}
	
\end{theorem}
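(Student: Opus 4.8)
The plan is to combine Stone's formula with the resonant threshold expansion of the resolvent from Section~\ref{sec:spec} and a stationary phase argument, upgrading the bound of part~ii) by extracting its leading term explicitly. We begin from
$$
e^{-itH}P_{ac}(H)\chi(H) = \frac{1}{2\pi i}\int_{|\mu|\ge m} e^{-it\mu}\,\chi(\mu)\,\big[\mathcal{R}_V^+(\mu)-\mathcal{R}_V^-(\mu)\big]\,d\mu,
$$
where $\mathcal{R}_V^\pm(\mu)=\lim_{\eps\to0^+}(H-(\mu\pm i\eps))^{-1}$. For $\mu$ bounded away from $\pm m$ but still in the support of $\chi$, the phase has no critical point and, since $\delta>9$, the perturbed resolvent is several times differentiable in $\mu$; integration by parts gives a contribution of size $|t|^{-N}$ for $N$ as large as this regularity permits, which is harmless. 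Everything thus reduces to the two pieces supported near $\mu=\pm m$. Near $+m$ I substitute $\mu=\sqrt{\lambda^2+m^2}$, so that $\lambda\to0^+$ corresponds to the threshold, $d\mu=\tfrac{\lambda}{\sqrt{\lambda^2+m^2}}\,d\lambda$, and the phase $e^{-it\sqrt{\lambda^2+m^2}}$ has a nondegenerate critical point at $\lambda=0$ with second derivative $1/m$; equivalently, the spectral density has a $(\mu-m)^{-1/2}$ singularity there. This is the mechanism producing the $|t|^{-1/2}$ rate. The neighborhood of $-m$ is handled identically, with $\mu=-\sqrt{\lambda^2+m^2}$ producing the phase $e^{+imt}$ and the resonance function $\psi_-$.

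Into the threshold piece I would insert the low-energy resolvent expansion. Using the symmetric resolvent identity $\mathcal{R}_V^\pm=\mathcal{R}_0^\pm-\mathcal{R}_0^\pm\,v\,(M^\pm(\lambda))^{-1}\,v\,\mathcal{R}_0^\pm$ with $v=|V|^{1/2}$ and $M^\pm(\lambda)=U+v\,\mathcal{R}_0^\pm(\sqrt{\lambda^2+m^2})\,v$, a resonance at $+m$ means $M^\pm(0)$ fails to be invertible on the relevant subspace, and the Jensen--Nenciu/Feshbach inversion of $M^\pm(\lambda)$ near $\lambda=0$ --- after the cancellations particular to one dimension --- produces
$$
\mathcal{R}_V^\pm(\sqrt{\lambda^2+m^2}) = \frac{1}{\lambda}\,\Gamma_{-1} + \Gamma_0^\pm(\lambda),
$$
where $\Gamma_{-1}$ is a $\lambda$-independent rank one operator proportional to $f\mapsto\psi_+\langle f,\psi_+\rangle$, with $\psi_+$ the canonical (suitably normalized) $L^\infty$ solution of $H\psi_+=m\psi_+$ identified in Section~\ref{sec:spec}, and $\Gamma_0^\pm(\lambda)$ collects the remaining terms. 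The hypothesis $\delta>9$ enters here: it lets the expansion be carried far enough that $\Gamma_0^\pm$, together with the $\lambda$-increments needed below, is H\"older continuous in $\lambda$ of the required order while acting boundedly between the weighted $L^{1}$--$L^{\infty}$ spaces with a controlled loss of powers of $\langle x\rangle\langle y\rangle$. After multiplying by the Jacobian $\tfrac{\lambda}{\sqrt{\lambda^2+m^2}}$ the singular factor $\tfrac1\lambda$ becomes $\tfrac1{\sqrt{\lambda^2+m^2}}$, so the full amplitude is bounded and H\"older near $\lambda=0$.

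Next I would apply an oscillatory integral lemma tailored to the phase $\sqrt{\lambda^2+m^2}$: for an amplitude $a$ supported near $0$ with enough regularity,
$$
\int e^{-it\sqrt{\lambda^2+m^2}}\,a(\lambda)\,d\lambda = c\,t^{-1/2}\,e^{-imt}\,a(0) + O\big(t^{-1/2-\tau}\,\|a\|\big),
$$
the remainder being controlled by a weighted H\"older norm of $a$ of order $2\tau$, where each $\lambda$-derivative hitting a free resolvent kernel costs a factor $|x-y|\le\langle x\rangle\langle y\rangle$. The leading term extracts $a(0)$, which by the previous step is a fixed multiple of $\Gamma_{-1}$; summing the two thresholds and collecting the constants (the Gamma-function factor from the oscillatory integral, the Jacobian value $1/m$, and the normalization of $\Gamma_{-1}$) gives precisely
$$
F_tf = t^{-1/2}\big(c_+e^{-imt}\psi_+\langle f,\psi_+\rangle + c_-e^{imt}\psi_-\langle f,\psi_-\rangle\big),
$$
with $c_+$ pinned down by tracing these constants, as recorded in Proposition~\ref{prop:Ft real}. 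The remainder, coming from $\Gamma_0^\pm$ and from the oscillatory error applied to $\Gamma_{-1}$, is the asserted $O(|t|^{-1/2-\tau}\langle x\rangle^{2\tau}\|f\|_{L^{1,2\tau}})$: the two factors $\langle\cdot\rangle^{2\tau}$ are the cost of converting $2\tau$ units of $\lambda$-differentiation of the resolvent kernels into the extra $t^{-\tau}$ of decay, one paired with $f$ to give the $L^{1,2\tau}$ norm and the other appearing as the output weight. Finally one checks that this explicit rank two operator agrees with the $F_t$ of part~ii) up to an error of the same order --- automatic, since the discrepancy lies among the terms already shown to be negligible.

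The main obstacle is the bookkeeping in the last two steps: one must push the threshold expansion of $\mathcal{R}_V^\pm$ to high enough order that \emph{every} term other than the single rank one operator per threshold is genuinely $O(|t|^{-1/2-\tau})$ in the weighted operator norm, while tracking precisely how many powers of $\langle x\rangle$ and $\langle y\rangle$ each such term costs. This double demand --- enough $\lambda$-regularity for the oscillatory gain \emph{and} enough spatial decay to absorb the weights generated by differentiating the phase and the free Dirac resolvent kernels --- is exactly why the decay hypothesis must be strengthened to $\delta>9$. A secondary, more algebraic point is the identification of the operator-theoretic residue $\Gamma_{-1}$ with the rank one projection onto the canonical $L^\infty$ resonance function $\psi_+$ (rather than some abstract vector in a weighted $L^2$ space) and the explicit evaluation of the constants $c_\pm$.
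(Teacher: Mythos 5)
Your outline follows the same skeleton as the paper (Stone's formula, symmetric resolvent identity with a Feshbach/Jensen--Nenciu inversion of $M^\pm(z)$ near the threshold, stationary phase extracting a $t^{-1/2}e^{\mp imt}$ leading term, weighted errors), but the step you present as an input --- that ``after the cancellations particular to one dimension'' the inversion produces $\mR_V^\pm=\lambda^{-1}\Gamma_{-1}+\Gamma_0^\pm$ with $\Gamma_{-1}$ a $\lambda$-independent rank-one operator proportional to $\psi_+\la\cdot,\psi_+\ra$ --- is precisely the content of the theorem, and it is not established by your argument. Concretely: (a) in the non-regular case the invertibility of $M(z)$ for small $z\neq 0$ is not automatic; it hinges on the non-vanishing of $|\kappa_0|^2-\frac{im}{2}\,\mathrm{Tr}(S_1M_1S_1)$ (the paper's Lemma~\ref{prop:B1 inv}), which in turn uses the structure of the resonance function; your proposal never addresses why the Feshbach block is invertible. (b) Naively inserting $M^{-1}(z)\sim c_{-1}z^{-1}S_1$ between two copies of $\mR_0(z)\sim\frac{im}{2z}(\beta+I)e^{iz|\cdot|}$ gives an apparent $z^{-3}$ singularity, not $z^{-1}$; the collapse to a single $1/z$ requires the orthogonality $S_1\leq Q$, $\int(\beta+I)v^*S_1=0$ used on both sides (via $e^{iz|x-x_1|}-e^{iz|x|}=iz\int e^{izs}ds$, which is also where the extra weights come from), together with the exact cancellation of the free resolvent against the $c_PzP$ piece of $M^{-1}$. (c) The residue is not produced by the $S_1$ block alone: the leading rank-one kernel only emerges after combining the $S_1$, $S_1TP$, $PTS_1$ and $P$ contributions, and its identification with $\psi_+\otimes\psi_+^*$ uses $\psi=-G_0v^*\phi+\kappa_0(1,1)^T$ (Lemma~\ref{lem:S1 char}); moreover the ``rank at most two'' count needs the one-dimensionality of $S_1L^2$ (Corollary~\ref{cor:S1}). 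None of these points appear in the proposal, and part~(i) (the unweighted $\la t\ra^{-1/2}$ bound under $\delta>5$, which needs the weaker expansion with the $L^1_z$ bounds on $z\Lambda_j$) is essentially untouched.

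A second, more specific problem is your last step, where you claim the explicit projection agrees with the $F_t$ of part~(ii) ``automatically, since the discrepancy lies among the terms already shown to be negligible.'' At the part~(ii) weights ($L^{1,\tau}\to L^{\infty,-\tau}$, $\tau\leq 1$) this is not true: the leading operator one actually obtains from the stationary-phase step is a $t$-dependent rank-one kernel built from $\chi(x/t)e^{-im\sqrt{t^2-x^2}}$ and $H_t$-type profiles, and it reduces to $t^{-1/2}e^{-imt}\psi_+\otimes\psi_+^*$ only after Taylor expansions costing errors of size $\la x\ra^2/t$ per variable --- i.e.\ exactly the stronger $\la x\ra^{2\tau}$, $L^{1,2\tau}$ weights of part~(iii). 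If you take the pure resonance projection as $F_t$ in part~(ii), the difference is not controlled at weight $\tau$; the paper keeps the $t$-dependent rank-one operator for part~(ii) and passes to the canonical projection only in part~(iii). (A minor further point: since $\chi(H)$ already localizes to a small neighborhood of $\pm m$, there is no ``away from threshold'' region to integrate by parts in, and in the $\mu$ variable the phase $e^{-it\mu}$ has no critical points anywhere --- the decay comes from the square-root singularity of the spectral density, as you note afterwards.)
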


The effect of a threshold resonance is to produce a slower decaying portion of the evolution that is rank at most one for a resonance at the positive  ($\lambda=m$)  or negative ($\lambda=-m$) thresholds respectively.  This theorem may be combined with the high energy argument in Section~\ref{sec:high} to provide a dispersive bound without the low energy cut-off as the effect of a threshold obstruction only affects an arbitrarily small neighborhood of $\lambda=m$ ($\lambda=-m$ respectively).  The rank at most two operator we construct, as noted in the final statement of Theorem~\ref{thm:main nonreg} is a time-dependent scalar function multiplying the projection onto the one dimensional space of resonances at each threshold, at the cost of greater spatial weights.  See Propositions~\ref{prop:nonreg} and \ref{prop:Ft real} below for a construction of this operator.

The decay requirement on the potential for these results is not necessarily optimal.  The different values of $\delta$ in Theorems~\ref{thm:main reg} and \ref{thm:main nonreg} are required to develop appropriate expansions for the resolvent around the threshold energies. 


As in the multi-dimensional results, \cite{egd,EGT2d,EGT,EGG}, our techniques also provide useful insights on the spectral theory of the perturbed operator.  Our resolvent expansions obtained below also show that $\mR_V(\lambda)=(H-\lambda)^{-1}$ is a uniformly bounded operator between weighted $L^2$ spaces in a neighborhood of $\lambda =\pm m$ if the thresholds are regular.\footnote{In the case of a resonance, using our bounds one may conclude that $(\lambda \mp m)\mR_V(\lambda)$ is uniformly bounded in a neighborhood of the threshold. } This implies a limiting absorption principle bound in a neighborhood of each threshold, as well as showing the absence of eigenvalues in these neighborhoods.  As a consequence, there are only finitely many eigenvalues in the spectral gap $(-m,m)$.  Combining this with the high energy argument, see Lemma~\ref{lem:LAP} below, we obtain a uniform limiting absorption principle over the spectrum, namely
\begin{theorem}\label{cor:uniform LAP}
	
	Under the assumption that the threshold energies are regular, and if $V$ has continuous entries satisfying  $|V(x)| \les \la x\ra^{-3-}$, we have the uniform resolvent bounds:
	$$
	\sup_{|\lambda| >m} \ 
	\| \la x\ra^{-\sigma}   (H - (\lambda + i0))^{-1}
	\la x\ra^{-\sigma}\|_{2\to2} \les 1, \quad \text{provided} \quad \sigma>1. 
	$$
	
\end{theorem}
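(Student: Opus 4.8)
The strategy is to glue together a low-energy resolvent bound, a mid-energy (compact spectral window) bound, and a high-energy bound, covering the whole set $\{|\lambda|>m\}$. For the low energy, that is $\lambda$ in a fixed neighborhood of $\pm m$, the regularity of the thresholds is precisely the statement that $\mR_V(\lambda+i0)$ extends to a bounded operator between weighted $L^2$ spaces $L^{2,\sigma}\to L^{2,-\sigma}$, $\sigma>\frac12$, uniformly as $\lambda\to\pm m$; this follows from the symmetric resolvent identity $\mR_V = \mR_0 - \mR_0 V \mR_V$ together with the invertibility on weighted $L^2$ of the operator $I + V\mR_0(\lambda+i0)$ near the threshold, which is exactly what ``regular'' buys us via the Neumann/Fredholm analysis in Section~\ref{sec:spec}. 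For a fixed compact window $m\le a\le |\lambda|\le b<\infty$ away from the thresholds, the free resolvent $\mR_0(\lambda+i0)$ is bounded $L^{2,\sigma}\to L^{2,-\sigma}$ for $\sigma>\frac12$ and is continuous in $\lambda$ by the resolvent identity \eqref{eq:resolv id} and the corresponding classical bounds for $R_0(z)=(-\Delta-z)^{-1}$; the absence of eigenvalues in $(m,\infty)$ and $(-\infty,-m)$ (by \cite{BC1}) plus the limiting absorption principle for $D_m$ makes $I+V\mR_0(\lambda+i0)$ invertible for each such $\lambda$, and compactness of the window upgrades this to a uniform bound.

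The genuinely new ingredient is the high-energy bound, $\sup_{|\lambda|>b}$ for $b$ large. Here I would invoke the high-energy machinery of Section~\ref{sec:high}, specifically Lemma~\ref{lem:LAP}, which the excerpt promises delivers exactly the uniform resolvent estimate for large $|\lambda|$ under the hypothesis $|V(x)|\les\la x\ra^{-3-}$ with continuous entries. The point is that neither $\mR_0(\lambda)$ nor its derivatives decay as $|\lambda|\to\infty$, so the naive Agmon iteration fails; instead one uses the variant of the magnetic-Schr\"odinger Kato-smoothing argument adapted here, in which one factors the potential as $V = V_1 V_2$ with $|V_j|\les\la x\ra^{-3/2-}$ and controls $V_2\mR_0(\lambda+i0)V_1$ as a bounded operator on $L^2$ with norm $o(1)$ (or at least $<1$ after possibly passing to the resolvent identity iterated once), uniformly in $\lambda$ with $|\lambda|>b$. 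Then $I+V_2\mR_0(\lambda+i0)V_1$ is invertible by Neumann series, and the symmetric resolvent identity
\begin{equation*}
\la x\ra^{-\sigma}\mR_V(\lambda+i0)\la x\ra^{-\sigma} = \la x\ra^{-\sigma}\mR_0(\lambda+i0)\la x\ra^{-\sigma} - \la x\ra^{-\sigma}\mR_0(\lambda+i0)V_1\big(I+V_2\mR_0(\lambda+i0)V_1\big)^{-1}V_2\mR_0(\lambda+i0)\la x\ra^{-\sigma}
\end{equation*}
is bounded $L^2\to L^2$ uniformly in $|\lambda|>b$, using at the end that $\la x\ra^{-\sigma}\mR_0(\lambda+i0)\la x\ra^{-\sigma}$ and $\la x\ra^{-\sigma}\mR_0(\lambda+i0)V_1$, $V_2\mR_0(\lambda+i0)\la x\ra^{-\sigma}$ are all uniformly bounded for $\sigma>1$ (this is where the stronger weight $\sigma>1$, rather than $\sigma>\frac12$, enters — one needs to absorb the lack of decay of the free resolvent, and $\la x\ra^{-3-}$ worth of potential decay matches $\la x\ra^{-3/2-}$ on each factor).

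Finally I would assemble the three regimes: choose $b$ large enough for the high-energy step, choose $\eps>0$ small enough that the low-energy step covers $m\le|\lambda|<m+\eps$, and note $[m+\eps, b]$ (and its negative-axis mirror) is the compact window. On the overlaps the three bounds are compatible since each controls the same quantity $\|\la x\ra^{-\sigma}(H-(\lambda+i0))^{-1}\la x\ra^{-\sigma}\|_{2\to2}$, so the supremum over $|\lambda|>m$ is finite. The main obstacle is genuinely the high-energy uniformity: verifying that the adapted Kato-smoothing argument yields operator-norm (not merely Kato-smoothing $L^2_t L^2_x$) control of $V_2\mR_0(\lambda+i0)V_1$ that is uniform as $|\lambda|\to\infty$, and that the weight loss is no worse than $\sigma>1$. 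Everything else — the low- and mid-energy pieces — is the standard Agmon/Kato--Jensen symmetric-resolvent bootstrap, using the threshold regularity and the known absence of embedded eigenvalues.
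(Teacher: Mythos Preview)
Your three-regime strategy is correct, and invoking Lemma~\ref{lem:LAP} for large $|\lambda|$ together with \cite{GM} for compact windows away from threshold matches the paper. But two points are off.

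First, you have the location of the $\sigma>1$ requirement reversed. The high-energy bound (Lemma~\ref{lem:LAP}) holds already for $\sigma>\tfrac12$; it is the \emph{low}-energy bound (Lemma~\ref{lem:LAPlow}) that forces $\sigma>1$. The free resolvent $\mR_0(z)$ carries the singular piece $\tfrac{im}{2z}(\beta+I)e^{iz|x-y|}$ as $z\to0$, so $\mR_0$ alone is not uniformly bounded $L^{2,\sigma}\to L^{2,-\sigma}$ near threshold for any finite $\sigma$. Regularity is \emph{not} defined as boundedness of $\mR_V$ for $\sigma>\tfrac12$; it is the invertibility of $QTQ$ (Definition~\ref{def:regular}), which via Proposition~\ref{Minversenew} gives $M^{-1}(z)=c_PzP+\cdots$. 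Inserting this into the symmetric resolvent identity, the paper shows that the $1/z$ singularity of the free term is exactly cancelled by the leading $c_PzP$ Born contribution (using \eqref{eqn:S P int}), leaving a remainder whose kernel is $O(\min(\la x\ra,\la y\ra))$; Schur's test on that kernel is what requires $\sigma>1$. Your sketch (``invertibility of $I+V\mR_0$ near threshold'') does not see this cancellation and would not close.

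Second, your high-energy mechanism --- showing $\|V_2\mR_0(\lambda)V_1\|_{2\to2}=o(1)$ and summing a Neumann series --- does not work as stated, because $|\mR_0(z)(x,y)|$ is of order $1$ uniformly in large $z$, so $V_2\mR_0 V_1$ has no reason to be small. The actual argument (following \cite{EGS1,EGS2,EGG}) splits $\mR_0=\mR_d^1+\mR_d^2+\mR_d^3$ according to the sign and size of $x-y$, and in the iterated expansion distinguishes \emph{directed} $M$-blocks (made small by taking the cutoff scale $d$ small, Lemma~\ref{lem:directedprod}) from \emph{undirected} blocks (which contain a phase reversal $\mR_d^1\cdots\mR_d^2$ and gain a factor $z^{-1}$ after an integration by parts). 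The smallness enabling series convergence comes from this combinatorial/oscillatory mechanism, not from a bare operator-norm bound on $V_2\mR_0 V_1$.
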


There are several immediate consequences of this result:
\begin{corollary}\label{cor:Strichartz}
	Let  $V$ be a  self-adjoint matrix, with continuous entries satisfying  
	$|V(x)|  \les  \la x\ra^{-3- } $. If the threshold energies are regular, we have 
	\begin{equation}\label{eq:strichmassive} 
	\|\la \nabla\ra^{-\theta} e^{-itH} P_{ac}(H) f\|_{L_t^p(L_x^q)}
	\les \|f\|_{L^2(\R)} 
	\end{equation}
	provided that $2\leq q,r\leq \infty$ with
	\begin{equation*}
	\theta \geq \frac12 + \frac1p - \frac1q \quad \text{and} \quad 
	\frac{2}{p}+\frac{1}{q}=\frac{1}{2}.
	\end{equation*}
	
\end{corollary}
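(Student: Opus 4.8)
The plan is to derive \eqref{eq:strichmassive} from the abstract $TT^*$ machinery, fed by the dispersive decay of Theorem~\ref{thm:main reg} together with $L^2$-conservation. The two inputs are: (a) the unitarity bound $\|e^{-itH}P_{ac}(H)\|_{L^2\to L^2}\les 1$, immediate since $H=D_m+V$ is self-adjoint (as $V$ is self-adjoint and bounded), so $e^{-itH}$ is unitary and $P_{ac}(H)$ is an orthogonal projection commuting with it; and (b) the dispersive decay $\|e^{-itH}P_{ac}(H)\la H\ra^{-\frac12-}\|_{L^1\to L^\infty}\les |t|^{-\frac12}$, which holds under the hypotheses imposed here (these coincide with those of Theorem~\ref{cor:uniform LAP}, the high-energy regime being controlled by the same high-energy argument that underlies that result). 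The only feature that is not completely classical is the half-derivative loss $\la H\ra^{-\frac12-}$, forced by the non-decay of the free and perturbed resolvents as $|z|\to\infty$; this loss has to be tracked through the argument and, at the end, transferred from an $H$-adapted to a flat Sobolev smoothing.

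First I interpolate (a) and (b). Since $\la H\ra^{-s}$ is a Borel function of $H$, it commutes with $e^{-itH}$ and with $P_{ac}(H)$; complex interpolation of the two bounds gives, for $2\le q\le\infty$,
$$
	\big\| e^{-iuH}P_{ac}(H)\la H\ra^{-(\frac12-\frac1q)-}\big\|_{L^{q'}\to L^q}\les |u|^{-(\frac12-\frac1q)}.
$$
Setting $U(t):=e^{-itH}P_{ac}(H)\la H\ra^{-\frac1p-}$, the commutation gives $U(t)U(s)^*=e^{-i(t-s)H}P_{ac}(H)\la H\ra^{-\frac2p-}$; since admissibility forces $\frac2p=\frac12-\frac1q$, this is bounded $L^{q'}\to L^q$ with norm $\les |t-s|^{-\frac2p}$, while $\|U(t)\|_{L^2\to L^2}\les1$. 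Estimating the $TT^*$ double time integral by Hardy--Littlewood--Sobolev with exponent $\lambda=\frac2p$ (which satisfies $0<\lambda\le\frac12<1$, so all the relevant exponents remain in the interior and no Keel--Tao endpoint arises) yields $\|U(t)f\|_{L^p_t(L^q_x)}\les\|f\|_{L^2}$, that is $\|\la H\ra^{-\frac1p-}e^{-itH}P_{ac}(H)f\|_{L^p_t(L^q_x)}\les\|f\|_{L^2}$, for every admissible pair with $p<\infty$; the case $p=\infty$ (so $q=2$) is immediate from (a).

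It remains to pass from the $H$-adapted smoothing to the flat one. Factor
$$
	\la\nabla\ra^{-\theta}e^{-itH}P_{ac}(H)f=\big(\la\nabla\ra^{-\theta}\la H\ra^{\frac1p+}\big)\,\big(e^{-itH}P_{ac}(H)\la H\ra^{-\frac1p-}f\big),
$$
so by the previous step it suffices to show that $\la\nabla\ra^{-\theta}\la H\ra^{\frac1p+}$ is bounded on $L^q_x$. Since $H^2=-\Delta+m^2+\big(\alpha V'+i(\alpha V+V\alpha)\partial_x+m(\beta V+V\beta)\big)+V^2$ differs from $-\Delta+m^2$ only by a first-order operator with bounded, decaying coefficients, the regularity of the thresholds---via the uniform resolvent bounds of Theorem~\ref{cor:uniform LAP}---gives the equivalence of the $H$- and flat Sobolev scales on $L^q_x$, and then $\la\nabla\ra^{-\theta}\la H\ra^{\frac1p+}$ is an operator of order $\frac1p-\theta$. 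When $\theta\ge\frac12+\frac1p-\frac1q=\frac3p$ (by the admissibility relation) this order is $\le-\frac2p<0$, strictly negative, so the operator is bounded on $L^q_x$ for every $q\in[2,\infty]$, including the endpoint $q=\infty$ (that is $p=4$), where a mere order-zero operator need not preserve $L^\infty$; away from $q=\infty$ a smaller power of smoothing already suffices. This establishes \eqref{eq:strichmassive}. The one genuinely delicate step is this last transfer at the $L^\infty_x$ endpoint: the slightly generous threshold $\theta\ge\frac12+\frac1p-\frac1q$ is dictated precisely by the requirement that the conversion be harmless there, while every other ingredient is the standard $TT^*$ argument.
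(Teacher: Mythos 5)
Your plan has two genuine gaps, one of hypotheses and one of substance. First, your dispersive input (b) is Theorem~\ref{thm:main reg}(i), which requires $|\partial_x V(x)|\les \la x\ra^{-1-}$ in addition to $|V(x)|\les\la x\ra^{-3-}$; Corollary~\ref{cor:Strichartz} assumes only continuity of the entries. Your parenthetical claim that the dispersive decay ``holds under the hypotheses imposed here'' is not what the paper establishes: the high-energy dispersive argument (Proposition~\ref{prop:hi energies}, Lemma~\ref{lem:hi energies}) integrates by parts in the spatial variables and puts a derivative on $V$, so differentiability of the potential is genuinely used there, whereas the high-energy limiting absorption argument does not require the $L^1\to L^\infty$ bound at all. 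At best your route proves the corollary under strictly stronger assumptions. The paper instead deduces the corollary from Theorem~\ref{cor:uniform LAP} alone: the uniform LAP gives the Kato smoothing bound $\|\la x\ra^{-\sigma}e^{-itH}f\|_{L^2_tL^2_x}\les\|f\|_{L^2}$, and the Rodnianski--Schlag perturbation argument (Duhamel against the free Dirac flow, whose Klein--Gordon-type Strichartz estimates carry exactly the loss $\theta\geq\frac12+\frac1p-\frac1q$) then yields \eqref{eq:strichmassive}; no pointwise decay of the perturbed flow, and hence no bound on $\partial_x V$, is needed.

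Second, even granting the dispersive input, your final transfer step is not justified. The boundedness of $\la\nabla\ra^{-\theta}\la H\ra^{\frac1p+}$ on $L^q_x$, in particular at $q=\infty$, is asserted via an ``equivalence of the $H$- and flat Sobolev scales on $L^q_x$'' attributed to Theorem~\ref{cor:uniform LAP}; but that theorem is a weighted $L^2\to L^2$ resolvent estimate and gives no $L^q$ ($q\neq 2$) mapping information for functions of $H$. Proving $\la\nabla\ra^{-\theta}\la H\ra^{s}:L^q\to L^q$ up to $q=\infty$ would require an $L^q$ spectral multiplier theorem for $H$ or $L^p$-boundedness of the wave operators for the one dimensional Dirac operator, neither of which appears in the paper and neither of which is standard for this operator. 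The same missing ingredient affects your interpolation step: passing from the pair $e^{-itH}P_{ac}(H)$ on $L^2\to L^2$ and $e^{-itH}P_{ac}(H)\la H\ra^{-\frac12-}$ on $L^1\to L^\infty$ to the intermediate bound requires Stein interpolation for the analytic family $\la H\ra^{-z(\frac12+)}$, hence control of the purely imaginary powers $\la H\ra^{iy}$ on $L^1$ or $L^\infty$ (or a Littlewood--Paley theory adapted to $H$), which is likewise unavailable here. So the $TT^*$ strategy could in principle give a Strichartz estimate with loss, but as written it cannot be closed with the results of the paper, and it does not reach the corollary under its stated hypotheses.
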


Finally, the argument used for high energies in Theorem~\ref{cor:uniform LAP} requires only that $|V(x)|\les \la x\ra^{-1-}$ with continuous entries, in particular it doesn't require $V$ to be self-adjoint.  In addition, $\sigma>\f12$ suffices. The operator $D_m$ is self-adjoint, $H$ has the same domain as $D_m$ and for unit functions $\eta$ in the domain the quadratic form $\la H\eta, \eta\ra$ is confined to a strip of finite width around the real axis.  
Consequently, any $\lambda\in \R$ with $|\lambda |$ sufficiently large cannot be an embedded eigenvalue or resonance.  The perturbation argument in \cite{EGG} shows that the eigenvalue-free zone
extends to a sector of the complex plane  containing a portion of the real line sufficiently far from zero energy.  

\begin{corollary}\label{cor:eval lack}
	
	Let $V$ be any matrix satisfying $|V(x)|\les \la x\ra^{-1-}$ with continuous entries.  Then, there exists a $m<\lambda_1<\infty$ and a $\delta>0$ depending on $V$, $m$ and $\sigma>\f12$ so that
	$$
	\sup_{\substack{|\lambda| > \lambda_1\\ 0 < |\gamma| < \delta |\lambda|}} 
	\| \la x\ra^{-\sigma} (H - (\lambda + i\gamma))^{-1} \la x\ra^{-\sigma}\|_{2\to 2} \les 1.
	$$
	As a result, there is a compact subset of the complex plane outside of which the spectrum of $H$ is confined to the real axis.
	
\end{corollary}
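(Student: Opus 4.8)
The plan is to bootstrap the sectorial resolvent bound from the high-energy limiting absorption principle already contained in (the proof of) Lemma~\ref{lem:LAP}, and then to read off the spectral consequence using the elementary numerical-range estimate for $H$. Since the weighted bound for a given $\sigma>\f12$ follows from the same bound for any smaller weight, we may take $\sigma$ as close to $\f12$ as we like; in particular fix $\sigma>\f12$ small enough that, writing $U(x)=\la x\ra^{-\sigma}$ and $W(x)=\la x\ra^{\sigma}V(x)$, both act as bounded multiplication operators on $L^2(\R;\C^2)$ and $|W(x)|\les\la x\ra^{-\sigma}$; this uses only $|V|\les\la\cdot\ra^{-1-}$. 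Then $V=UW$, and for every $z\notin\sigma(D_m)=(-\infty,-m]\cup[m,\infty)$, where $\mathcal R_0(z)$ is $L^2$-bounded, the symmetric resolvent identity
$$(H-z)^{-1}=\mathcal R_0(z)-\mathcal R_0(z)\,U\,\big(I+M_0(z)\big)^{-1}W\,\mathcal R_0(z),\qquad M_0(z):=W\,\mathcal R_0(z)\,U,$$
holds whenever $I+M_0(z)$ is invertible; this is purely algebraic and uses no self-adjointness of $V$. Because $\|W\mathcal R_0(z)\la x\ra^{-\sigma}\|_{2\to2}$ and $\|\la x\ra^{-\sigma}\mathcal R_0(z)U\|_{2\to2}$ are both $\les\|\la x\ra^{-\sigma}\mathcal R_0(z)\la x\ra^{-\sigma}\|_{2\to2}$, the corollary reduces to proving that, on the sector $\mathcal S:=\{\lambda+i\gamma:\ |\lambda|>\lambda_1,\ 0<|\gamma|<\delta|\lambda|\}$, one has both $\|\la x\ra^{-\sigma}\mathcal R_0(z)\la x\ra^{-\sigma}\|_{2\to2}\les1$ and $\|(I+M_0(z))^{-1}\|_{2\to2}\les1$ — the second of which, together with $L^2$-boundedness of $\mathcal R_0(z)$, also shows $\mathcal S\subset\rho(H)$.

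The high-energy input is that the argument behind Lemma~\ref{lem:LAP} — which needs only $|V|\les\la\cdot\ra^{-1-}$, continuity of the entries of $V$, and $\sigma>\f12$, and is insensitive to self-adjointness, the boundary values $\lambda\pm i0$ being treated on equal footing — produces $\lambda_1>m$ with $C_0:=\sup_{|\lambda|>\lambda_1}\|(I+M_0(\lambda\pm i0))^{-1}\|_{2\to2}<\infty$; in particular no real $\lambda$ with $|\lambda|>\lambda_1$ is an eigenvalue or resonance of $H$. To push this into $\mathcal S$ I would use the explicit kernel
$$\mathcal R_0(z)(x,y)=\frac i2\Big[-\alpha\,\sgn(x-y)+\frac{\beta m+z}{\sqrt{z^2-m^2}}\Big]e^{i\sqrt{z^2-m^2}\,|x-y|},\qquad\mathrm{Im}\sqrt{z^2-m^2}>0,$$
and the elementary facts that, on $\mathcal S$, $\sqrt{z^2-m^2}=z+O(|\lambda|^{-1})$, the bracketed matrix is $\mathbbm 1+O(|\lambda|^{-1})$, and $\mathrm{Im}\sqrt{z^2-m^2}$ has the sign of $\gamma$ and is comparable to $|\gamma|$. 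The role of the constraint $|\gamma|<\delta|\lambda|$, rather than $|\gamma|<\delta$, is exactly that the natural energy scale at frequency $\lambda$ is $|\lambda|$ itself, so that displacing $z$ off the real axis by $\delta|\lambda|$ perturbs the oscillatory kernel by $O(\delta)$ and leaves its real phase comparable to $\lambda$; this is the content of the perturbation argument in \cite{EGG}. Concretely, $z\mapsto M_0(z)$ extends continuously — analytically off the two cuts — to the closed sector, $\|\la x\ra^{-\sigma}\mathcal R_0(z)\la x\ra^{-\sigma}\|_{2\to2}\les1$ on $\mathcal S$ (for $z$ off the real axis even a Hilbert--Schmidt bound, using $\sigma>\f12$ together with the gain $e^{-|\gamma||x-y|}$; on the boundary it is the standard $\sigma>\f12$ oscillatory estimate), and, with $\lambda_1$ fixed, $\sup\{\|M_0(\lambda+i\gamma)-M_0(\lambda\pm i0)\|_{2\to2}:\lambda+i\gamma\in\mathcal S,\ \pm\gamma>0\}\to0$ as $\delta\to0$. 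Fixing $\delta$ so this supremum is below $(2C_0)^{-1}$ and writing $I+M_0(z)=(I+M_0(\lambda\pm i0))\big(I+(I+M_0(\lambda\pm i0))^{-1}(M_0(z)-M_0(\lambda\pm i0))\big)$, a Neumann series gives $\|(I+M_0(z))^{-1}\|_{2\to2}\le2C_0$ on $\mathcal S$, completing, via the first paragraph, the proof of the displayed resolvent bound and of $\mathcal S\subset\rho(H)$.

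For the final assertion, since $D_m$ is self-adjoint its numerical range is real, and as $V\in L^\infty$ the numerical range of $H=D_m+V$, and of $H^*=D_m+V^*$, lies in $\{|\mathrm{Im}\,z|\le\|V\|_{L^\infty}\}$; hence any $z$ with $|\mathrm{Im}\,z|>\|V\|_{L^\infty}$ satisfies $z\in\rho(H)$. Put $R:=\max(\lambda_1,\ \|V\|_{L^\infty}/\delta)$. If $z=\lambda+i\gamma\in\sigma(H)$ with $|\lambda|>R$, then $z\notin\mathcal S$ forces $|\gamma|\ge\delta|\lambda|>\delta R\ge\|V\|_{L^\infty}$, which is also impossible; therefore $\gamma=0$. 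Thus $\sigma(H)\cap\{|\mathrm{Re}\,z|>R\}\subset\R$, and outside the compact rectangle $\{|\mathrm{Re}\,z|\le R,\ |\mathrm{Im}\,z|\le\|V\|_{L^\infty}\}$ the spectrum of $H$ lies on the real axis.

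The step I expect to be the main obstacle is the propagation into $\mathcal S$: showing, uniformly in the high-energy parameter, that the oscillatory and weighted estimates underpinning Lemma~\ref{lem:LAP} degrade continuously as $z$ leaves the real axis inside the sector — that is, turning the heuristic ``$|\lambda|$ is the right unit'' into quantitative bounds, which is precisely what dictates a sectorial rather than a strip-shaped neighborhood. Everything else is the resolvent-identity bookkeeping of the first paragraph and the standard numerical-range inequality.
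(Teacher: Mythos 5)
The architecture of your argument (the factored resolvent identity, the weighted free–resolvent bound on the sector, and the closing numerical-range/strip argument) is fine, but the pivotal step — the one you flagged as "the main obstacle" — is not merely hard, it is false as stated. You need $\sup\{\|M_0(\lambda+i\gamma)-M_0(\lambda\pm i0)\|_{2\to2}:\ \lambda+i\gamma\in\mathcal S\}\to0$ as $\delta\to0$ with $\lambda_1$ fixed. But for every $\delta>0$ the sector contains points $z=\lambda+i$ with $\lambda>1/\delta$: the constraint $|\gamma|<\delta|\lambda|$ only narrows the angle, it does not force $|\gamma|$ to be small. At such a point, with $\zeta(z)=\sqrt{z^2-m^2}$, one has $\mathrm{Im}\,\zeta\approx 1$ while $\mathrm{Re}\,\zeta-\sqrt{\lambda^2-m^2}=O(1/\lambda)$, so the kernel of $M_0(z)-M_0(\lambda+i0)$ is essentially $W(x)\,f_0\,e^{i\mu|x-y|}\big(e^{-|x-y|}-1\big)U(y)$, of size one on $|x-y|\gtrsim 1$; and, unlike Schr\"odinger, the one dimensional Dirac free resolvent carries no $|z|^{-1}$-type prefactor that could make this small at high energy (this absence is precisely why Section~\ref{sec:high} needs the directed/undirected machinery at all). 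The oscillation does not save you either: testing against $f(y)=e_2e^{-i\mu y}\chi_{[R,2R]}(y)$ and $g(x)=e_2e^{i\mu x}\chi_{[-2R,-R]}(x)$ removes the phase $e^{i\mu(y-x)}$, the matrix factor on $x<y$ is the fixed nonzero $\f i2(\alpha+I)$, and for, say, $V(x)=\la x\ra^{-2}I$ and a fixed moderate $R$ one gets a lower bound for the difference norm that is independent of $\lambda$ and $\delta$. Hence the supremum is bounded below uniformly, no choice of $\delta$ makes the correction smaller than $(2C_0)^{-1}$, and the Neumann comparison $I+M_0(z)=(I+M_0(\lambda\pm i0))\big(I+\cdots\big)$ cannot be closed. (The branch bookkeeping — "$\mathrm{Im}\sqrt{z^2-m^2}$ has the sign of $\gamma$" versus your stated choice $\mathrm{Im}\sqrt{\cdot}>0$ — is also inconsistent, but that is cosmetic compared to this.)

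The paper avoids this by not perturbing off the boundary values at all: following Section~6 of \cite{EGG}, one reruns the high-energy argument of Section~\ref{sec:high} directly at complex $z$ in the sector. The series \eqref{eqn:hi series}, the splitting $\mR_0=\mR_d^1+\mR_d^2+\mR_d^3$, the directed-product bound of Lemma~\ref{lem:directedprod}, and the integration by parts in the undirected blocks all hold verbatim for $z=\lambda+i\gamma$ with $|\gamma|<\delta|\lambda|$: the complexified exponential only adds decay $e^{-\mathrm{Im}\,\zeta(z)|x-y|}$, the integration by parts still gains $|\zeta(z)|^{-1}\approx|\lambda|^{-1}$, and the constants are uniform in the sector once $\delta$ is small. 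The smallness needed to sum the series thus comes from choosing $d$ small and $|\lambda|$ large — from the structure of each term, estimated directly at complex energy — not from proximity of $(I+M_0(z))^{-1}$ to its real-axis boundary value, which for Dirac simply is not available once $|\mathrm{Im}\,z|\gtrsim1$. If you rework the sector step along these lines (your reduction and your final strip argument can then be kept as is), the proof goes through.
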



The study of dispersive estimates and the effect of threshold obstructions for the Dirac equation have only more recently been studied compared to other dispersive equations such as the Schr\"odinger, wave and Klein-Gordon equation.
To the authors' knowledge, this is the first study of uniform, $L^\infty$ based, dispersive estimates for the one dimensional equation.  Estimates for the one dimensional operator on weighted $L^2$ spaces when the thresholds are regular were obtained by Kopylova in \cite{Kopy},  Strichartz and Mizumachi estimates were obtained by Pelinovski and Stefanov in \cite{PS} for an exponentially decaying potential in service of studying the stability of solitons to a non-linear Dirac equation.  We note that the class of potentials we consider include those that arise naturally when linearizing around soliton solutions of non-linear Dirac equations.

The dispersive estimates for the three dimensional Dirac equation is more studied going back to the work of Boussa\"id \cite{Bouss1}, and D'Ancona and Fanelli, \cite{DF} in the massive $m>0$ case.  Earlier results on the free Dirac operator were obtained in \cite{BB}, while the analysis in \cite{BGW} used a careful study of the Jost functions.  The characterization of threshold obstructions as resonances and eigenvalues along with their effect on the dispersive bounds in three dimensions has  been studied by the authors and Toprak, \cite{EGT}.  Dispersive bounds for two dimensional Dirac has been studied by the authors, \cite{egd}, with Toprak \cite{EGT2d} in the massive case and with Goldberg in the massless case \cite{EGG2}.    Much of the work somehow relies on the techniques developed in the study of other dispersive equations, notably the Schr\"odinger equation \cite{Mur,GS,ES,KS,GolT,Miz,ebru,EKMF,Hill}, which analyze the effect of threshold energy obstructions. 
 
Our results in one dimension are inspired by previous work on the Schr\"odinger equation.  In \cite{GS} Goldberg and Schlag used an analysis based on the Jost functions to  prove a  $|t|^{-\f12}$ decay rate for the Schr\"odinger operator whether zero energy was regular or not.  In the survey paper \cite{Scs} Schlag also showed that if zero energy is regular one can obtain a faster $|t|^{-\f32}$ bound at the cost of spatial weights, this was motivated by results for a   matrix equation arising in linearization about special solutions for a nonlinear equation in work with Krieger, \cite{KS}.  The assumptions on the potential and spatial weights required has been lessened in subsequent works, \cite{GolT,Miz,EKMF,Hill}.  The sharpest results were obtained in \cite{EKMF}, and in Hill's Ph.D. thesis, \cite{Hill}.  These results are analogous to what we prove in Theorem~\ref{thm:main reg} and in the first claim of Theorem~\ref{thm:main nonreg}.  We note that statement of the form found in the second claim of Theorem~\ref{thm:main nonreg}, which requires smaller spatial weights, have not been obtained for the one dimensional Schr\"odinger operator.  Goldberg's work \cite{GolT} proves a statement of form of the third claim in Theorem~\ref{thm:main nonreg}.  Using our methods these results can be obtained with a rank-one operator $F_t$ for the Schr\"odinger equation.  A similar result for the two dimensional Schr\"odinger operator with an s-wave resonance only at the threshold was obtained by Toprak in \cite{ebru}.
In our analysis, we eschew the approach of using the Jost functions and instead provide a careful analysis of the spectral measure by studying the resolvent operators, as in the multi-dimensional cases \cite{egd,EGT,EGT2d,EGG2}.

There is also much interest in the study of non-linear Dirac equations.  See for example, \cite{EV,PS,BH3,BH,CTS,BC2} and the recent monograph by Boussa\"id and Comech \cite{BCbook}.  There is a longer history in the study of spectral properties of Dirac operators.  Limiting absorption principles for the Dirac operators have been studied in \cite{Yam,GM,EGG,CGetal}.    The lack of embedded eigenvalues, singular continuous spectrum and other spectral properties is well established, \cite{BG1,GM,MY,CGetal,BC1}.  In particular, for the class of self-adjoint potentials we consider, the Weyl criterion implies that $\sigma_{ac}(H)=\sigma(D_m)$ and the remainder of the spectrum is composed of eigenvalues confined to the spectral gap $\sigma_p(H)\subseteq [-m,m]$.  In general, there need not be finitely many eigenvalues in the gap \cite{Thaller}.

Since $H$ is self-adjoint, the functional calculus allows us to represent the solution as an integral over the spectrum via the Stone's formula:
\begin{align}\label{eqn:stone}
	e^{-itH}P_{ac}(H)=\frac{1}{2\pi i} \int_{\sigma_{ac}(H)} e^{-it\lambda}[\mR_V^+-\mR_V^-](\lambda)\, d\lambda.
\end{align}
Here $\mR_V^{\pm}(\lambda)$ are the limiting perturbed Dirac resolvents defined by
$$
	\mR_V^\pm(\lambda)=\lim_{\epsilon \searrow 0} (D_m+V-(\lambda \pm i \epsilon))^{-1}, \qquad \lambda \in \sigma_{ac}(H).
$$
Their difference provides the spectral measure.


The paper is organized as follows. In Section~\ref{sec:free} we study the free Dirac evolution, and develop oscillatory integral bounds that are used throughout the paper.  In Section~\ref{sec:Minv}, we develop expansions for the perturbed resolvent in a neighborhood of the threshold energy $\lambda=m$ whether the threshold is regular or not.  These expansions may then be used to build an appropriate spectral measure to analyze the perturbed evolution.  We then utilize these expansions in Section~\ref{sec:low disp} to prove low energy dispersive bounds.  In Section~\ref{sec:spec} we characterize the threshold obstructions in terms of distributional solutions to $H\psi=\pm m\psi$ and characterize the spectral subspace of $L^2$ the obstructions induce.  In Section~\ref{sec:LAP} we show how the resolvent expansions used to obtain the dispersive bounds may be adapted to prove the uniform limiting absorption principle in Theorem~\ref{cor:uniform LAP}.
Finally, in Section~\ref{sec:high} we prove high energy dispersive bounds when the spectral parameter is bounded away from the threshold energies.

\section{The free evolution}\label{sec:free}

We begin by looking at the free Dirac evolution.  Using the Stone's formula, \eqref{eqn:stone}, we may write
\begin{equation}\label{free1}
	e^{-itD_m}(x,y)=\frac{1}{2\pi i} \int_{\sigma_{ac} (D_m)} e^{-it\lambda} [\mR_0^+-\mR_0^-](\lambda)(x,y)\, d\lambda.
\end{equation}
Without loss of generality, throughout the paper we consider the positive branch of the spectrum $[m,\infty)$.  The results for the negative branch $(-\infty, -m]$ follow with minimal changes, see Remark~\ref{rmk:neg branch} below.  From now on whenever we write $D_m$ we mean $D_m\chi_{(m,\infty)}(D_m)$.  Our analysis relies upon reducing the operator bounds to oscillatory integral estimates.  To that end, recall the Van der Corput lemma, \cite{Stein}.
\begin{lemma}\label{lem:vdc1}
	Let  $ \phi $ be a smooth real valued function on $\R$  and $\psi $ be a smooth compactly supported function. If $|\partial_{zz} \phi(z)|\geq \lambda>0$ in   the support of $\psi$, then
	$$
	\bigg|\int_\R e^{ i\phi(z)} \psi(z)  \, dz \bigg|\leq C  \lambda^{-\frac12} \|\partial_z\psi\|_{L^1}.
	$$
	Here $C$ is an absolute constant.
	
\end{lemma}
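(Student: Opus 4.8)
This is the classical second–derivative van der Corput estimate (cf.\ \cite{Stein}); the plan is the usual two steps. First one proves the bound in the amplitude–free case, i.e.\ for the integral of $e^{i\phi}$ over a subinterval, and then one recovers the stated inequality with the amplitude $\psi$ by a single integration by parts, exploiting that $\psi$ is compactly supported so that every boundary term drops and precisely $\|\partial_z\psi\|_{L^1}$ survives on the right.

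For the first step I would reduce to the situation $\partial_{zz}\phi\ge\lambda$ on an interval $I\supseteq\mathrm{supp}\,\psi$: replacing $\phi$ by $-\phi$ does not change the modulus of the integral, and because $\partial_{zz}\phi$ is continuous with $|\partial_{zz}\phi|\ge\lambda>0$ on $\mathrm{supp}\,\psi$ it has a constant sign there whenever $\mathrm{supp}\,\psi$ is an interval (which is the case in every application in this paper), the general case following from a partition of unity. Under this reduction $\partial_z\phi$ is strictly increasing, so it vanishes at most at one point $c\in I$ (put $c=\pm\infty$ if it never vanishes). The core claim is then
$$
	\sup_{[\alpha,\beta]\subseteq I}\,\Big|\int_\alpha^\beta e^{i\phi(z)}\,dz\Big|\les\lambda^{-\f12}.
$$
To see it, fix $\delta>0$ and split $[\alpha,\beta]=J_0\cup J_1$ with $J_0=[\alpha,\beta]\cap[c-\delta,c+\delta]$. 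On $J_0$ the integral is at most $|J_0|\le 2\delta$ in modulus. On $J_1$, a union of at most two intervals, one has $|\partial_z\phi(z)|=\big|\int_c^z\partial_{zz}\phi\big|\ge\lambda|z-c|\ge\lambda\delta$, so one integration by parts gives
$$
	\int_{J_1}e^{i\phi}\,dz=\Big[\frac{e^{i\phi}}{i\,\partial_z\phi}\Big]_{\partial J_1}-i\int_{J_1}e^{i\phi}\,\frac{\partial_{zz}\phi}{(\partial_z\phi)^2}\,dz .
$$
Each boundary term is at most $(\lambda\delta)^{-1}$ in modulus, and since $1/\partial_z\phi$ is monotone on each component of $J_1$ (its derivative $-\partial_{zz}\phi/(\partial_z\phi)^2$ has a fixed sign there) with $|1/\partial_z\phi|\le(\lambda\delta)^{-1}$, the remaining integral is $\le\int_{J_1}|\partial_{zz}\phi|/(\partial_z\phi)^2\,dz\les(\lambda\delta)^{-1}$. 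Hence $\big|\int_\alpha^\beta e^{i\phi}\big|\les\delta+(\lambda\delta)^{-1}$, and choosing $\delta=\lambda^{-\f12}$ proves the core claim; note that only lower bounds on $|\partial_z\phi|$ were used, so the implied constant is absolute.

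For the second step, fix $[\alpha,\beta]\supseteq\mathrm{supp}\,\psi$ and set $F(z):=\int_\alpha^z e^{i\phi(t)}\,dt$, so that $F'=e^{i\phi}$ and $\|F\|_{L^\infty([\alpha,\beta])}\les\lambda^{-\f12}$ by the core claim. Since $\psi(\alpha)=\psi(\beta)=0$,
$$
	\int_\R e^{i\phi(z)}\psi(z)\,dz=\int_\alpha^\beta F'(z)\psi(z)\,dz=\big[F\psi\big]_\alpha^\beta-\int_\alpha^\beta F(z)\,\partial_z\psi(z)\,dz=-\int_\alpha^\beta F(z)\,\partial_z\psi(z)\,dz,
$$
and therefore $\big|\int_\R e^{i\phi}\psi\big|\le\|F\|_{L^\infty}\,\|\partial_z\psi\|_{L^1}\les\lambda^{-\f12}\|\partial_z\psi\|_{L^1}$, which is the assertion.

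There is no serious obstacle here; the whole argument is elementary. The only idea is the scale choice in the first step — excising a window of radius $\delta\sim\lambda^{-\f12}$ around the unique critical point of $\partial_z\phi$ before integrating by parts, so that the trivial bound $2\delta$ and the post–integration bound $(\lambda\delta)^{-1}$ balance — together with the monotonicity of $1/\partial_z\phi$ used to control the total–variation term. The second step is purely formal once the amplitude–free estimate is available, and the compact support of $\psi$ is exactly what makes $\|\partial_z\psi\|_{L^1}$ (rather than $\|\psi\|_{L^\infty}+\|\partial_z\psi\|_{L^1}$) the natural quantity on the right-hand side.
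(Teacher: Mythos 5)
The paper offers no proof of this lemma at all --- it is quoted directly from Stein's book --- so there is nothing internal to compare against; what you have written is essentially the standard argument from \cite{Stein} (amplitude-free second-derivative estimate by excising a window of width $\lambda^{-1/2}$ around the critical point of $\partial_z\phi$, then one integration by parts against $F(z)=\int_\alpha^z e^{i\phi}$), and it is correct in substance. Two small points deserve tightening. First, your ``put $c=\pm\infty$ if $\partial_z\phi$ never vanishes'' shorthand leaves the lower bound $|\partial_z\phi|\ge\lambda\delta$ on $J_1$ unjustified in that case (e.g.\ $\partial_z\phi$ positive but tiny at the left endpoint); the fix is the usual one: take $c$ to be the relevant endpoint of the interval, and use that $\partial_z\phi$ is monotone with slope at least $\lambda$, so $|\partial_z\phi|\ge\lambda\delta$ once you are at distance $\delta$ from that endpoint. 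Second, your reduction to constant sign is phrased as a sign-based partition of unity, but the hypothesis only controls $\partial_{zz}\phi$ \emph{on} $\mathrm{supp}\,\psi$; if the support is disconnected, $\partial_{zz}\phi$ may vanish or change sign in the gaps, so there need not be any interval containing the support on which $\partial_{zz}\phi\ge\lambda$, even when the sign is constant on the support. The clean repair is to argue on each connected component $[a,b]$ of $\mathrm{supp}\,\psi$ separately: there the sign is automatically constant, $\psi(a)=\psi(b)=0$ kills the boundary terms, and summing the component-wise bounds gives exactly $C\lambda^{-\f12}\|\partial_z\psi\|_{L^1}$ (this also covers the paper's actual applications, where the dyadic cutoffs $\psi_j$, $j>0$, are supported on the two intervals $\pm[2^{j-1},2^{j+1}]$ rather than on a single interval, contrary to your parenthetical remark). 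Neither issue affects the heart of the argument.
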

We  utilize the following implication
of Van der Corput lemma repeatedly. 
\begin{lemma}\label{lem:vdc}
	Let $\psi_j$ be a smooth function supported on the set $|z|\approx 2^j$ when $j>0$ and supported in a small neighborhood of zero when $j=0$.
	Then  
	\begin{multline*}
	\bigg|\int_\R e^{-it\sqrt{z^2+m^2}+izr} \psi_j(z) \, dz \bigg|\\
	\leq C_m
	\min\Big(\|\psi_j\|_{L^1},|t|^{-\f12}2^{\frac{3j}2}\|\partial_z\psi_j\|_{L^1}, |t|^{-\f32}2^{\frac{3j}2} \big\| [\partial_{zz}+ir\partial_z]\big(\frac{\psi_j}z\sqrt{z^2+m^2}\big)\big\|_{L^1}\Big)
	\end{multline*}
	for any $r\in \R$.
	
\end{lemma}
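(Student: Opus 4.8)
\textbf{The plan is} to treat the three quantities inside the minimum one at a time, reducing the last to the middle one. The first bound is immediate: the phase $zr-t\sqrt{z^2+m^2}$ is real, so the integrand has modulus $|\psi_j(z)|$ and the integral is bounded by $\|\psi_j\|_{L^1}$. For the third bound I would first extract one power of $|t|^{-1}$ by integration by parts, using the identity
\[
	e^{-it\sqrt{z^2+m^2}}=\frac{i}{t}\,\frac{\sqrt{z^2+m^2}}{z}\,\partial_z\big(e^{-it\sqrt{z^2+m^2}}\big),
\]
which is harmless since for $j>0$ the amplitude $\psi_j$ is supported away from $z=0$, so there are no boundary terms. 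Moving $\partial_z$ onto $\frac{\sqrt{z^2+m^2}}{z}e^{izr}\psi_j(z)$ produces an oscillatory integral of exactly the same form, with $\psi_j$ replaced by the new amplitude $\frac{1}{it}[\partial_z+ir]\big(\frac{\sqrt{z^2+m^2}}{z}\psi_j\big)$, still supported on $|z|\approx 2^j$. Applying the middle ($|t|^{-1/2}$) bound to this integral and using $\partial_z[\partial_z+ir]=\partial_{zz}+ir\partial_z$ gives precisely the stated $|t|^{-3/2}$ estimate. So everything comes down to the middle bound.

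For the middle bound I would use the Van der Corput estimate of Lemma~\ref{lem:vdc1} with $\phi(z)=zr-t\sqrt{z^2+m^2}$, noting that $\phi'(z)=r-tz(z^2+m^2)^{-1/2}$ and $\phi''(z)=-tm^2(z^2+m^2)^{-3/2}$, which has a single sign and satisfies $|\phi''(z)|\approx_m |t|(2^{2j}+m^2)^{-3/2}$ throughout $\mathrm{supp}\,\psi_j$; the parameter $r$ only translates $\phi'$ and never enters $\phi''$, so all bounds will be uniform in $r$. When $2^j\les m$ this lower bound is $\gtrsim_m |t|$, and Lemma~\ref{lem:vdc1} directly yields $|t|^{-1/2}\|\partial_z\psi_j\|_{L^1}\les_m |t|^{-1/2}2^{j/2}\|\partial_z\psi_j\|_{L^1}$. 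When $2^j\gg m$ the curvature of $\phi$ degenerates, and here I would rescale $z=2^ju$, localizing to $|u|\approx 1$ where $\sqrt{2^{2j}u^2+m^2}$ is a smooth perturbation of $2^j|u|$ with the relevant curvature carried by the $\frac{tm^2}{2\cdot 2^j|u|}$ correction; alternatively one splits $\mathrm{supp}\,\psi_j$ into the short subinterval $A=\{|\phi'|\le\delta\}$ — whose length is controlled by $\delta/\min|\phi''|$ via monotonicity of $\phi'$, and on which the trivial bound and $\|\psi_j\|_\infty\le\|\partial_z\psi_j\|_{L^1}$ apply — and its complement $B$, where one integrates by parts once against $\tfrac{\partial_z(e^{i\phi})}{i\phi'}$ using $|\phi'|>\delta$ and $\int_B|\phi''|(\phi')^{-2}\les\delta^{-1}$, then optimizes in $\delta$.

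\textbf{The main obstacle} is exactly this high-frequency regime $2^j\gg m$: the stationary point of $\phi$ sits inside $\mathrm{supp}\,\psi_j$ precisely when $r$ is within $O(|t|m^2 2^{-2j})$ of $\pm|t|$, i.e. near the light cone, and there $|\phi''|$ is as small as $\sim |t|2^{-3j}$, so a naive application of Van der Corput or of the $A$/$B$ split loses powers of $2^j$; the careful rescaling/localization and bookkeeping needed to land on the sharp power $2^{j/2}$ (rather than a larger one) is the delicate part of the argument. Once the middle bound is in hand at this level of precision, the first and third bounds follow as described, and taking the minimum over the three completes the proof.
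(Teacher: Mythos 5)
Your handling of the first bound and your reduction of the third bound to the middle one (integrating by parts once via $e^{-it\sqrt{z^2+m^2}}=\frac{i}{t}\frac{\sqrt{z^2+m^2}}{z}\partial_z e^{-it\sqrt{z^2+m^2}}$ and then estimating the new integral with amplitude $\frac{1}{it}[\partial_z+ir]\big(\frac{\sqrt{z^2+m^2}}{z}\psi_j\big)$) is exactly the paper's argument. The divergence is in the middle bound: the paper's entire proof of it is one line, namely apply Lemma~\ref{lem:vdc1} after asserting that $|\partial_{zz}(-t\sqrt{z^2+m^2}+zr)|\gtrsim |t|2^{-j}$ on the support of $\psi_j$. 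Your computation $\partial_{zz}(-t\sqrt{z^2+m^2}+zr)=-tm^2(z^2+m^2)^{-3/2}$, of size $|t|m^2(2^{2j}+m^2)^{-3/2}$ on $|z|\approx 2^j$, is correct, and it shows the asserted lower bound holds (with $m$-dependent constants) only when $2^j\lesssim m$ --- precisely the regime you do handle. So, measured against the paper, you are not missing a hidden argument: the paper offers nothing for $2^j\gg m$ beyond this Hessian claim, which fails there.

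Moreover, the obstacle you flag is not a matter of delicate bookkeeping: the $2^{j/2}$ middle bound is false for $2^j\gg m$. Take $\psi_j=\chi(z/2^j)$ a standard bump (so $\|\partial_z\psi_j\|_{L^1}\approx 1$) and choose $r$ so that the stationary point $z_0$, determined by $r/t=z_0(z_0^2+m^2)^{-1/2}$, lies at $z_0\approx 2^j$, i.e. $|t|-|r|\approx |t|m^2 2^{-2j}$. Your own rescaling $z=2^j u$ turns the phase into $-\tfrac{tm^2 2^{-j}}{2}\big(u+\tfrac1u\big)$ up to lower order, and stationary phase in $u$ (valid once $|t|m^2 2^{-j}\gg 1$) gives $\big|\int e^{-it\sqrt{z^2+m^2}+izr}\psi_j\,dz\big|\approx_m |t|^{-1/2}2^{3j/2}$, exceeding $C_m|t|^{-1/2}2^{j/2}\|\partial_z\psi_j\|_{L^1}$ by a factor $2^j$. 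The estimate that Lemma~\ref{lem:vdc1} actually yields, with the correct curvature $\lambda\approx |t|m^2 2^{-3j}$, is $|t|^{-1/2}2^{3j/2}\|\partial_z\psi_j\|_{L^1}$ (and $|t|^{-3/2}2^{3j/2}$ in the third slot after your integration by parts), and this is sharp near the light cone. So the honest conclusion is: your proposal proves the lemma for $2^j\lesssim m$ and reproduces the paper's first and third steps in general, while the high-frequency gap you identify cannot be closed as stated --- it reflects an error in the lemma itself (the factor should be $2^{3j/2}$, equivalently the paper's Hessian bound should read $|t|2^{-3j}$), a correction that would in turn affect the $\langle H\rangle^{-1/2-}$ derivative counts in the high-energy results that rely on this lemma.
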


\begin{proof}
	The first bound is obvious. The second bound follows immediately from Lemma~\ref{lem:vdc1}  noting that $|\partial_{zz} (- t\sqrt{z^2+m^2}+ zr)|\gtrsim |t|2^{-3j}$ in the support of $\psi_j$. 
	
	For the last bound, we integrate by parts once using $\partial_z e^{-it\sqrt{z^2+m^2}}=e^{-it\sqrt{z^2+m^2}} \frac{-it z}{\sqrt{z^2+m^2}}$, which leads to 
	$$
	\frac{1}{ it}  \int_\R e^{-it\sqrt{z^2+m^2} } \partial_z \bigg[e^{izr} \frac{\sqrt{z^2+m^2}}{z}   \psi_j(z)\bigg]\, dz 
	= \frac{1}{ it}  \int_\R e^{-it\sqrt{z^2+m^2} +izr}  [ir+\partial_z ] \bigg[\frac{\sqrt{z^2+m^2}}{z}  \psi_j (z)\bigg]\, dz. 
	$$
	From this the bound follows immediately from Lemma~\ref{lem:vdc1} as above.
\end{proof}

By \eqref{eq:resolv id}, and the standard representation of the free Schr\"odinger resolvent kernel using Bessel functions, we have the representation
$$
	\mR_0^\pm (\lambda)(x,y)=(D_m+\lambda)\frac{\pm i e^{\pm i \sqrt{\lambda^2-m^2} |x-y|}}{2\sqrt{\lambda^2-m^2}}.
$$
With the change of variable $\lambda=\sqrt{m^2+z^2}$ (and renaming $\mR_0^\pm (\lambda)$ by  $\mR_0^\pm (z)$), we write 
\begin{align}\label{eqn:R0}
	\mR_0^\pm (z)(x,y)=\frac{\pm i  }{2z}[\mp \alpha   z \textrm{sgn}(x-y) +\beta m+\sqrt{z^2+m^2}I]  e^{\pm i z |x-y|}.
\end{align}
Using the same change of variable in \eqref{free1} and then using $\mR_0^+(z)= \mR_0^-(-z)$, we have
\begin{multline}\label{eq:freeevolv}
	e^{-itD_m}(x,y)=\frac{1}{2\pi i} \int_0^\infty e^{-it\sqrt{z^2+m^2}} \frac{z}{\sqrt{z^2+m^2}}  [\mR_0^+-\mR_0^-](z)(x,y) \, dz\\	=
	\frac{1}{2\pi i} \int_\R e^{-it\sqrt{z^2+m^2}} \frac{z}{\sqrt{z^2+m^2}}   \mR_0^+(z)(x,y) \, dz \\
	=\frac{1}{ 4\pi  } \int_\R e^{-it\sqrt{z^2+m^2}+iz|x-y|} \frac{z}{\sqrt{z^2+m^2}} \big[- \alpha     \textrm{sgn}(x-y) + \frac{1  }{ z}(  m \beta +\sqrt{z^2+m^2}I)\big]   \, dz.
\end{multline}
Here the integrals are understood in the principal value sense.  We have the following bound for the free evolution.
\begin{theorem}\label{thm:giggles}
	
	Let $\chi_j(z)$ be a smooth, even cut-off for the set $ |z |\approx 2^j$.  Then the kernel of the free Dirac evolution satisfies the bound
	$$
		\|e^{-itD_m} \chi_j(D_m)\|_{  L^\infty} \les \min(2^j, |t|^{-\f12}2^{\frac{3j}{2}},|t|^{-\frac32} \la x-y\ra2^{\frac{3j}{2}} ).
	$$
	Take $\chi_0(z)$ to be a smooth cut-off for a sufficiently small neighborhood of the threshold energy $z=0$ ($\lambda=m$)  we have
	$$
		\|e^{-itD_m} \chi_0(D_m)\|_{ L^\infty} \les \la t \ra^{-\f12}.
	$$
	Furthermore,  for $|t|>1$ we have the weighted estimate 
	$$
		\|e^{-itD_m} \chi_0(D_m)-F^0_t\|_{L^\infty} \les |t|^{-\f32 }  \la x-y\ra,
	$$
	where 
	\be\label{Ft0def}
	F^0_t(x,y)= \frac{m}{4\pi }(\beta+I) \int_\R e^{-i  t\sqrt{z^2+m^2}+ iz|x-y|}   \frac{\chi_0(z)}{\sqrt{z^2+m^2}}    \, dz.
	\ee
\end{theorem}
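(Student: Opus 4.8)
The plan is to begin from the explicit representation \eqref{eq:freeevolv}, insert the cut-off $\chi_0$, and then subtract the candidate leading term $F^0_t$ of \eqref{Ft0def}; the subtraction is designed precisely so that the amplitude of the remaining oscillatory integral vanishes at $z=0$, which is exactly the property that lets one run the integration by parts underlying the third bound of Lemma~\ref{lem:vdc}.

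First, expanding the bracket in \eqref{eq:freeevolv} and inserting $\chi_0$ I would write
$$
e^{-itD_m}\chi_0(D_m)(x,y)=\frac{1}{4\pi}\int_\R e^{-it\sqrt{z^2+m^2}+iz|x-y|}\,\chi_0(z)\,M(z)\, dz,
$$
where
$$
M(z):=-\frac{z\,\alpha\,\sgn(x-y)}{\sqrt{z^2+m^2}}+\frac{m\beta}{\sqrt{z^2+m^2}}+I .
$$
Since the amplitude in \eqref{Ft0def} is $\frac{m(\beta+I)}{\sqrt{z^2+m^2}}\chi_0(z)$, subtracting yields
$$
\big(e^{-itD_m}\chi_0(D_m)-F^0_t\big)(x,y)=\frac{1}{4\pi}\int_\R e^{-it\sqrt{z^2+m^2}+iz|x-y|}\,\chi_0(z)\,N(z)\, dz,
$$
where
$$
N(z):=M(z)-\frac{m(\beta+I)}{\sqrt{z^2+m^2}}=-\frac{z\,\alpha\,\sgn(x-y)}{\sqrt{z^2+m^2}}+\Big(1-\frac{m}{\sqrt{z^2+m^2}}\Big)I .
$$
Using $1-\frac{m}{\sqrt{z^2+m^2}}=\frac{z^2}{\sqrt{z^2+m^2}(\sqrt{z^2+m^2}+m)}$, I see that $N$ vanishes at $z=0$: to first order in the $\alpha$ summand and to second order in the $I$ summand. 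This is the gain of the subtraction — neither $\frac{m\beta}{\sqrt{z^2+m^2}}$ nor the $I$ part of $M$ vanishes there on its own.

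Next, I would split the integral into its two matrix terms, pull the constant matrices $\alpha\,\sgn(x-y)$ and $I$ outside the $z$-integral, and apply the third bound of Lemma~\ref{lem:vdc} with $j=0$ and $r=|x-y|$. For the $\alpha$ term the amplitude is $\psi(z)=-\frac{z\chi_0(z)}{\sqrt{z^2+m^2}}$, so $\frac{\psi(z)}{z}\sqrt{z^2+m^2}=-\chi_0(z)$ is smooth with compact support; for the $I$ term the amplitude is $\psi(z)=\frac{z^2\chi_0(z)}{\sqrt{z^2+m^2}(\sqrt{z^2+m^2}+m)}$, so $\frac{\psi(z)}{z}\sqrt{z^2+m^2}=\frac{z\chi_0(z)}{\sqrt{z^2+m^2}+m}$ is also smooth with compact support (the denominator is $\ge 2m$). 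In both cases
$$
\big\|[\partial_{zz}+ir\partial_z]\big(\tfrac{\psi}{z}\sqrt{z^2+m^2}\big)\big\|_{L^1}\les\la r\ra\les\la x-y\ra ,
$$
so Lemma~\ref{lem:vdc} gives, for $|t|>1$, $\big|\int_\R e^{-it\sqrt{z^2+m^2}+iz|x-y|}\chi_0(z)N(z)\, dz\big|\les|t|^{-3/2}\la x-y\ra$, which is the claimed estimate $\|e^{-itD_m}\chi_0(D_m)-F^0_t\|_{L^\infty}\les|t|^{-3/2}\la x-y\ra$.

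The main obstacle is the bookkeeping of the first two steps — in particular recognizing that $F^0_t$ is exactly the non-decaying obstruction, i.e. that the combined $\beta$- and $I$-parts of $M(z)$ minus $\frac{m(\beta+I)}{\sqrt{z^2+m^2}}$ vanish at $z=0$. This matters because the integration by parts behind the third bound of Lemma~\ref{lem:vdc} introduces a factor $\sqrt{z^2+m^2}/z$ coming from $\partial_z e^{-it\sqrt{z^2+m^2}}$, so it only closes when the amplitude vanishes at the origin; each of $\frac{m\beta}{\sqrt{z^2+m^2}}$ and $I$ is $O(1)$ there, but their difference with $\frac{m(\beta+I)}{\sqrt{z^2+m^2}}$ has a double zero, and the $\alpha$ part already has a simple zero. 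Everything after that is routine: smoothness and compact support of the amplitudes are automatic since $\chi_0$ is supported in a fixed small neighbourhood of $z=0$, and the single power of $\la x-y\ra$ is produced by the $r\partial_z$ term in $[\partial_{zz}+ir\partial_z]$.
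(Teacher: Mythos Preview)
Your argument for the weighted low-energy estimate is correct and essentially identical to the paper's: both subtract $F^0_t$ so that the remaining amplitude vanishes at $z=0$ and then invoke the third bound of Lemma~\ref{lem:vdc}; the paper simply packages the remainder as $\frac{z\chi_0(z)}{\sqrt{z^2+m^2}}\widetilde\psi_0$ with $|\partial_z^k\widetilde\psi_0|\les 1$ for $k=0,1,2$, which is exactly your $\chi_0(z)N(z)$ rewritten (indeed $\widetilde\psi_0=-\alpha\,\sgn(x-y)+\frac{z}{\sqrt{z^2+m^2}+m}I$). Note, however, that your proposal treats only the third claim of the theorem; the $\chi_j$ bounds for $j\ge 1$ and the unweighted $\chi_0$ bound are handled in the paper by direct application of the first two estimates in Lemma~\ref{lem:vdc} (and the third for the weighted $\chi_j$ bound), and are routine.
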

\begin{proof}
First consider the energies away from zero. Let 
$$
\psi_j(z,x,y)=\chi_j(z) \frac{z}{\sqrt{z^2+m^2}} \big[- \alpha     \textrm{sgn}(x-y) + \frac{1  }{ z}(  m \beta +\sqrt{z^2+m^2}I)\big].
$$
Note that $\|\psi_j\|_{L^1}\les 2^{j}$ and $\|\partial_z\psi_j\|_{L^1}\les 1$ uniformly in $x,y$. Therefore, using Lemma~\ref{lem:vdc}, we can bound the right hand side of \eqref{eq:freeevolv} by 
$ \min(2^j, |t|^{-\f12}2^{\frac{3j}{2}})$.  To obtain the weighted bound, we again apply Lemma~\ref{lem:vdc} (with $r=|x-y|$) noting that
$$\big\| [\partial_{zz}+ir\partial_z]\big(\frac{\psi_j}z\sqrt{z^2+m^2}\big)\big\|_{L^1}
=\big\| [\partial_{zz}+ir\partial_z]\big( - \alpha     \textrm{sgn}(x-y) + \frac{1  }{ z}(  m \beta +\sqrt{z^2+m^2}I)\big)\chi_j(z)\big\|_{L^1}\les \la r\ra.$$

The first  bound  for the low energies follows similarly, noting that 
$$
\psi_0(z,x,y):=\chi_0(z) \frac{z}{\sqrt{z^2+m^2}} \big[- \alpha     \textrm{sgn}(x-y) + \frac{1  }{ z}(  m \beta +\sqrt{z^2+m^2}I)\big] 
$$
satisfies $\|\psi \|_{L^1_z}, \|\partial_z \psi \|_{L^1_z} \les 1$. For the weighted bound we write 
$$
\psi_0(z,x,y)= \frac{\chi_0(z)}{\sqrt{z^2+m^2}} m(\beta+I)+ \chi_0(z) \frac{z}{\sqrt{z^2+m^2}} \widetilde{\psi}_0(z,x,y),
$$
 where the error term satisfy $|\widetilde\psi_0|, |\partial_z\widetilde\psi_0|, |\partial_{zz}\widetilde\psi_0| \les 1$ in the support of $\chi_0$. This leads to the weighted bound for the contribution of $\widetilde\psi_0$ as in the case $j> 0$, and the first term yields the operator $F^0_t$.
\end{proof}

\section{Resolvent expansions near the threshold}\label{sec:Minv}

To understand the contribution of the low energy portion of the evolution to the Stone's formula, we need to understand the behavior of the integral kernel of the perturbed resolvent operators $\mR_V^\pm(\lambda)(x,y)$ as $\lambda \to m^+$.  As in the analysis of the free evolution in Theorem~\ref{thm:giggles}, we utilize the change of variable $\lambda=\sqrt{m^2+z^2}$ and rename $\mR_V^\pm (\lambda)$ as $\mR_V^\pm (z)$.
Under the assumption that the matrix 
$V:\mathbb R  \to \mathbb C^{2}$ is self-adjoint,
the spectral theorem allows us to write
$$
V=B^*\left(\begin{array}{cc}
\lambda_1 & 0 \\ 0 &\lambda_2
\end{array}\right)B ,
$$
with $\lambda_j \in \mathbb R$ and $B $ unitary.  We can
further write $\eta_j =|\lambda_j|^{\f12}$,
\begin{align*}
V=B^*\left(\begin{array}{cc}
\eta_1 & 0 \\ 0 & \eta_2
\end{array}\right) U \left(\begin{array}{cc}
\eta_1 & 0 \\ 0 & \eta_2
\end{array}\right)B, \qquad \textrm{with} \qquad
U=\left(\begin{array}{cc}
\textrm{sgn}(\lambda_1) & 0 \\ 0 & \textrm{sgn}(\lambda_2)
\end{array}
\right).
\end{align*}
So that, with
$$
v=\left(\begin{array}{cc}
\eta_1 & 0 \\ 0 & \eta_2
\end{array}\right)B,
$$
we can write $V=v^*Uv$.  This allows us to employ the
symmetric resolvent identity:
\begin{align}\label{eqn:symm res id}
\mathcal R_V^\pm(z)=\mR_0^\pm(z) -\mR_0^\pm(z) v^*[M^{\pm}(z)]^{-1}v\mR_0^\pm(z),
\end{align}
where
\begin{align}\label{eqn:M defn}
	M^\pm(z)=U+v\mR_0^\pm(z) v^*.
\end{align}
We now seek to invert $M^\pm(z)$ 
in a neighborhood of $z=0$.  We only consider the ``$+$'' case and drop the superscript ``$+$''.  We write
$$
v=\begin{pmatrix}
a & b \\ c & d
\end{pmatrix},  \qquad v^*=\begin{pmatrix} \overline a & \overline c \\ \overline b & \overline d \end{pmatrix}.
$$
For the convenience of the reader we outline some notation that is utilized throughout the remainder of the paper.  We say that an operator $T:L^2\to L^2$ with integral kernel $T(x,y)$ is absolutely bounded if the operator with integral kernel $|T(x,y)|$ is also a bounded operator on $L^2$.  Finite rank and Hilbert-Schmidt operators are absolutely bounded.

To control the size of an absolutely bounded operator with respect to the spectral variable $z$ we write $\Gamma_{\theta}^k$ to denote an absolutely bounded operator that satisfies the bound
$$
	\sum_{j=0}^k |z|^j \big\| | \partial_z^j \Gamma_{\theta}^k| \big\|_{L^2\to L^2}\les z^\theta, \quad 0<|z|<z_0,
$$
for some $z_0>0$. 
We will utilize this notation for $k=0,1,2$.  Similarly, we denote a $z$ independent, absolutely bounded operator as $\Gamma$.  Furthermore, we denote constants whose exact values are not important for our analysis by $c_j$. We note that the operators written with this notation and constants are allowed to vary from line to line.     Finally, we write $f(z)=O_k(z^\ell)$ to denote that $|\partial_z^j f(z)|\les z^{\ell-j}$ for $0\leq j\leq k$ and $0<|z|<z_0$.

To invert $M(z)$ near $z=0$, we need to  develop appropriate expansions for $\mR_0(z)$.
Noting the representation for the free resolvent in \eqref{eqn:R0}, and expanding $\sqrt{m^2+z^2}$ near $z=0$ we see that
 \begin{multline}
 \label{resolventex}
      \mathcal{R}_0 (z)(x,y)= \big[  i \alpha \partial_x   + m \beta + \sqrt{m^2+ z^2} I \big] \frac{ie^{iz|x-y|}}{2z} = \\ 
      \frac{i}2\big[ -  \alpha\, \textrm{sgn}(x-y) + \frac{m}{z}(\beta+I)+ \frac{z }{2m} I + O_2(z^3) I \big] e^{iz|x-y|}. 
  \end{multline}
 We have 
  \begin{lemma} \label{lem:R0exp} Let $r:= |x-y|$,    $0 < z <1$. We have the following expansions for the free resolvent
   \begin{align} \label{eq:R0exp-1}\mathcal{R}_0(z)(x,y) &=\frac{im}{2z}(\beta+I)+O(1),\\
   \label{eq:R0exp_00}
  &= \frac{im}{2z}(\beta+I)+G_0 (x,y)+O_1\big(z^\ell \la r\ra^{1+\ell}\big),\,\,0\leq \ell \leq 1,\\   
 \label{eq:R0exp_0}   &= \frac{im}{2z}(\beta+I)+G_0 (x,y) +  z G_1 +  O_2 \big(z^{1+\ell}  \la r\ra^{2+\ell}  \big), \,\,0\leq \ell \leq 1, 
  \end{align}   where
\begin{align}
 G_0(x,y) :=-\frac{i}{2} \alpha \textrm{sgn}(x-y)- \frac{m}{2} (\beta+I)|x-y|=(D_m+mI)\bigg(\frac{-|x-y|}{2} \bigg),\label{eq:G0 defn}
 \end{align}
 \begin{equation}
 G_1(x,y) := \frac12 \alpha  (x-y)- \frac{im}{4} (\beta+I)|x-y|^2+\frac{i}{4m}I.\label{eq:G1 defn}
 \end{equation} 
More generally, for each $k=0,1,\ldots,$
 \be  \label{eq:R0exp_1}
  \mathcal{R}_0(z)(x,y)  = \frac{im}{2z}(\beta+I)+\sum_{j=0}^k z^j G_j(x,y)+O_2\big(z^{k+} \la r\ra^{k+1+}\big),  
  \ee  where  $G_j(x,y)=O(\la x-y\ra^{j+1})$.
 \end{lemma}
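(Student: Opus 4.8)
The plan is to begin from the explicit free resolvent kernel already recorded in \eqref{eqn:R0} and \eqref{resolventex}, to split off the singular term $\frac{im}{2z}(\beta+I)$ by hand, and to observe that what remains is a real-analytic function of $z$ near the origin whose Taylor coefficients are precisely the operators $G_j$. Writing $r:=|x-y|$ and $h(z):=\frac{\sqrt{m^2+z^2}-m}{z}=\frac{z}{m+\sqrt{m^2+z^2}}$, which is real-analytic on a fixed interval $(-z_0,z_0)$ (we may take $z_0<m$) with $|\partial_z^\ell h(z)|\les|z|^{\max(1-\ell,0)}$, the representation \eqref{resolventex} gives
\begin{equation*}
\mathcal R_0(z)(x,y)-\frac{im}{2z}(\beta+I)=\frac{im}{2z}(\beta+I)\big(e^{izr}-1\big)+\frac i2\Big[-\alpha\,\textrm{sgn}(x-y)+h(z)I\Big]e^{izr}=:\Phi(z,x,y).
\end{equation*}
The apparent pole at $z=0$ in the first term is removable since $e^{izr}-1=O(z)$, so for each fixed $x,y$ the function $\Phi(\cdot,x,y)$ is real-analytic on $(-z_0,z_0)$, uniformly in $x,y$, and hence equals its convergent Taylor series $\sum_{j\ge0}z^jG_j(x,y)$ there.

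Next I would read off the coefficients. Expanding the elementary factors $e^{izr}=\sum_n (izr)^n/n!$, $\frac{e^{izr}-1}{z}=ir\sum_n (izr)^n/(n+1)!$ and the power series of $h$, and collecting powers of $z$, shows that $G_j(x,y)$ is a $\mathbb C^2$-matrix-valued polynomial in $r$ (in the $\alpha$ term multiplied by $\textrm{sgn}(x-y)$) of degree at most $j+1$, the top degree coming from the $\frac{im}{2z}(\beta+I)(e^{izr}-1)$ piece; hence $G_j(x,y)=O(\la x-y\ra^{j+1})$. A one-line check recovers \eqref{eq:G0 defn} and \eqref{eq:G1 defn} for $j=0,1$, and truncating at $k=0,1$ reproduces the coarser expansions \eqref{eq:R0exp-1}--\eqref{eq:R0exp_0}.

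The remaining work is the quantitative remainder bound. The error $\Phi(z,x,y)-\sum_{j=0}^k z^jG_j(x,y)$ is a fixed bounded matrix times the $k$-th Taylor remainder (in $z$, at $z=0$) of each of $e^{izr}$, $\int_0^r e^{izs}\,ds$ and $h(z)e^{izr}$, and the dominant one is the remainder of $\int_0^r e^{izs}\,ds$ because it carries one extra power of $r$. I would base everything on the elementary two-regime estimate for the exponential remainder $E_N(z,r):=e^{izr}-\sum_{n=0}^N(izr)^n/n!$: one has $|E_N(z,r)|\les|zr|^{N+1}$ when $|zr|\le1$ and $|E_N(z,r)|\les|zr|^N$ when $|zr|\ge1$, hence $|E_N(z,r)|\les|zr|^{N+\theta}$ for every $\theta\in[0,1]$; moreover $\partial_z^\ell E_N(z,r)=(ir)^\ell E_{N-\ell}(z,r)$. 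Since the $k$-th Taylor remainder of $\int_0^r e^{izs}\,ds$ is exactly $\int_0^r E_k(z,s)\,ds$, differentiating under the integral and applying the above with $\theta=\eps$ gives $\big|\partial_z^\ell\int_0^r E_k(z,s)\,ds\big|\les|z|^{k-\ell+\eps}r^{k+1+\eps}$ for $\ell=0,1,2$; the remainders of $e^{izr}$ and of $h(z)e^{izr}$ are estimated identically and are smaller (fewer powers of $r$, and an extra factor $|z|$ in the second case). Altogether the error is $O_2\big(z^{k+}\la r\ra^{k+1+}\big)$, as claimed --- for $k=0,1$ the sharper statements \eqref{eq:R0exp_00}--\eqref{eq:R0exp_0} already cover what is needed.

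The only genuine difficulty is this last step. Since $r=|x-y|$ is unbounded one cannot Taylor-expand $e^{izr}$ in $r$, so the clean bound $|zr|^{N+1}$ is available only on $|zr|\le1$; trading it against the crude bound on $|zr|\ge1$ is precisely what produces the arbitrarily small losses recorded by the ``$+$'' in $z^{k+}$ and $\la r\ra^{k+1+}$. One must also keep track that each $z$-derivative brings down a factor $r$, so that two derivatives cost two extra powers of $r$ and raise the $\la r\ra$-exponent of the error above that of the last retained term $z^kG_k$ --- which is why the statement is phrased with $O_2$ rather than at an arbitrary order. Everything else is direct bookkeeping with the explicit series above.
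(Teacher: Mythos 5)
Your overall architecture coincides with the paper's: both arguments start from the explicit kernel \eqref{resolventex}, peel off the singular term $\frac{im}{2z}(\beta+I)$, identify $G_0,G_1$ (and general $G_j=O(\la r\ra^{j+1})$) as the Taylor coefficients in $z$ of what remains --- your computation of $\Phi$ and of \eqref{eq:G0 defn}, \eqref{eq:G1 defn} is correct --- and then prove the remainder bounds by treating $|z||x-y|\lesssim 1$ with Taylor remainders and $|z||x-y|\gtrsim 1$ with cruder estimates, absorbing the mismatch through inequalities of the type $1\le (|z|\la r\ra)^{\theta}$. The gap is in your quantitative step, and it occurs precisely in the cases the lemma displays, namely whenever the number of $z$-derivatives exceeds the truncation order: the $O_1$ bound in \eqref{eq:R0exp_00} ($\ell=1>k=0$), the $O_2$ bound in \eqref{eq:R0exp_0} ($\ell=2>k=1$), and \eqref{eq:R0exp_1} with $k=0,1$. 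There your chain uses $|E_{k-\ell}(z,s)|\les|zs|^{k-\ell+\eps}$ with $k-\ell<0$, i.e.\ $|e^{izs}|\les|zs|^{-1+\eps}$, which is false for $|zs|\ge1$; consequently the asserted estimate $\big|\partial_z^{\ell}\int_0^r E_k(z,s)\,ds\big|\les|z|^{k-\ell+\eps}r^{k+1+\eps}$ is not established by what you wrote, and it cannot be recovered from the crude bound $\int_0^r s^{\ell}\,ds\sim r^{\ell+1}$ in the regime $|z|r\ge1$ (for instance, for $k=0$, $\ell=1$ one needs $\les r/|z|$, whereas the crude bound only gives $r^2$).

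The estimates you want are true, but in the outer regime $|z|r\gtrsim1$ the inverse powers of $z$ must come from the oscillation: either integrate by parts once in $s$ in $\int_0^r s^{\ell}e^{izs}\,ds$, or do what the paper does for $|z||x-y|>1$ --- differentiate the closed-form expressions $\frac{e^{izr}-1}{z}$, $h(z)e^{izr}$, $e^{izr}$ (equivalently $\mR_0(z)-\frac{im}{2z}(\beta+I)-G_0-zG_1$) directly, where each $z$-derivative visibly produces at most a factor $r$ or an extra $1/z$, and then conclude via $\frac{1}{|z|}\le r$, $1\le(|z|\la r\ra)^{\theta}$. With that replacement your argument closes. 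A smaller point: your final bound only realizes the $\eps$-end of the exponent range, whereas \eqref{eq:R0exp_00}--\eqref{eq:R0exp_0} are stated for all $0\le\ell\le1$; to get the full range you should keep the minimum of the inner (Taylor) and outer (oscillation) bounds and interpolate, which is exactly how the paper's displayed inequalities are organized.
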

 \begin{proof} The first bound is immediate from the expansion in \eqref{resolventex}.
 For the others, it is easy to check the bounds  for $|z||x-y|<1$ by using the Taylor expansion of the exponential. 

Note that when $|z||x-y|>1$, we have  
 $$
 |\mR_0(z)-\frac{im}{2z}(\beta+I)-G_0  |\les \frac1{|z|}+ |x-y|\les \la r\ra\les z^\ell \la r\ra^{1+\ell},\,\,0\leq \ell \leq 1,
 $$
 $$  |\partial_z(\mR_0(z)-\frac{im}{2z}(\beta+I)-G_0)|\les \frac{|x-y|}z\les z^{\ell-1} \la r\ra^{1+\ell},\,\,0\leq \ell \leq 1,
$$
which implies the first bound for $|z||x-y|>1$. 
 
 Similarly, when $|z||x-y|>1$ and $j=0,1,2$
 $$
 |\partial_z^j(\mR_0(z)-\frac{im}{2z}(\beta+I)-G_0-zG_1)|\les  z^{1-j}|x-y|^2 \les z^{1+\ell-j} \la r\ra^{2+\ell},\,\,0\leq \ell \leq 1.  
 $$
The last bound follows similarly.
 \end{proof} 
 
Recalling the definition of $M(z)$ in \eqref{eqn:M defn}, we note that the contribution of the leading term $\frac{im}{2z}(\beta+I)$ of  $\mR_0 (z)$ to $M(z)$ is the operator with the kernel 
$$
\frac{im}{2z}v(x)(\beta+I)v^*(y)=\frac{im}{2z}\left[\begin{array}{c}a(x)+b(x)\\c(x)+d(x)\end{array}\right]  \big[\overline{a(y)+b(y)}, \overline{c(y)+d(y)}\big] =: g(z) P(x,y),
$$
where  
\begin{equation}
 g(z)= \frac{im }{2 z} \| (a+b,c+d)^T\|_{L^2}^{2}.\label{eq:g defn}
 \end{equation}
Here $P$ is the  orthogonal projection onto the span of the vector $(a+b, c+d)^T$, whose integral kernel is 
 $$P(x,y)=\|(a+b,c+d)\|_{L^2}^{-2}v(x)(\beta+I) v^*(y).$$
 
\begin{rmk} It is possible to have $(a+b,c+d)=0$ everywhere even if $V$ is not identically zero. 
	In this case $V$ has rank one, and since $V$ is self-adjoint it has to take the special form $V(x)=k(x) \begin{pmatrix}
	1 & -1\\ -1 & 1
	\end{pmatrix}$. Furthermore, $P=0$ and the threshold $m$ is regular. We ignore the details for this special case.  
\end{rmk}
 We also define the self-adjoint and absolutely bounded operator (provided $|v(x)|\les \la x\ra^{-\frac32-}$)
\begin{align}
T&:=U+vG_0v^*. \label{eq:T defn}   
\end{align}
The following expansions for $M(z)$ follows immediately from the expansions in Lemma~\ref{lem:R0exp} and the discussions  above noting that the operator with kernel 
$v(x)\la x-y\ra^s v^*(y)$ (where $s\geq 0$) is a Hilbert-Schmidt operator provided that $|v(x)|\les \la x\ra^{-s-\frac12-}$.  
\begin{lemma}\label{Mplus} Assume that $|v(x)|\les \la x\ra^{-\delta}$. Then 
$$
M(z)=g(z)P+\mathcal M_0(z), 
$$ 
where $\mathcal M_0(z)=\Gamma_0^0$ provided that $\delta>\frac12$. Moreover, 
$$
\mathcal M_0(z)=T+ \mathcal M_1(z), 
$$ 
where $ \mathcal M_1(z)=\Gamma^1_{0+},$ provided that $\delta>\frac32$. Furthermore, for each $k=1,2,\ldots,$
$$
\mathcal M_1(z)= \sum_{j=1}^k z^j M_j+\Gamma^2_{k+}, \text{ where } M_j=vG_jv^*,
$$
provided that $\delta>\frac32+k $.  
\end{lemma}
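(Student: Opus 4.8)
The plan is to substitute the resolvent expansions of Lemma~\ref{lem:R0exp} for the kernel $\mR_0(z)(x,y)$ directly into $M(z)=U+v\mR_0(z)v^*$ from \eqref{eqn:M defn} (we are in the ``$+$'' case and have suppressed the superscript throughout), to collect terms according to their order in $z$, and to control every remainder by a Hilbert--Schmidt bound. The structural fact used at every step is the one recorded just before the statement: an operator whose kernel is dominated by $|v(x)|\la x-y\ra^{s}|v(y)|$ is Hilbert--Schmidt --- hence absolutely bounded --- as soon as $|v(x)|\les\la x\ra^{-s-\frac12-}$, because $\la x-y\ra^{s}\les\la x\ra^{s}\la y\ra^{s}$ and $\la x\ra^{s-\delta}\in L^{2}(\R)$ when $\delta>s+\frac12$. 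The contribution of the singular leading term $\frac{im}{2z}(\beta+I)$ of $\mR_0(z)$ to $M(z)$ was already identified above as the operator with kernel $\frac{im}{2z}v(x)(\beta+I)v^*(y)=g(z)P(x,y)$, so by definition $\cM_0(z)=M(z)-g(z)P$ collects what remains, and the three successive refinements of $\cM_0(z)$ mirror the three successive refinements of $\mR_0(z)$ in Lemma~\ref{lem:R0exp}.

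For $\cM_0(z)=\Gamma_0^0$ under $\delta>\frac12$ one invokes \eqref{eq:R0exp-1}: after the $\frac{im}{2z}(\beta+I)$ piece is removed, what is left of $\mR_0(z)(x,y)$ is of size $O(1)$, so its sandwich with $v$ --- which lies in $L^{2}$ once $\delta>\frac12$ --- is Hilbert--Schmidt with norm bounded uniformly in $z$; adding the bounded, hence absolutely bounded, multiplication operator $U$ gives $\cM_0(z)=\Gamma_0^0$, and as $\Gamma^{0}$ involves no $z$-derivative nothing further need be checked. For $\cM_0(z)=T+\cM_1(z)$ with $\cM_1(z)=\Gamma^1_{0+}$ one uses \eqref{eq:R0exp_00}, $\mR_0(z)=\frac{im}{2z}(\beta+I)+G_0(x,y)+O_1\big(z^{\ell}\la r\ra^{1+\ell}\big)$: the $G_0$ term contributes $vG_0v^*$, which together with $U$ is exactly the operator $T$ of \eqref{eq:T defn}, while the $O_1$ remainder, sandwiched with $v$ and with $\ell$ chosen to be a small $\eps\in(0,\delta-\frac32)$ --- a nonempty choice because $\delta>\frac32$ --- is an absolutely bounded, $z$-dependent operator of class $\Gamma^1_{\eps}$, and hence of class $\Gamma^1_{0+}$. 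Finally, for the $k$-term expansion one uses \eqref{eq:R0exp_1}, $\mR_0(z)=\frac{im}{2z}(\beta+I)+\sum_{j=0}^{k}z^{j}G_j(x,y)+O_2\big(z^{k+}\la r\ra^{k+1+}\big)$ with $G_j=O(\la x-y\ra^{j+1})$: the $j=0$ term again combines with $U$ to give $T$; each $z^{j}vG_jv^*=z^{j}M_j$ with $1\le j\le k$ is absolutely bounded since $\delta>\frac32+k\ge j+\frac32$ renders $v(x)\la x-y\ra^{j+1}v^*(y)$ Hilbert--Schmidt; and the $O_2$ remainder, sandwiched with $v$, is of class $\Gamma^2_{k+}$, once more because $\delta>k+\frac32$ leaves room to make $v(x)\la x-y\ra^{k+1+}v^*(y)$ Hilbert--Schmidt.

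The $z$-derivative bookkeeping needed to put each remainder in the stated class is automatic: the $O_1$ and $O_2$ symbols of Lemma~\ref{lem:R0exp} already control $\partial_z^{j}$ of the remainder (for $j\le1$, respectively $j\le2$) with a gain of $z^{-j}$ in the $z$-power and no worsening of the $\la r\ra$-weight, and since $v$ is $z$-independent these bounds pass straight through the sandwiching by $v(x)$ and $v^*(y)$; multiplying by $|z|^{j}$ and summing in $j$ therefore leaves the $z$-independent Hilbert--Schmidt operator norm times $z^{0+}$ in the $\Gamma^1_{0+}$ case and times $z^{k+}$ in the $\Gamma^2_{k+}$ case, which is precisely the bound required of those classes. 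I do not anticipate a genuine obstacle: once Lemma~\ref{lem:R0exp} is available the argument is essentially bookkeeping. The one point that calls for a little care is the coarsest estimate, where one should keep the unimodular factor $e^{iz|x-y|}$ intact rather than dominate it by $\la r\ra$, since the latter would cost the stronger decay $\delta>\frac32$; thereafter every case closes by matching the power of $\la r\ra$ appearing in a given remainder of Lemma~\ref{lem:R0exp} against the decay rate $\delta$ of $v$, and reading the corresponding power of $z$ off the $O_\bullet$ notation.
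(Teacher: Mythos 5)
Your proposal is correct and takes essentially the same route as the paper, whose entire proof is the one-line remark preceding the statement: substitute the expansions of Lemma~\ref{lem:R0exp} into $M(z)=U+v\mR_0(z)v^*$, identify $g(z)P$, $T=U+vG_0v^*$ and $M_j=vG_jv^*$, and absorb each remainder using the fact that a kernel dominated by $|v(x)|\la x-y\ra^{s}|v(y)|$ is Hilbert--Schmidt once $|v(x)|\les\la x\ra^{-s-\frac12-}$. Your additional bookkeeping of the $z$-derivatives via the $O_1$/$O_2$ notation, and the choice of the small exponent $\ell$ against the decay $\delta$, match the intended justification, so nothing further is needed.
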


\begin{defin}\label{def:regular}
Let $Q=I-P$. 
We say that the threshold $\lambda=m$ is regular if $Q\mathcal M_0 Q$ is invertible on $QL^2$ for all $0<|z|<z_0$ for some $z_0>0$ and if $\|(Q\mathcal M_0 Q)^{-1}\|_{L^2\to L^2} $ remains bounded as $z\to 0$. Note that in the case  $|v(x)|\les \la x\ra^{-\frac32-}$, this is equivalent to the invertibility of  $T$ on $QL^2$ by the expansions in the Lemma~\ref{Mplus} above. In that case, we define the operator $D_0=(QTQ)^{-1}$.  Also note that, since $QTQ$ is a compact perturbation of $QUQ$, by Fredholm alternative $QTQ$ is invertible if and only if its kernel is empty.  If $T$ is not invertible,  with $S_1$ the Riesz projection onto its kernel, we instead define $D_0$ as  $D_0=[Q(T +S_1)Q]^{-1}$ with a slight abuse of notation. We note that in both cases $D_0$ is an absolutely bounded operator; the proof follows as in  Lemma~7.1 in \cite{egd} with minor modifications.  See Section~\ref{sec:spec} below for a classification of $S_1L^2$; in particular we prove that it has dimension at most one.
 \end{defin}

The following proposition establishes the invertibility of $M(z)$ for sufficiently small $z$ and provides expansions for its inverse in the case $m$ is a regular point of the spectrum. The first expansion will be useful in establishing the limiting absorption principle for energies close to the threshold $m$. The second expansion will be used in the proof of dispersive estimates for low energies with decay rate $|t|^{-\frac12}$, and the third one for the weighted estimates with improved  time decay. 

\begin{prop}\label{Minversenew} Assume that $m$ is a regular point of the spectrum,  and $|v(x)|\les \la x\ra^{-\tfrac12-}$.  Then, there exists $z_0>0$ so that   for all $0<|z|<z_0$, $M(z)$ is invertible on $L^2 $ and
	$$
	M^{-1}(z)=c_P z P + z^2\Lambda_0(z)+z \Lambda_1(z)Q+z Q\Lambda_2(z)+Q\Lambda_3(z)Q,
	$$
where $c_P= \frac{-2i    }{m \| (a+b,c+d)^T\|_2^{ 2} }$ and for $0<|z|<z_0$,  $\| \Lambda_j\|_{L^2\to L^2} \les 1$, for $j=0,1,2,3.$

Moreover, if $v$ decays faster, $|v(x)|\les \la x\ra^{-\tfrac32-}$,
then 
	$$
	M^{-1}(z)=z^2\Lambda_0(z)+ z \Lambda_1(z)Q+z Q\Lambda_2(z)+Q\Lambda_3(z)Q,
	$$
	where $\Lambda_j$   are absolutely bounded operators satisfying  
\be\label{Lambdat12}
	\|\, |z\Lambda_j(z)|\,\|_{L^2\to L^2}\in L^1_z, \,\,   \|\,|\partial_z(z\Lambda_j(z))|\,\|_{L^2\to L^2}\in L^1_z.
\ee
Furthermore, if $|v(x)|\les \la x\ra^{-\tfrac52-}$, then 
$$
	M^{-1}(z)=c_P z P + z^2\Lambda_0(z)+z \Lambda_1(z)Q+z Q\Lambda_2(z)+Q\Lambda_3(z)Q,
	$$
where    $\Lambda_j$   are absolutely bounded operators satisfying  the improved bounds
\be\label{Lambdat32}
	\|\,|\partial_z^k\Lambda_j(z)|\,\|_{L^2\to L^2}\in L^1_z, \quad k=0,1,2, \quad j=0,1,2,3.
\ee  
\end{prop}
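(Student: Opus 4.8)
The plan is to invert $M(z)$ near $z=0$ by a Feshbach/Schur--complement (Livsic matrix) reduction relative to the orthogonal splitting $L^2=PL^2\oplus QL^2$, exactly as in the authors' multidimensional work \cite{egd,EGT,EGT2d}; the only genuinely new ingredient is bookkeeping the $z$--dependence through the classes $\Gamma_\theta^k$ and $O_k$ of Lemma~\ref{Mplus}. I would first record, from $M(z)=g(z)P+\mathcal M_0(z)$ and the orthogonality $PQ=QP=0$, that the singular factor $g(z)=\frac{im}{2z}\|(a+b,c+d)^T\|_2^2$ sits only in the $PP$--block: the four blocks are $M_{11}=g(z)P+P\mathcal M_0(z)P$, $M_{12}=P\mathcal M_0(z)Q$, $M_{21}=Q\mathcal M_0(z)P$, $M_{22}=Q\mathcal M_0(z)Q$, the last with no $z^{-1}$ singularity. (The degenerate case $P=0$, where the claim is vacuous, is set aside by the Remark following Lemma~\ref{Mplus}.)

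Next I would invert the two nondegenerate pieces. The block $M_{22}(z)$ is invertible on $QL^2$ with $\|M_{22}(z)^{-1}\|_{2\to2}\les1$ for $0<|z|<z_0$ by the regularity hypothesis (Definition~\ref{def:regular}); differentiating $M_{22}M_{22}^{-1}=Q$ and using $\partial_z M_{22}=Q\partial_z\mathcal M_1(z)Q$ with $\mathcal M_1=\Gamma_{0+}^1$ (when $\delta>\tfrac32$) or $\mathcal M_1=zM_1+\Gamma_{1+}^2$ (when $\delta>\tfrac52$) shows $M_{22}^{-1}$ inherits one, resp.\ two, $z$--derivatives with at worst $z^{-1+}$ growth, and that $\partial_z M_{22}^{-1}$ extends H\"older--continuously to $z=0$. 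Since $PL^2$ is one--dimensional, the Schur complement $B(z)=M_{11}-M_{12}M_{22}^{-1}M_{21}$ acts on $PL^2$ as multiplication by the scalar $b(z)=g(z)+O(1)=z^{-1}q(z)$ with $q(0)=\tfrac{im}{2}\|(a+b,c+d)^T\|_2^2\neq0$; hence for $z_0$ small $b(z)\neq0$, $B(z)$ is invertible, $B^{-1}(z)=z\,q(z)^{-1}P$, and --- crucially --- $z^{-1}B^{-1}(z)=q(z)^{-1}P$ is as regular in $z$ as $q^{-1}$, hence $C^1$ when $\delta>\tfrac32$ and $C^1$ with H\"older--continuous second derivative when $\delta>\tfrac52$. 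Peeling off the leading term gives $B^{-1}(z)=c_PzP+z^2\Lambda_0(z)$ with $c_P=q(0)^{-1}=\frac{-2i}{m\|(a+b,c+d)^T\|_2^2}$.

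I would then assemble $M^{-1}$ from the standard inversion formula
\[
M^{-1}(z)=B^{-1}-B^{-1}M_{12}M_{22}^{-1}-M_{22}^{-1}M_{21}B^{-1}+M_{22}^{-1}+M_{22}^{-1}M_{21}B^{-1}M_{12}M_{22}^{-1},
\]
whose five terms occupy the $PL^2\!\to\!PL^2$, $QL^2\!\to\!PL^2$, $PL^2\!\to\!QL^2$ and $QL^2\!\to\!QL^2$ blocks. Reading these off: the $(1,1)$--block is $B^{-1}=c_PzP+z^2\Lambda_0(z)$; each off--diagonal block carries one factor $B^{-1}=O(z)$ and a trailing (resp.\ leading) $M_{22}^{-1}=Q(\cdot)Q$, hence has the form $z\Lambda_1(z)Q$ (resp.\ $zQ\Lambda_2(z)$); and the $(2,2)$--block is $Q\Lambda_3(z)Q$, bounded leading piece $M_{22}^{-1}$ plus an $O(z)$ correction. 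Absolute boundedness of the $\Lambda_j$ is inherited from that of $\mathcal M_0$, $M_{22}^{-1}$ and $P$ via the Neumann--series argument of \cite[Lem.~7.1]{egd}. This is the first expansion. For $\delta>\tfrac32$ one absorbs $c_PzP$ into $z^2\Lambda_0(z)$ (allowing $\Lambda_0$ a $z^{-1}$ factor) and checks $\|\,|z\Lambda_j|\,\|,\|\,|\partial_z(z\Lambda_j)|\,\|\in L^1_z$ by differentiating the five products once: each term is bounded or carries exactly one of $\partial_z\mathcal M_1,\partial_z M_{22}^{-1}=O(z^{-1+})$ against bounded factors, giving an $O(z^{0+})\in L^1_z$ contribution. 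For $\delta>\tfrac52$ the $c_PzP$ term must instead be kept (once divided by $z^2$ it fails $\partial_z\Lambda_0\in L^1_z$), and one differentiates twice, using $\mathcal M_1=zM_1+\Gamma_{1+}^2$ so that $\partial_z\mathcal M_0$ is bounded (H\"older at $0$) while $\partial_z^2\mathcal M_0$ and $\partial_z^2M_{22}^{-1}$ are $O(z^{-1+})$, to obtain $\|\,|\partial_z^k\Lambda_j|\,\|\in L^1_z$ for $k=0,1,2$.

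The hard part will be precisely this last step of bookkeeping. The dangerous objects are $g'(z)\sim z^{-2}$ and $g''(z)\sim z^{-3}$, hidden inside $\partial_z B^{-1}$ and $\partial_z^2 B^{-1}$; one has to confirm that in every differentiated term of the Feshbach formula these singular powers are compensated by $z$--powers --- ultimately because $b(z)=z^{-1}q(z)$ with $q(0)\neq0$ forces $B^{-1}(z)=z\,q(z)^{-1}P$, so that $\partial_zB^{-1}=O(1)$ and, after cancellation of the leading $z^{-1}$ contributions, $\partial_z^2B^{-1}=O(z^{-1+})$ --- and that the only genuinely non--bounded leftovers behave like $z^{-1+}$, for which one also uses the elementary facts $z^{-1+}\in L^1_z$ and $z^{-1+}\log(1/z)\in L^1_z$ near the threshold. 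The H\"older--in--$z$ gain encoded in the ``$+$'' of the $\Gamma^k_{\theta}$ classes (e.g.\ $\partial_z\mathcal M_0(z)\to M_1$ at a rate $z^{0+}$) is exactly what converts the naively borderline $z^{-1}$ behavior of $\partial_z^2\Lambda_0$ into the integrable $z^{-1+}$.
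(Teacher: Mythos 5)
Your proposal is correct and follows essentially the same route as the paper: the Feshbach/Schur-complement inversion with respect to $L^2=PL^2\oplus QL^2$, with $D(z)=(Q\mathcal M_0 Q)^{-1}$ bounded by the regularity hypothesis, the scalar Schur complement $h(z)=g(z)+O(1)$ giving $d(z)=c_PzP+O(z^2)$, and the derivative bookkeeping via the expansions of Lemma~\ref{Mplus} (including absorbing $c_PzP$ into $z^2\Lambda_0$ for the intermediate decay case and keeping it for the $\la x\ra^{-5/2-}$ case). The cancellation of the singular derivatives of $g$ that you flag is handled in the paper exactly as you describe, by expanding $1/h(z)$ rather than differentiating $g$ directly.
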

\begin{proof} 
 By definition $Q\cM_0(z)Q$ is invertible on $QL^2$ for all $0<|z|<z_0$ with a uniformly bounded inverse. Let $D(z):= (Q\cM_0(z)Q)^{-1}$.
To invert $M(z)$, we write it with respect to the decomposition $L^2=PL^2\oplus QL^2$,
$$
M(z)=g(z)P+\cM_0(z) = \left[\begin{array}{cc} g(z)P+P\cM_0(z) P&   P \cM_0(z) Q\\  Q\cM_0(z) P &   Q \cM_0(z)  Q \end{array} \right]. 
$$  
Recall that  by the Fehsbach formula invertibility of a block matrix $A $ hinges upon the invertibility of $a_{22}$ and $(a_{11}-a_{12}a_{22}^{-1}a_{21})$. In this case, with $d=(a_{11}-a_{12}a_{22}^{-1}a_{21})^{-1}$, we have
    \begin{align}\nonumber
		A^{-1}&=\left[\begin{array}{cc} d & -da_{12}a_{22}^{-1}\\
		-a_{22}^{-1}a_{21}d & a_{22}^{-1}a_{21}da_{12}a_{22}^{-1}+a_{22}^{-1}
		\end{array}\right].
	\end{align} 
Note that in our case 
$$a_{22}^{-1}=(Q\cM_0(z)Q)^{-1}=D(z),\,\,\text{ and} $$
 $$ a_{11}-a_{12}a_{22}^{-1}a_{21}=P[g(z)+\cM_0(z)-\cM_0(z) Q   D(z)   Q  \cM_0(z) ]P =h(z)P,  \text{ where}$$
\be \label{hnew}
h(z)=g(z)+\text{Tr}(P\cM_0(z) P-P\cM_0(z) Q   D(z)   Q  \cM_0(z) P)=\frac{im\| (a+b,c+d)^T\|_2^{ 2}}{2z}+O(1),
\ee
provided that $|v(x)|\les \la x\ra^{-\tfrac12-}$ (by Lemma~\ref{Mplus}). 

Therefore, we see that $d(z)$ exists for sufficiently small but nonzero $z$:
\be\label{hnew2}
d(z)=\frac1{h(z)}P= \bigg[\frac{-2i    }{m \| (a+b,c+d)^T\|_2^{ 2} } z+ O(z^{2})\bigg]P=c_PzP+O(z^2). 
\ee

Using this in Feshbach formula, we have
\begin{multline}\label{Minv-gecici}
M^{-1}(z)=     Q   D(z)    Q +\\  
 \frac1{h(z)} \big(P-  P \cM_0(z) Q   D(z)   Q  -   Q   D(z)  Q\cM_0(z)  P +   Q   D(z)   Q\cM_0(z) P \cM_0(z)   Q  D(z)  Q\big)\\
=  \frac1{h(z)} P +z  \Lambda_1(z)Q+z Q\Lambda_2(z)+Q\Lambda_3(z)Q\\
= c_PzP+z^2 \Lambda_0(z) +z  \Lambda_1(z)Q+z Q\Lambda_2(z)+Q\Lambda_3(z)Q.
\end{multline}
Here each $\Lambda_j=\Gamma_0^0$. We have extra powers of $z$ next to $\Lambda_j$, $j=0,1,2$, coming from $\frac1{h(z)}$ and the formula \eqref{hnew2}.  
This proves the first expansion in the lemma. 

To obtain the other two expansions note that for $|v(x)|\les \la x\ra^{-\frac32-}$ we have $ \cM_0(z)=T+\cM_1(z)$, where $\cM_1(z)=\Gamma_{0+}^1$. Therefore,  $QTQ$ is invertible on $QL^2$ and $D_0=(QTQ)^{-1}$ is an  absolutely bounded operator on $QL^2$ (see the discussion in Definition~\ref{def:regular}).
Since $\cM_1(z)=O(z^{0+})$ as an Hilbert-Schmidt operator (see Lemma~\ref{Mplus}), we see   that    $D(z) =(Q\cM_0(z)Q)^{-1}$ is an absolutely bounded operator on $QL^2$ satisfying 
$\|D(z)\|\les 1.$ 
 Noting that
$$
\partial_zD(z)=D(z)(Q\partial_z \cM_0(z)Q) D(z),
$$  
$\partial_z T=0$ and using Lemma~\ref{Mplus}, we conclude
 $$
 \partial_z \cM_0(z)= \partial_z \cM_1(z) =\Gamma_{-1+}^0.
 $$ 
Hence
 $$
 \|\partial_zD(z)\|\les z^{-1+}.
 $$
Similarly, using the   bound  
$$
\mathcal M_1(z)= z M_1+\Gamma^2_{1+}, 
$$ 
from Lemma~\ref{Mplus}, 
which requires $|v(x)|\les \la x\ra^{-\tfrac52-}$, we obtain by a Neumann series expansion
\be\label{eq:Dexp} D(z)=D_0+z\Gamma+\Gamma^2_{1+}.
\ee
In particular, 
$$
\|\partial_{zz}D(z)\|\les z^{-1+}.
$$
Using these bounds in the definition of $h(z)$ we have
\be \label{hnew3}
h(z)=g(z)+\text{Tr}(P\cM_0(z) P-P\cM_0(z) Q   D(z)   Q  \cM_0(z) P)=\frac{im\| (a+b,c+d)^T\|_2^{ 2}}{2z}+c_0+O_1(z^{0+}),
\ee
provided that $|v(x)|\les \la x\ra^{-\tfrac32-}$ (by Lemma~\ref{Mplus} and the bounds for $QD(z)Q$ obtained above). 
Therefore, we see that 
\be\label{hnew2}
d(z)=\frac1{h(z)}P= \big[c_P z +c_2z^2+O_1(z^{2+})\bigg]P. 
\ee
In fact the error term  can be improved to $c_3z^3+O_2(z^{3+})$   if $|v(x)|\les \la x\ra^{-\tfrac52-}$ using \eqref{eq:Dexp} and the expansion for $\cM_0=T+\cM_1$ from Lemma~\ref{Mplus} with $k=1$ in \eqref{hnew}.  The exact values of unspecified constants are unimportant for our analysis.

Using these in Feshbach formula, we have
\begin{multline*}
M^{-1}(z)=     Q   D(z)    Q + \\
 \frac1{h(z)} \big(P-  P \cM_0(z) Q   D(z)   Q  -   Q   D(z)  Q\cM_0(z)  P +   Q   D(z)   Q\cM_0(z) P \cM_0(z)   Q  D(z)  Q\big)\\
=  \frac1{h(z)} P +z  \Lambda_1(z)Q+z Q\Lambda_2(z)+Q\Lambda_3(z)Q.
\end{multline*}
Here each $\Lambda_j$ satisfies the same bounds as $QD(z)Q$ obtained above, in particular \eqref{Lambdat12} and \eqref{Lambdat32},  and as before $\Lambda_1$, $\Lambda_2$ have one extra power of $z$ next to them coming from $\frac1{h(z)}$. Finally, using the expansion for $\frac1h$ used in \eqref{hnew2}
$$
\Lambda_0(z)=\frac1{z^2}( c_Pz + O_2(z^{1+}))P,$$ 
and hence $z\Lambda_0(z)= (c_P+O_2(z^{0+}))P$ and it  satisfies \eqref{Lambdat12}. 
In the case $v$ has more decay, we can write 
$$
\frac1{h(z)} P =c_P z P+ (c_2z^2+c_3z^3+O_2(z^{3+}))P=c_PzP +  z^2\Lambda_0(z),
$$
where $\Lambda_0(z)=(c_0 +c_1z +O_2(z^{1+}))P$ satisfies \eqref{Lambdat32}.
 \end{proof}

These expansions suffice to prove low energy dispersive bounds when the threshold is regular.  When the threshold is not regular, we develop  the following expansions.

\begin{prop}\label{MinversenewS_1} Assume that $m$ is not a regular point of the spectrum,  and $|v(x)|\les \la x\ra^{-\tfrac52-}$. Then, there exists $z_0>0$ so that   for all $0<|z|<z_0$, $M(z)$ is invertible on $L^2 $ and
$$
M^{-1}(z)=     z^2\Lambda_0(z)+ z \Lambda_1(z)Q+zQ\Lambda_2(z) +Q\Lambda_3(z)Q,
$$
where   each $\Lambda_j$   is an absolutely bounded operator satisfying the bounds \eqref{Lambdat12}. 
 
Furthermore, if $|v(x)|\les \la x\ra^{-\tfrac92-}$, we have
\be \label{Minvtemp2}
M^{-1}(z)= \frac{c_{-1}}{z} S_1+ S_1\Gamma P  +P\Gamma S_1+  c_1z  P   
 + z^2\Lambda_0(z)+  zQ\Lambda_1(z)+z \Lambda_2(z)Q+Q\Lambda_3(z)Q,
\ee
where  each $\Lambda_j$ satisfies \eqref{Lambdat32}.  
\end{prop}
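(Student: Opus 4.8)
The plan is to run the Feshbach/Schur inversion of Proposition~\ref{Minversenew} twice, so as to separate the two sources of singular behaviour of $M^{-1}(z)$ as $z\to0$: the $z^{-1}$ pole of $g(z)P$ carried by $PL^2$, and the new $z^{-1}$ singularity produced by the threshold resonance, carried by $S_1L^2$. I work throughout with the orthogonal decomposition $L^2=(Q-S_1)L^2\oplus S_1L^2\oplus PL^2$, and begin by recording the algebraic identities that make the scheme close: since $QTQ$ annihilates $S_1L^2$ and $T,Q,S_1,P$ are self-adjoint, $QTS_1=0$, hence $TS_1=PTS_1$, $S_1T=S_1TP$, $S_1TS_1=0$, and $(Q-S_1)TS_1=0=S_1T(Q-S_1)$; moreover $PL^2$ and, in the non-regular case, $S_1L^2$ are both exactly one-dimensional, so every operator of the form $P(\cdot)P$ or $S_1(\cdot)S_1$ is a scalar multiple of $P$ or $S_1$. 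I also use $\mathcal M_0(z)=T+\mathcal M_1(z)$ with $\mathcal M_1(z)=\sum_{j=1}^k z^jM_j+\Gamma^2_{k+}$ for $k$ as large as the decay of $v$ allows (Lemma~\ref{Mplus}), and the absolute boundedness of $D_0$ (Definition~\ref{def:regular}).

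First I invert $M(z)$ on the ``regular'' subspace $(Q-S_1)L^2$. Because $(Q-S_1)P=0$, its $(Q-S_1)$-block is $(Q-S_1)\mathcal M_0(z)(Q-S_1)=(Q-S_1)T(Q-S_1)+(Q-S_1)\mathcal M_1(z)(Q-S_1)$, and $(Q-S_1)T(Q-S_1)$ is invertible on $(Q-S_1)L^2$ with absolutely bounded inverse; a Neumann series then yields an absolutely bounded $\widetilde D(z):=[(Q-S_1)\mathcal M_0(z)(Q-S_1)]^{-1}=(Q-S_1)D_0(Q-S_1)+z\Gamma+\cdots+\Gamma^2_{k+}$ for small $z$. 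Feshbach elimination of this block leaves a $2\times2$ operator on $S_1L^2\oplus PL^2$ whose diagonal entries are, using the identities above, $h(z)P$ on $PL^2$ with $h(z)=g(z)+O(1)=\tfrac{im\|(a+b,c+d)^T\|_{L^2}^{2}}{2z}+O(1)$, and $S_1\mathcal M_0(z)S_1-S_1\mathcal M_1(z)(Q-S_1)\widetilde D(z)(Q-S_1)\mathcal M_1(z)S_1=z\,S_1M_1S_1+O(z^{1+})$ on $S_1L^2$ (the leading $S_1TS_1$ vanishes and the coupling is $O(z^2)$, which needs $|v|\les\la x\ra^{-5/2-}$), while the off-diagonal entries are $S_1TP+O(z)$ and $PTS_1+O(z)$.

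Next I invert this $2\times2$ block by a second Schur reduction, eliminating $PL^2$. Since $h(z)\sim z^{-1}$, $(h(z)P)^{-1}=h(z)^{-1}P=c_PzP+O(z^2)$, so the Schur complement onto $S_1L^2$ is
$$
\sigma(z)=z\big[\,S_1M_1S_1-c_P\,S_1\,TPT\,S_1+O(z^{0+})\,\big],
$$
and its bracket equals, at $z=0$, the scalar operator $\la(M_1-c_P\,TPT)\phi_0,\phi_0\ra S_1$ with $\phi_0$ a unit spanning vector of $S_1L^2$; this scalar is non-zero, since its vanishing would force the canonical resonance function into $L^2$, i.e.\ into the point spectrum, which is excluded by the hypothesis and the classification of Section~\ref{sec:spec}. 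Hence $\sigma(z)^{-1}=\tfrac{c_{-1}}{z}S_1+(\text{lower order})$. Reassembling the two Feshbach formulas: the $S_1L^2$-block of $M^{-1}(z)$ is $\sigma(z)^{-1}$; the $S_1L^2$–$PL^2$ and $PL^2$–$S_1L^2$ blocks are $-\sigma(z)^{-1}(S_1TP+\cdots)h(z)^{-1}P$ and its adjoint, rank-one maps with $z$-independent leading parts $S_1\Gamma P$ and $P\Gamma S_1$; the $PL^2$-block is $h(z)^{-1}P$ plus a $P(\cdot)P$-correction, which by one-dimensionality is a scalar multiple of $P$, so it equals $c_1zP+O(z^2)P$; and the $(Q-S_1)L^2$ contributions come back through $\widetilde D(z)$ and the couplings. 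Every sub-leading piece has its range or domain inside $QL^2$ (recall $S_1\le Q$), hence fits into one of $z^2\Lambda_0(z)$, $zQ\Lambda_1(z)$, $z\Lambda_2(z)Q$, $Q\Lambda_3(z)Q$. For the first expansion one needs to control only one $z$-derivative, and the elementary computation $\partial_z(z\cdot\sigma(z)^{-1})=\sigma(z)^{-1}+z\,\partial_z\sigma(z)^{-1}=O(z^{-1+})\in L^1_z$ (the $z^{-1}$ parts cancel) shows the resonance pole need not be displayed and \eqref{Lambdat12} holds with $\delta>\tfrac52$. For \eqref{Minvtemp2} one carries two more orders, $\mathcal M_1(z)=zM_1+z^2M_2+z^3M_3+\Gamma^2_{3+}$, which is exactly what $\delta>\tfrac92$ provides (Lemma~\ref{Mplus} with $k=3$); after extracting $\tfrac{c_{-1}}{z}S_1$, $S_1\Gamma P$, $P\Gamma S_1$, $c_1zP$ — the terms whose second $z$-derivative is not integrable near $0$ — each remaining operator is $\Gamma_0^2$-controlled, giving $\Lambda_j$ with $\partial_z^k\Lambda_j\in L^1_z$ for $k\le2$.

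The main obstacle is the bookkeeping in the second Schur reduction: one must verify that the two $z^{-1}$ poles, of $h(z)$ and of $\sigma(z)^{-1}$, couple through $S_1TP$ and $PTS_1$ so as to produce only the displayed $O(1)$ cross terms and the scalar $zP$ and $z^{-1}S_1$ terms — in particular that $h(z)^{-1}S_1TPTS_1$ is exactly of order $z$, perturbing the $z\,S_1M_1S_1$ coefficient rather than the $z^{-1}$ or $z^0$ orders — and that, once the four displayed terms are removed, every remaining operator in each of the three blocks has two $z$-derivatives bounded by an integrable power of $z$. It is these regularity demands, and not any new invertibility issue, that pin down the decay exponents $\tfrac52$ and $\tfrac92$.
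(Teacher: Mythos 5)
Your argument follows the paper's proof in all essentials: you eliminate $(Q-S_1)L^2$ by a Feshbach step exactly as the paper does with $Q_1=Q-S_1$, and your second Schur reduction on $S_1L^2\oplus PL^2$ is just the paper's explicit $2\times 2$ inversion \eqref{eq:BB-1} rewritten as another Schur complement: the leading coefficient of your $\sigma(z)$, namely $\la (M_1-c_P\,TPT)\phi_0,\phi_0\ra=\mathrm{Tr}(S_1M_1S_1)+\tfrac{2i}{m}|\kappa_0|^2$, is (up to the factor $h$) the paper's determinant $hk-|\ell|^2$ in \eqref{eq:det}. The order bookkeeping, the use of $S_1TQ_1=0$ so that the $S_1$--$Q_1$ coupling is $O(z^2)$, the placement of the remainders into the slots $z^2\Lambda_0$, $zQ\Lambda_1$, $z\Lambda_2Q$, $Q\Lambda_3Q$, and the decay thresholds $\delta>\tfrac52$ (one derivative, $k=1$ in Lemma~\ref{Mplus}) and $\delta>\tfrac92$ (two derivatives, $k=3$) all match the paper's proof.

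The one step that does not stand on its own as written is the crucial non-degeneracy claim. You assert $\la(M_1-c_P\,TPT)\phi_0,\phi_0\ra\neq 0$ ``since its vanishing would force the canonical resonance function into $L^2$,'' but that implication is exactly what must be proved; it does not follow from the hypothesis of non-regularity together with the bare statement $\psi\notin L^2$ in Lemma~\ref{lem:S1 char}. The actual mechanism, which is the content of Lemma~\ref{prop:B1 inv}, is a computation: the scalar equals, up to a nonzero constant, $|\kappa_0|^2+\big|-\tfrac{\kappa_1}{2}+\tfrac{m}{2}(u_1+u_2)\big|^2$, so its vanishing forces both terms to vanish, hence the constant in the asymptotic representation \eqref{eqn:psi for B1} of $\psi$ vanishes, and a Volterra argument then gives $\psi\equiv 0$, i.e.\ $S_1=0$, contradicting non-regularity (not merely $\psi\in L^2$). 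If at this point you invoke Lemma~\ref{prop:B1 inv} (or reproduce its sum-of-squares identity), your proof is complete and coincides with the paper's; without it, the invertibility of $M(z)$ in the non-regular case has not been established.
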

\begin{proof} 
We start with the first assertion. 
	Let $S=P+S_1$ and $Q_1=Q-S_1$.  Note that, since $T$ is self adjoint and $S_1$ is the Riesz projection onto its kernel,  $Q_1TQ_1$ is invertible. In addition, $D_1=(Q_1TQ_1)^{-1}$ is absolutely bounded on $Q_1L^2$.  This is seen by noting that the resolvent identity and  $Q_1S_1=0$ yield 
	$Q_1D_0Q_1=Q_1D_1Q_1$. Using $Q=Q_1+S_1$ and $S_1D_0=D_0S_1=S_1$, we see that $Q_1D_0Q_1=QD_0Q-S_1$.  Combining these we see that $D_1=Q_1D_1Q_1=QD_0Q-S_1$, so that $D_1$ is the difference of two absolutely bounded operators.
Note that $Q_1M(z)Q_1=Q_1\cM_0(z)Q_1=Q_1TQ_1+\Gamma^1_{0+}$ is invertible on $Q_1L^2$ and $D(z)= (Q_1\cM_0(z)Q_1)^{-1}$ satisfies the same bounds as $D(z)$ in the proof of Proposition~\ref{Minversenew}. In fact, using the  decay of $v$ we have 
$$D(z)=D_0+z\Gamma+ \Gamma^2_{1+}.
$$
We write 
$$
M(z)=g(z)P+\cM_0(z) = \left[\begin{array}{cc} SM(z)S&   S \cM_0(z) Q_1\\  Q_1\cM_0(z) S &   Q_1 \cM_0(z)  Q_1 \end{array} \right]. 
$$ 
We claim  that 
 $B(z):= a_{11}-a_{12}a_{22}^{-1}a_{21}=S[M(z) -\cM_0(z) Q_1   D(z)   Q_1  \cM_0(z) ]S $ is invertible on $SL^2$ for small but nonzero $z$ and denote its inverse by $d(z)$. 
Therefore by the  Feshbach formula, we have
\begin{multline}\label{Minvtemp1}
M^{-1}(z)=    Sd(z)S+  Q_1   D(z)    Q_1 +Q_1D(z)Q_1\cM_0(z)Sd(z)S\cM_0(z)Q_1D(z)Q_1 \\
- S d(z)S\cM_0(z)Q_1D(z)Q_1- Q_1D(z)Q_1\cM_0(z)Sd(z)S.
\end{multline}
We now  prove that $B$ is invertible as claimed. Recalling that 
$$S_1,Q_1\leq Q,\,\,\,S_1TQ=S_1Q_1=S_1P=PQ=0, \,\,\, M(z)=g(z)P+\cM_0(z),\, \,\,  \cM_0(z) =T+\cM_1(z),
$$  and dropping $z$ dependence, we write
\begin{multline}\label{eqn:B matrix def}
B  = S[M(z) -\cM_0(z) Q_1   D(z)   Q_1  \cM_0(z) ]S\\= \left[\begin{array}{cc} S_1[\cM_1 -\cM_1 Q_1   D   Q_1  \cM_1 ]S_1&   S_1[T+\cM_1  -\cM_1 Q_1   D    Q_1  \cM_0  ]P\\ P[T+\cM_1  -\cM_0 Q_1   D   Q_1  \cM_1  ]S_1 & h  P  \end{array} \right]\\=
\left[\begin{array}{cc} zS_1M_1S_1   +O_2(z^{1+})S_1 &   S_1[T+z\Gamma^*  +O_2(z^{1+}) ]P\\ P[T+z\Gamma + O_2(z^{1+}) ]S_1 & h  P  \end{array} \right]
, 
\end{multline} 
where $h$ is as in \eqref{hnew} with $Q$ replaced with $Q_1$; it satisfies the same expansions as before. 
Since $PL^2$ and $S_1L^2$ are one dimensional subspaces, see Corollary~\ref{cor:S1} below,  we can choose unit $\phi\in  S_1L^2$ and $\theta=\|(a+b,c+d)^T\|_2^{-1} v(1,1)^T\in PL^2$, and writing  $B$ with respect to the basis $\{\phi,\theta\}$ we have  
\be\label{eq:BB-1}
B=\left[\begin{array}{cc} k &   \overline{\ell}\\ \ell & h   \end{array} \right],\,\,\,B^{-1}=\frac1{hk-|\ell|^2}\left[\begin{array}{cc} h &   -\overline{\ell}\\ -{\ell} & k   \end{array} \right],
\ee
where 
\be\label{eq:k} k 
=z\text{Tr}(S_1M_1S_1)+ O_2(z^{1+}),\ee
\be\label{eq:ell}
\ell=
\la T\phi, \theta\ra+c_1z +O_2(z^{1+})=\kappa_0\|(a+b,c+d)^T\|_2+c_1z +O_2(z^{1+}),
\ee
and $\kappa_0$ is as in Lemma~\ref{lem:S1 char}. Hence 
\be\label{eq:ell2}
|\ell|^2=\text{Tr}(S_1TPTS_1)+c_1z+ O_2(z^{1+})=|\kappa_0|^2\|(a+b,c+d)^T\|_2^2+c_1z+ O_2(z^{1+}).
\ee
Therefore,
\be\label{eq:det}
hk-|\ell|^2= \frac{im\| (a+b,c+d)^T\|_2^{ 2}}{2 }\big( \text{Tr}(S_1M_1S_1) +\frac{2i}{m}|\kappa_0|^2\big)
+ O_2(z^{0+}).
\ee
The leading term is nonzero by Lemma~\ref{prop:B1 inv} below, and hence $hk-|\ell|^2\neq 0$ for small $z$.
Therefore, we obtain
$$
d= B^{-1}=(\frac{c_{-1}}{z}+ O_2(z^{-1+})) S_1 +( c_1z + O_2(z^{1+}))P\\ + (c_0+ O_2(z^{0+}) ) S_1\Gamma P+(c_0+ O_2(z^{0+})) P \Gamma  S_1.
$$
Using this in \eqref{Minvtemp1} and noting that 
\be\label{temporth}
S_1\leq Q,\,\,\,\,  Q_1\cM_0S_1= Q_1\cM_1S_1 =Q_1 [z\Gamma +O_2(z^{1+})]S_1,
\ee
we obtain
$$
M^{-1}(z)= z^2\Lambda_0(z)
 +   zQ\Lambda_1(z)+z \Lambda_2(z)Q+Q\Lambda_3(z)Q,
$$ 
where $\Lambda_j$'s satisfy the bounds in \eqref{Lambdat12}. Indeed, $\Lambda_0(z)=\frac1{z^2}( c_1z + O_2(z^{1+}))P$, and hence $z\Lambda_0(z)= (c_1+O_2(z^{0+}))P$ and it  satisfies \eqref{Lambdat12}. Similarly, the most singular term of 
$\Lambda_3$ is $(\frac{c_{-1}}{z}+ O_2(z^{-1+})) S_1$, and hence $z\Lambda_3(z)=( c_{-1} + O_2(z^{0+})) S_1+\cdots $ satisfies \eqref{Lambdat12}. The other terms are controlled similarly.

For the second assertion, we follow the same proof expanding  the operators to higher orders of $z$. Using the additional decay of $v$ we have (recall the constants and any operator denoted $\Gamma$ are allowed to change from line to line and even in the same line)
$$D(z)=D_0+z\Gamma+z^2\Gamma+z^3\Gamma+\Gamma^2_{3+}, \text{ and} 
$$
$$\cM_0=T+zM_1+z^2M_2+z^3M_3+O_2(z^{3+}).$$ Using these we have the following expansions for $ k $ and $\ell$ (ignoring the actual form of the leading terms): 
$$k
=c_1z+c_2z^2+c_3z^3+O_2(z^{3+}),$$
$$\ell=
c_0+c_1z+c_2z^2+O_2(z^{2+}),\,\,\, 
|\ell|^2=c_0+c_1z+c_2z^2+O_2(z^{2+}), \text{ and hence} 
$$ 
$$
hk-|\ell|^2= c_0+c_1z+c_2z^2+O_2(z^{2+}).
$$
Therefore, we obtain
\begin{multline*}
d= B^{-1}=(\frac{c_{-1}}{z}+c_0+c_1z+O_2(z^{1+})) S_1 +( c_1z +c_2z^2+c_3z^3+O_2(z^{3+}))P\\ + (c_0+c_1z+c_2z^2+O_2(z^{2+}) ) S_1\Gamma P+(c_0+c_1z+c_2z^2+O_2(z^{2+})) P \Gamma  S_1.
\end{multline*}
Using this and \eqref{temporth} in \eqref{Minvtemp1}, we obtain
$$
M^{-1}(z)= \frac{c_{-1}}{z} S_1 +  S_1\Gamma P  +P\Gamma S_1+   c_1z  P 
 + z^2\Lambda_0(z)+  zQ\Lambda_1(z)+z \Lambda_2(z)Q+Q\Lambda_3(z)Q,
$$
where $\Lambda_j$'s satisfy the bounds $\|\,|\partial_z^k\Lambda_j(z)|\,\|_{L^2\to L^2}\in L^1_z, k=0,1,2.$
\end{proof}

\begin{rmk}\label{rmk:Binv leading}
	
	In fact, to prove the final result in Theorem~\ref{thm:main nonreg}, an exact accounting of the leading terms in $B^{-1}$ is required.  Using the leading terms of  \eqref{hnew}, \eqref{eq:k}, \eqref{eq:ell}, \eqref{eq:ell2}, and \eqref{eq:det} in \eqref{eq:BB-1},
we can write  the leading terms of $B^{-1}$ as
$$
\frac1{\mathcal D}\Big[
\frac{1}{c_Pz }  S_1 -S_1TP-PTS_1+z  \text{Tr}(S_1M_1S_1) P\Big],
$$
where 
$$\mathcal D=\frac1{c_P}\big( \text{Tr}(S_1M_1S_1) +\frac{2i}{m}|\kappa_0|^2\big), \,\,\text{ and}\,\,\,
c_P= \frac{-2i    }{m \| (a+b,c+d)^T\|_2^{ 2} }.$$  
We can rewrite the expansion above as	
$$ c_PzP+ \frac1{\mathcal D}\Big[\frac{1}{c_Pz }S_1-S_1TP-PTS_1-\frac{2i}{m}|\kappa_0|^2zP\Big].
$$
Therefore, we can rewrite \eqref{Minvtemp2} as
\be\label{Minvfinal?}
M^{-1}(z)= c_PzP+ \frac1{\mathcal D}\Big[\frac{1}{c_Pz }S_1-S_1TP-PTS_1-\frac{2i}{m}|\kappa_0|^2zP\Big] + z^2\Lambda_0(z)+  zQ\Lambda_1(z)+z \Lambda_2(z)Q+Q\Lambda_3(z)Q.
\ee
\end{rmk}

\section{Low energy dispersive bounds}\label{sec:low disp}

In this section we prove the low energy bounds.  We begin with the unweighted bound when the threshold energy is regular.  In all cases, we utilize the Stone's formula and extend to the real line as usual to bound
\begin{align}
	e^{-itH}P_{ac}(H)
	=\frac{1}{2\pi i} \int_{\R} e^{-it\sqrt{z^2+m^2}} \frac{z}{\sqrt{z^2+m^2}}  \mR_V^+ (z)(x,y)\, dz.
\end{align}
Then, we appeal to the symmetric resolvent identity \eqref{eqn:symm res id}
and employ Lemma~\ref{lem:vdc}.  After extending to the real line, we omit the `+' on the resolvent operators.
We let $\chi(z)$ be a smooth, even cut-off satisfying $\chi(z)=1$ when $|z|\leq \frac{z_0}{2}$ and $\chi(z)=0$ when $|z|\geq |z_0|$.  Here $z_0$ is the minimum of the constants from Proposition~\ref{Minversenew} and \ref{MinversenewS_1}.

\begin{prop}\label{prop:low disp reg}
	
	Assuming that $|v(x)|\les \la x \ra^{-\delta}$ for some $\delta>\f32$, if the threshold energies are regular, we have the following bound
	$$
		\|e^{-itH}P_{ac}(H)\chi(H)\|_{L^1\to L^\infty} \les \la t\ra^{-\f12}.
	$$
	Further, if $\delta>\f52$ we have (for $|t|>1$)
	$$
		\big|[e^{-itH}P_{ac}(H)\chi(H)](x,y)\big|  \les | t|^{-\f12} \min(1, | t|^{-1} \la x \ra \la y \ra ).
	$$
	
\end{prop}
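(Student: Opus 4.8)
The plan is to insert the perturbed resolvent expansion into the Stone formula and reduce everything to oscillatory integral estimates of the form handled by Lemma~\ref{lem:vdc}. Concretely, write
$$
e^{-itH}P_{ac}(H)\chi(H) = \frac{1}{2\pi i}\int_\R e^{-it\sqrt{z^2+m^2}}\frac{z}{\sqrt{z^2+m^2}}\chi(z)\,\mR_V^+(z)(x,y)\,dz,
$$
and use the symmetric resolvent identity \eqref{eqn:symm res id} to split $\mR_V^+$ into the free term $\mR_0^+(z)$ and the correction term $-\mR_0^+(z)v^*[M^+(z)]^{-1}v\mR_0^+(z)$. The free piece is already controlled by the low-energy part of Theorem~\ref{thm:giggles}, which gives both $\la t\ra^{-1/2}$ and the weighted $|t|^{-3/2}\la x-y\ra$ bound; note $\la x-y\ra\les \la x\ra\la y\ra$, so after splitting off $F^0_t$ (whose $L^1\to L^\infty$ norm is $O(|t|^{-1/2})$ by stationary phase and which also obeys the weighted bound since $(\beta+I)$ is killed by nothing here — actually one checks $F^0_t$ itself satisfies $|F^0_t(x,y)|\les |t|^{-1/2}$ and the remainder is $|t|^{-3/2}\la x-y\ra$) the free contribution is of the desired form.

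For the correction term I would use Proposition~\ref{Minversenew}: with $\delta>\tfrac32$ we have $M^{-1}(z)=z^2\Lambda_0(z)+z\Lambda_1(z)Q+zQ\Lambda_2(z)+Q\Lambda_3(z)Q$ with the $L^1_z$ bounds \eqref{Lambdat12}, and with $\delta>\tfrac52$ the improved bounds \eqref{Lambdat32} together with the extra $c_P zP$ term. The key structural point is that in the full expression $\frac{z}{\sqrt{z^2+m^2}}\chi(z)\mR_0(z)v^*[M^{-1}(z)]v\mR_0(z)$ the most singular parts of $\mR_0(z)$, namely the $\frac{im}{2z}(\beta+I)$ terms from Lemma~\ref{lem:R0exp}, when hitting $v^*$ on the left and $v$ on the right produce exactly the projection $P$ up to constants; and $Q P=PQ=0$ kills the $Q$-flanked pieces of $M^{-1}$, while the $\frac{z}{\sqrt{z^2+m^2}}$ factor and the $z^2\Lambda_0$, $z\Lambda_1$, etc. supply enough powers of $z$ to cancel the $1/z$ singularities. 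So after bookkeeping each term has the schematic form
$$
\int_\R e^{-it\sqrt{z^2+m^2}}\, a(z)\, \big[v\mR_0(z)\big]^*(x,\cdot)\,\Lambda(z)\,\big[v\mR_0(z)\big](\cdot,y)\,dz
$$
with $a(z)$ a smooth bounded weight (carrying the compensating $z$ powers), $\Lambda(z)$ absolutely bounded with $L^1_z$ derivatives, and $v\mR_0(z)$ enjoying the pointwise/derivative bounds of Lemma~\ref{lem:R0exp} weighted by $\la x\ra^{-\delta}$. I then apply Lemma~\ref{lem:vdc}: the $|t|^{-1/2}$ bound comes from one integration by parts, putting the $z$-derivative on the integrand and using that all factors have $L^1_z$ (in $z$, with the hidden Hilbert--Schmidt norms integrated against the $L^2_x$ kernels of $v$) first derivatives; the $|t|^{-3/2}\la x\ra\la y\ra$ bound comes from a second integration by parts, where the $\la x-y\ra$ type growth from $\partial_z$ hitting $e^{iz|x-y_1|}e^{iz|x-y_2|}$ in the two free resolvents is absorbed by $\la x\ra\la y\ra$ after using the decay of $v$ (here $\delta>\tfrac52$ is exactly what is needed so that $\la\cdot\ra v$ is still square-integrable enough for the second-derivative terms, and \eqref{Lambdat32} guarantees $\partial_z^2\Lambda_j\in L^1_z$).

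The main obstacle I expect is the careful bookkeeping in the correction term: one must track which combination of singular/regular parts of the two $\mR_0$ factors multiplies which of the four block-pieces of $M^{-1}(z)$, verify that every $z^{-1}$ (and $z^{-2}$ from $g(z)^{-2}$-type contributions hidden in $\Lambda_0$) is compensated, and confirm that after the integrations by parts no term carries more than $\la x\ra\la y\ra$ spatial growth. In particular the $P$-flanked term $c_P z P$ must be handled together with the leading $\frac{im}{2z}(\beta+I)$ parts of both resolvents — this is the analogue of the $F^0_t$ mechanism and is where the delicate cancellation of the apparent $z^{-1}$ singularity occurs; one writes $\frac{z}{\sqrt{z^2+m^2}}\cdot\frac{1}{z}\cdot (c_Pz)\cdot\frac1z = O(1)$ smooth, so it is in fact harmless, but it must be isolated rather than bounded crudely. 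All the remaining terms have strictly more powers of $z$ than needed and so are strictly easier. Once the reduction is in place, each oscillatory integral is dispatched by Lemma~\ref{lem:vdc} exactly as in the free case, and the $\min(1,|t|^{-1}\la x\ra\la y\ra)$ form follows by taking the better of the two bounds.
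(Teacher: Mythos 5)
Your outline follows the paper's route (Stone's formula, symmetric resolvent identity, the $M^{-1}(z)$ expansions of Proposition~\ref{Minversenew}, and Van der Corput via Lemma~\ref{lem:vdc}), and the unweighted $\la t\ra^{-\f12}$ bound would go through essentially as you describe. But there is a genuine gap in your treatment of the weighted $|t|^{-\f32}\la x\ra\la y\ra$ bound. You assert that after splitting off $F^0_t$ ``the free contribution is of the desired form,'' and separately that the $c_PzP$ term is ``harmless'' because the power counting $\frac{z}{\sqrt{z^2+m^2}}\cdot\frac1z\cdot(c_Pz)\cdot\frac1z=O(1)$ leaves a smooth bounded amplitude. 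Neither statement is true for the weighted estimate: an amplitude that is merely $O(1)$ but nonvanishing at $z=0$ does not admit the second integration by parts in Lemma~\ref{lem:vdc} (dividing by $z$ creates a non-integrable singularity), so both the free piece $F^0_t$ and the singular-singular part of $\mR_0 v^*(c_PzP)v\mR_0$ are genuinely only $O(|t|^{-\f12})$ with no spatial gain (see Lemma~\ref{lem:Ft}: the leading term of $F^0_t$ is of size $|t|^{-\f12}$ uniformly for $|x|,|y|\ll t$). Since the proposition's second bound contains no $F_t$-type remainder, these two pieces must cancel each other, and the paper's proof hinges on exactly this: one computes $\int(\beta+I)v^*Pv(\beta+I)=\|(a+b,c+d)\|_2^2(\beta+I)$, uses $c_P=\frac{-2i}{m\|(a+b,c+d)^T\|_2^2}$, and finds that the $c_PzP$ contribution equals $F_t^0+O(|t|^{-\f32}\la x\ra\la y\ra)$ (Lemma~\ref{lem:S disp ests}), which, with the minus sign of \eqref{eqn:symm res id}, cancels the $F^0_t$ produced by Theorem~\ref{thm:giggles}. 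Your proposal never identifies this cancellation; as written it would only yield the decomposition ``$O(|t|^{-\f12})$ piece plus weighted piece,'' which is the non-regular-threshold statement, not the claimed bound.

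A secondary imprecision: the $Q$-flanked terms of $M^{-1}$ are not ``killed'' by $QP=PQ=0$. The singular factors $\frac{im}{2z}(\beta+I)e^{iz|x-x_1|}v^*(x_1)$ carry an $x_1$-dependent phase, so the orthogonality $Qv(\beta+I)=(\beta+I)v^*Q=0$ only annihilates the phase-free part; one must subtract $e^{iz|x|}$ and write $e^{iz|x-x_1|}-e^{iz|x|}=izF(z,x,x_1)$ as in \eqref{eqn:F defn} to gain the needed power of $z$ (this is required even for the unweighted bound on the $Q\Lambda_3Q$ term, whose expansion carries no explicit factor of $z$). That gain costs factors of $\la x_1\ra$, $\la y_1\ra$ (and $\la x\ra\la y\ra$ for the second derivative), which is precisely where the hypotheses $\delta>\f32$ and $\delta>\f52$ enter; your bookkeeping attributes the compensation to the explicit $z$-powers in the expansion, which is insufficient for that term.
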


We note that the leading term in the symmetric resolvent identity \eqref{eqn:symm res id} involving only the free resolvent may be controlled by Theorem~\ref{thm:giggles}.  To understand the contribution of the second term in \eqref{eqn:symm res id}, we note the expansions of $M^{-1}(z)$ in Proposition~\ref{Minversenew}.  Proposition~\ref{prop:low disp reg} suffices to establish the bounds of Theorem~\ref{thm:main nonreg} when $\tau=0$ and $\tau=1$.  The full range follows from interpolation.

The following proposition suffices to prove the first bound in Proposition~\ref{prop:low disp reg}, and takes care of the contribution of all terms in the second and third expansions in Proposition~\ref{Minversenew} needed for the second bound, except $c_PzP$: 
\begin{prop}\label{prop:Q} Assume that $\Lambda(z)$ is an absolutely bounded operator satisfying for $0<|z|<z_0$ the bounds \eqref{Lambdat12}, i.e.,
$$
\|\, |z\Lambda(z)|\,\|_{L^2\to L^2}\in L^1_z, \,\,   \|\,|\partial_z(z\Lambda(z))|\,\|_{L^2\to L^2}\in L^1_z.
$$ 
Then
$$
\bigg| \int_{\R} e^{-it\sqrt{z^2+m^2}} \frac{z\chi(z)}{\sqrt{z^2+m^2}} [\mR_0 (z)  vQ\Lambda(z)Qv^* \mR_0 (z)](x,y)\, dz \bigg| \les \la t\ra^{-\f12}.
$$
provided that $|v(x)|\les \la x\ra^{-\frac32-}.$
Moreover, if 	$\Lambda(z)$ satisfies the bounds \eqref{Lambdat32}, i.e.,   $\|\,|\partial_z^k\Lambda(z)|\,\|_{L^2\to L^2}\in L^1(z)$ for $k=0,1,2$,     and $|v(x)|\les \la x\ra^{-\frac52-}$, then, for $|t|>1$, the integral above is bounded by $|t|^{-\f12} \min (1, | t|^{-1}\la x\ra \la y \ra ) .$

Finally, the claims above remain valid under the same conditions on $\Lambda$ and $v$ if we replace $Q\Lambda(z)Q$ with $z\Lambda(z)Q$, or $zQ\Lambda(z)$, or $z^2\Lambda(z)$. 
\end{prop}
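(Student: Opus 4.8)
The plan is to insert the small‑$z$ expansion of $\mR_0$ from Lemma~\ref{lem:R0exp} into the integral, use the projection $Q$ to reduce the operator $\mR_0(z)v^*[\,\cdot\,]v\mR_0(z)$ to a short list of scalar oscillatory integrals, and estimate each with Lemma~\ref{lem:vdc}. It is enough to treat $Q\Lambda(z)Q$; the variants $z\Lambda(z)Q$, $zQ\Lambda(z)$, $z^2\Lambda(z)$ are easier and are dealt with at the end. Write $\mR_0(z)(x,w)=\frac{im}{2z}(\beta+I)+\mathcal G(z)(x,w)$, with $\mathcal G(z):=\mR_0(z)-\frac{im}{2z}(\beta+I)$ the regular part of Lemma~\ref{lem:R0exp}. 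By \eqref{eq:g defn} the range of the operator $v(\beta+I)$ lies in the span of $(a+b,c+d)^T$, hence in $PL^2$, so $Qv(\beta+I)=0$ and, by self-adjointness, $(\beta+I)v^*Q=0$; consequently every term of $\mR_0v^*Q\Lambda(z)Qv\mR_0$ in which the constant matrix $\frac{im}{2z}(\beta+I)$ stands next to a projection vanishes, and $\mR_0$ may be replaced by $\mathcal G$ on both sides. This removes the $z^{-1}$ singularity: absorbing the prefactor $z/\sqrt{z^2+m^2}$ into $\Lambda$ leaves $zQ\Lambda(z)Q$, which by \eqref{Lambdat12} is bounded uniformly in $z$ and has $z$-derivative in $L^1_z$.

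The remaining obstruction is spatial growth. By \eqref{resolventex}, $\mathcal G(z)(x,w)=e^{iz|x-w|}\widetilde A(z,x,w)+\frac{im}{2z}(\beta+I)\big(e^{iz|x-w|}-1\big)$, where $\widetilde A(z,x,w)=\frac i2\big(-\alpha\,\textrm{sgn}(x-w)+\frac{z}{2m}I+O_2(z^3)I\big)$ is bounded uniformly in $x,w,z$, while the second summand is only $O(\la x-w\ra)$. I would tame the latter by the same device: it too has the form (scalar)$\times(\beta+I)v^*(w)$, so I may subtract off the $w$-independent quantity $\frac{im}{2z}(\beta+I)\big(e^{iz|x|}-1\big)v^*(w)$ --- legitimate because $Q$ annihilates it --- which replaces it by $\frac{im}{2z}\big(e^{iz|x-w|}-e^{iz|x|}\big)(\beta+I)v^*(w)$, of size $\les\big|\,|x-w|-|x|\,\big|\,|v^*(w)|\les\la w\ra^{1-\delta}$, uniformly in $x$; the symmetric subtraction with $e^{iz|y|}$ is used on the $y$-side. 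Thus, modulo projection-killed terms, $\mR_0(z)(x,w)v^*(w)$ is a ``bounded'' piece $e^{iz|x-w|}\ell_b(z,x,w)$ with $|\ell_b|,|\partial_z\ell_b|\les\la w\ra^{-\delta}$, plus a ``resonant'' piece which, after peeling the phase $e^{iz|x|}$, has amplitude $\ell_a(z,x,w)=\frac{im}{2z}\big(e^{iz(|x-w|-|x|)}-1\big)(\beta+I)v^*(w)$ obeying $|\ell_a|\les|w|\,\la w\ra^{-\delta}$ and $|\partial_z\ell_a|\les\min\big(|w|^2,|w|/|z|\big)\la w\ra^{-\delta}$, all uniformly in $x$; an identical description holds for $v(w')\mR_0(z)(w',y)$.

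For each product of a left piece with a right piece I would combine the two peeled linear-in-$z$ phases with $e^{-it\sqrt{z^2+m^2}}$ and apply Lemma~\ref{lem:vdc} in the inner $z$-integral (with $r$ the relevant sum of distances), then integrate in $w,w'$. For the unweighted bound the second estimate of Lemma~\ref{lem:vdc} gives $|t|^{-1/2}\|\partial_z\psi\|_{L^1_z}$; passing to matrix-kernel moduli, Cauchy--Schwarz in $(w,w')$ and the absolute boundedness of $Q$ and $\Lambda(z)$ bound this by $\int_{|z|<z_0}\big(\|\ell\|_{L^2_w}+\|\partial_z\ell\|_{L^2_w}\big)\big(\|\,|zQ\Lambda(z)Q|\,\|+\|\,|\partial_z(zQ\Lambda(z)Q)|\,\|\big)\big(\|\rho\|_{L^2_{w'}}+\|\partial_z\rho\|_{L^2_{w'}}\big)\,dz$, with $\rho$ the analogous $y$-side amplitude. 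Here $\|\ell_b\|_{L^2_w},\|\partial_z\ell_b\|_{L^2_w}$ (and the $y$-analogues) are $\les1$ once $\delta>\frac12$, and a short computation gives $\|\ell_a(z,x,\cdot)\|_{L^2_w}\les1$ and $\|\partial_z\ell_a(z,x,\cdot)\|_{L^2_w}\les|z|^{\min(\delta-5/2,\,0)}$, which is integrable near $z=0$ exactly when $\delta>\frac32$; together with \eqref{Lambdat12} (and the boundedness of $z\mapsto\|\,|zQ\Lambda(z)Q|\,\|$ that it entails) this produces $\les\la t\ra^{-1/2}$, the range $|t|\les1$ coming from the first estimate of Lemma~\ref{lem:vdc}. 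For the weighted bound one uses the third estimate, with its factor $|t|^{-3/2}\la r\ra$: since every peeled $r$ involves only $|x|$ (or $|x-w|$) and $|w'-y|$ (or $|y|$), one has $\la r\ra\les\la x\ra\la y\ra\la w\ra\la w'\ra$, and the two extra weights are absorbed by $v^*(w),v(w')$ at the cost $\delta>\frac52$; two $z$-derivatives now fall on the amplitude, $\|\partial_{zz}\ell_a(z,x,\cdot)\|_{L^2_w}\les|z|^{\min(\delta-7/2,\,0)}$ is integrable near $0$ exactly when $\delta>\frac52$, and the $\Lambda$-derivatives of order $\le2$ are controlled by \eqref{Lambdat32}; this yields $\les|t|^{-3/2}\la x\ra\la y\ra$ for $|t|>1$, hence the stated $|t|^{-1/2}\min\big(1,|t|^{-1}\la x\ra\la y\ra\big)$. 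For $z\Lambda(z)Q$, $zQ\Lambda(z)$ and $z^2\Lambda(z)$ there is a spare power of $z$ beside $\Lambda$: moving it onto the adjacent free resolvent turns $z\mR_0(z)(x,w)$ into $e^{iz|x-w|}\big(\frac{im}{2}(\beta+I)+O_2(z)\big)$, whose amplitude is bounded uniformly in $x,w,z$ with bounded $z$-derivative once $e^{iz|x-w|}$ is peeled, so the subtraction of the previous paragraph is unnecessary on that side, while the side still carrying a $Q$ is treated exactly as above ($z^2\Lambda$ using one spare power on each side).

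The hard part is the interplay in the second paragraph. The summand $\frac{im}{2z}(\beta+I)(e^{iz|x-w|}-1)$ of the free resolvent must simultaneously carry the $z^{-1}$ singularity of $\mR_0$ --- removable only by keeping it attached to $(\beta+I)$ so the projections can act --- and be rendered spatially bounded --- removable only by the $e^{iz|x|}$-subtraction, which is licit precisely because $Q$ annihilates the subtracted constant-in-$w$ multiple of $(\beta+I)v^*$ --- all while staying twice differentiable in $z$ for Van der Corput. Balancing these requirements against the decay of $v$ and the $L^1_z$-smoothness \eqref{Lambdat12}--\eqref{Lambdat32} of $\Lambda$ is exactly what pins down the thresholds $\delta>\frac32$ and $\delta>\frac52$.
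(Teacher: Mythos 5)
Your proof is correct and is essentially the paper's argument: the same splitting of $\mR_0$ into the singular $\frac{im}{2z}(\beta+I)$ part and a regular remainder, the same cancellation of the $1/z$ singularity through the $Q$-orthogonality \eqref{eqn:Q} (amounting to the subtraction $e^{iz|x-x_1|}-e^{iz|x|}$ on each side), Van der Corput via Lemma~\ref{lem:vdc} with the peeled linear phases, and Cauchy--Schwarz in the inner variables together with the absolute boundedness of $\Lambda$, the only (cosmetic) difference being that the paper rewrites the phase difference as $iz\int_{|x|}^{|x-x_1|}e^{izs}\,ds$ and exchanges the order of integration, while you bound the $z$-derivatives of the peeled amplitude $\frac{im}{2z}\big(e^{iz(|x-w|-|x|)}-1\big)v^*(w)$ directly, arriving at the same thresholds $\delta>\tfrac32$ and $\delta>\tfrac52$. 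One small slip worth fixing: your intermediate bound by the triple product of sums contains cross terms carrying two $z$-derivatives (e.g.\ $\|\partial_z\ell_a\|_{L^2_w}\,\|\,|\partial_z(z\Lambda)|\,\|\,\|\partial_z\rho_a\|_{L^2_{w'}}$), which need not be integrable near $z=0$ when $\delta$ is close to $\tfrac32$; the Leibniz rule puts only one derivative per term, and each single-derivative term is integrable by your own estimates, so the conclusion is unaffected.
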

\begin{proof}
We begin by using \eqref{resolventex} to write
\begin{align}\label{eq:R0 R1}
	\mR_0(z)(x,y)=\frac{im}{2z}(\beta+I)e^{iz|x-y|}+e^{iz|x-y|} \mR_{1}(z).
\end{align}
Here $\mR_1(z)$ is the non-singular portion of the free resolvent which satisfies $|\partial_z^k \mR_1(z)(x,y)|\les 1$ for $k=0,1,2$.

We first consider the most singular term involving $(\beta+I)/z$ on both sides.  Ignoring the constants we need to control:
$$	
	\int_{\R^3} e^{-it\sqrt{z^2+m^2}+iz(|x-x_1|+|y-y_1|)} \frac{\chi(z)}{z\sqrt{z^2+m^2}} \big[(\beta+I)   v^*Q\Lambda(z)Qv (\beta+I)\big](x_1,y_1)\, dx_1\,  dy_1 \,  dz.
$$
Noting that $Qv(\beta+I)=(\beta+I)v^*Q=0$, i.e.
	\be \label{eqn:Q}
	\int_{\R}  (\beta+I)v^*(x_1) Q(x_1,x_2)\, dx_1= \int_{\R}  Q(y_2,y_1)v(y_1)(\beta+I)\, dy_1=  0,
	\ee
we can rewrite the   integral as
	$$ 
	\int_{\R^3 }e^{-it\sqrt{z^2+m^2} } \frac{\chi(z)}{z \sqrt{z^2+m^2}} \bigg[e^{iz|x-x_1|} -e^{iz|x|} \bigg]  [(\beta+I) 
	v^* Q\Lambda(z)Q v  (\beta+I)](x_1,y_1) \bigg[ e^{iz|y_1-y|}-e^{iz|y|} \bigg] \, dx_1\, dy_1\, dz.
$$
	Writing
	\begin{align}\label{eqn:F defn}
	e^{iz|x-x_1|} -e^{iz|x|}=iz \int_{|x|}^{|x-x_1|} e^{izs_1}\, ds_1:=iz F(z,x,x_1),
	\end{align}
	and similarly for the second difference of phases, then changing the order of integration, we need to control (with $\widetilde \Lambda(z,x_1,y_1)= [(\beta+I) 
	v^* Q\Lambda(z)Q v  (\beta+I)](x_1,y_1)$)
\begin{align}\label{eqn:Lambda twiddle}
	\int_{\R^2 }\int_{|x|}^{|x-x_1|} \int_{|y|}^{|y-y_1|}	\int_{\R  } e^{-it\sqrt{z^2+m^2}+iz(s_1+s_2) } \frac{z\chi(z)}{  \sqrt{z^2+m^2}}  \widetilde \Lambda(z,x_1,y_1) \, dz\,ds_1\,ds_2  \, dx_1\,dy_1.
\end{align}
We use Lemma~\ref{lem:vdc} (for $j=0$) in the $z$ integral with $\psi_0(z)=\frac{z\chi(z)}{  \sqrt{z^2+m^2}}  \widetilde \Lambda(z,x_1,y_1)$, to obtain the bound
\be\label{minfromvdc}
|\eqref{eqn:Lambda twiddle}|
\les\min\Big(\| z \widetilde \Lambda\|_{L^1_{|z|\ll1}},|t|^{-\f12}\big\|\partial_z(z   \widetilde \Lambda(z))\big\|_{L^1_{|z|\ll 1}}, |t|^{-\f32} \big\| 
|\widetilde \Lambda_{zz}|+(s_1+s_2) |\widetilde \Lambda_z|+| \widetilde \Lambda| \big\|_{L^1_{|z|\ll1}}\Big)
\ee
We consider the contribution of the last bound only since the other two follow similarly:
\be\label{tempt32}
|t|^{-\frac32}
\int_{\R^2 }\int_{|x|}^{|x-x_1|} \int_{|y|}^{|y-y_1|}	\int_{|z|\ll 1}  \big(|\widetilde \Lambda_{zz}|+(s_1+s_2) |\widetilde \Lambda_z|+| \widetilde \Lambda|\big)\, dz \,ds_1\,ds_2\,  dx_1 \, dy_1.
\ee
Note that 
\begin{align}\label{eqn:F wt small}
		\int_{|x|}^{|x-x_1|} \int_{|y|}^{|y-y_1|} 1 \,  ds_1\, ds_2\les \la x_1 \ra \la y_1 \ra,
\end{align}
as the length of the $s_1$ and $s_2$ integrals may be bounded by $|\,|x-x_1|-|x|\,|\les \la x_1\ra $ and $\la y_1\ra $ respectively. 
We also have
\begin{align}\label{eqn:F wt big}
\int_{|x|}^{|x-x_1|} \int_{|y|}^{|y-y_1|} 
	(s_1+s_2)  \,  ds_1\, ds_2\les   \la x_1 \ra^2 \la y_1 \ra^2 \la x\ra \la y \ra.
\end{align}	
Here we used that $|\int_a^b s\, ds|=\frac{1}{2}|b^2-a^2|\les |(b-a)(b+a)|\les |b-a|\max (a,b)$.
Therefore, after changing the order of integration, we see
$$
	|\eqref{tempt32}|\les
	|t|^{-\frac32}
	\int_{|z|\ll 1}	\int_{\R^2 } \la x_1\ra\la y_1\ra \big(|\widetilde \Lambda_{zz}|+   \la x_1 \ra \la y_1 \ra \la x\ra \la y \ra
	|\widetilde \Lambda_z|+| \widetilde \Lambda|\big)\, dx_1dy_1dz.
$$
Recalling the definition of $\widetilde \Lambda$ and using the Cauchy-Schwarz inequality in $x_1$ and $y_1$ integrals, we obtain
\begin{multline*}
	|\eqref{eqn:Lambda twiddle}| 	\les |t|^{-\frac32}
	\int_{|z|\ll 1} \|\la x_1\ra |v^*(x_1)|\|_{L^2_{x_1}} (\| \,|\Lambda_{zz}|\,\|_{L^2\to L^2} + \| \,|\Lambda| \,\|_{L^2\to L^2} )\| \la y_1 \ra |v(y_1)|\|_{L^{2}_{y_1}} dz \\+ \la x\ra \la y \ra |t|^{-\frac32}
	\int_{|z|\ll 1} \|\la x_1\ra^2 |v^*(x_1)|\|_{L^2_{x_1}} \| \,|\Lambda_{z}|\,\|_{L^2\to L^2} \| \la y_1 \ra^2 |v(y_1)|\|_{L^{2}_{y_1}} dz 
	\les  	\la x\ra \la y\ra \la t\ra^{-\f32}.
\end{multline*}
In the last inequality, we used $|v(x)|\les \la x\ra^{-\f52-}$ and the bounds on $\Lambda$ and its derivatives.  Note that the contribution of the first two terms in \eqref{minfromvdc} can be handled similarly but only requires $|v(x)|\les \la x\ra^{-\frac32-}$ as we need only use \eqref{eqn:F wt small} and not \eqref{eqn:F wt big}.

We now consider the least singular term involving $\mR_1(z)$ on both sides. 
We need to control:
$$	\int_{\R^3} e^{-it\sqrt{z^2+m^2}+iz(|x-x_1|+|y-y_1|)} \frac{z\chi(z)}{\sqrt{z^2+m^2}} \mR_1(z)(x,x_1)   [v^*Q\Lambda(z)Qv](x_1,y_1) \mR_1(z)(y_1,y)\, dx_1 dy_1 dz.
$$
Let $\widetilde \Lambda:= \mR_1(z)(x,x_1)   [v^*Q\Lambda(z)Qv](x_1,y_1) \mR_1(z)(y_1,y) $. 
After changing the order of integration, we use Lemma~\ref{lem:vdc} (for $j=0$) in the $z$ integral with $\psi_0(z)=\frac{z\chi(z)}{  \sqrt{z^2+m^2}}  \widetilde \Lambda $ to obtain the bound
\be\label{minfromvdc1}
\les\min\Big(\| z \widetilde \Lambda\|_{L^1_{|z|\ll1}},|t|^{-\f12}\big\|\partial_z(z   \widetilde \Lambda(z))\big\|_{L^1_{|z|\ll 1}}, |t|^{-\f32} \big\| 
|\widetilde \Lambda_{zz}|+(|x-x_1|+|y-y_1|) |\widetilde \Lambda_z|+| \widetilde \Lambda| \big\|_{L^1_{|z|\ll1}}\Big).
\ee
Since $|\partial_z^k \mR_1(z)|=O(1)$, $k=0,1,2$, we can assume all derivatives hit $\Lambda(z)$, and the proof proceeds as in the previous case. 

The remaining cases  are handled similarly using the additional factor(s) of $z$ in place of the missing $Q$ orthogonalities.   
\end{proof}

It now suffices to consider the contribution of the operator $c_PzP$  in \eqref{eqn:symm res id} for the weighted bound:
\begin{lemma}\label{lem:S disp ests}	
If $|v(x)|\les \la x\ra^{-\frac52-}$, then for $|t|>1$ we have the expansion
	\begin{align*}
		\frac{1}{2\pi i}\int_{\R} e^{-it\sqrt{z^2+m^2}} \frac{\chi(z) z }{ \sqrt{z^2+m^2}}  [\mR_0 (z)  v^*(c_PzP)v \mR_0 (z)](x,y)\, dz= F_t^0(x,y)+O(|t |^{-\f32}\la x \ra \la y \ra),
	\end{align*}
	where $F_t^0(x,y)$ is given by \eqref{Ft0def}.
\end{lemma}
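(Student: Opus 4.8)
The plan is to extract the main term by isolating the leading singular pieces of $\mR_0(z)$ on both sides of the operator $v^*(c_P z P)v$, and to show that every remaining piece produces an $O(|t|^{-3/2}\la x\ra\la y\ra)$ contribution by the argument already carried out in Proposition~\ref{prop:Q}. First I would use the splitting \eqref{eq:R0 R1}, writing $\mR_0(z)(x,y)=\frac{im}{2z}(\beta+I)e^{iz|x-y|}+e^{iz|x-y|}\mR_1(z)$ with $|\partial_z^k\mR_1(z)(x,y)|\les 1$ for $k=0,1,2$. This breaks the integrand into four terms: (singular)$\times$(singular), (singular)$\times$($\mR_1$), ($\mR_1$)$\times$(singular), and ($\mR_1$)$\times$($\mR_1$). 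The crucial point is that $c_PzP$ carries a factor of $z$, so the two ``inverse powers'' of $z$ from the singular parts of $\mR_0$ are cancelled down to a single $1/z$, and the resulting $z$-measure $\frac{z\chi(z)}{\sqrt{z^2+m^2}}\cdot\frac1{z^2}\cdot z=\frac{\chi(z)}{\sqrt{z^2+m^2}}$ is exactly what appears in the definition \eqref{Ft0def} of $F^0_t$.

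The main term comes from the (singular)$\times$(singular) piece. Here the relevant operator is $\frac{im}{2z}(\beta+I)\,v^*\,(c_PzP)\,v\,\frac{im}{2z}(\beta+I)$. Using $P=\|(a+b,c+d)^T\|_2^{-2}v(\beta+I)v^*$ together with $(\beta+I)v^*v(\beta+I)$ acting through the projection, and the identity $(\beta+I)^2=2(\beta+I)$ (from \eqref{eqn:anticomm}, $\beta^2=I$), one computes that $(\beta+I)v^*\cdot c_P z P\cdot v(\beta+I)$ collapses to a constant multiple of $(\beta+I)$; tracking the constants $c_P=\frac{-2i}{m\|(a+b,c+d)^T\|_2^2}$ and the two factors of $\frac{im}{2z}$ yields precisely $m(\beta+I)/(4\pi)$ after the $1/(2\pi i)$ prefactor. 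The $x_1,y_1$ integrals then reduce to $\int v^*(x_1)(\beta+I)v(y_1)\,dx_1\,dy_1$ up to the phase $e^{iz(|x-x_1|+|y-y_1|)}$; replacing $|x-x_1|$ by $|x|$ and $|y-y_1|$ by $|y|$ in the phases (the error being $O_2(z)$ times weights $\la x_1\ra\la y_1\ra$, handled by the Van der Corput bound of Lemma~\ref{lem:vdc} exactly as in \eqref{eqn:F defn}--\eqref{eqn:F wt big}) produces $\int_\R e^{-it\sqrt{z^2+m^2}+iz(|x|+|y|)}\frac{\chi(z)}{\sqrt{z^2+m^2}}\,dz$, which is $F^0_t(x,y)$ up to the stated $O(|t|^{-3/2}\la x\ra\la y\ra)$ remainder. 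Since $P$ is not orthogonal to $v(\beta+I)$ (unlike $Q$), one cannot integrate by parts to gain $z$'s for free here, which is why the leading non-decaying term $F^0_t$ genuinely survives.

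For the three remaining terms, at least one factor of $\mR_1(z)$ is present, so the combined inverse power of $z$ is at worst $1/z$ and is cancelled entirely by the $z$ from $c_PzP$; the integrand is then a smooth (in $z$) amplitude times $e^{-it\sqrt{z^2+m^2}+iz(|x-x_1|+|y-y_1|)}$ with no singularity, and Lemma~\ref{lem:vdc} with $j=0$ gives the $|t|^{-3/2}$ bound with the weight budget $\la x\ra\la y\ra$, using $|v(x)|\les\la x\ra^{-5/2-}$ and $|\partial_z^k\mR_1(z)|\les 1$ and the Cauchy--Schwarz estimates of Proposition~\ref{prop:Q}; the $P$ in the middle contributes rank one, hence absolutely bounded, and one argues as in Proposition~\ref{prop:Q} with $Q$ replaced by $P$ (the orthogonality relations \eqref{eqn:Q} are not needed precisely because the $z$ in $c_PzP$ supplies the missing power). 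The main obstacle is purely bookkeeping: correctly tracking all the constants and matrix factors $(\beta+I)$, $v$, $v^*$, $c_P$, $\frac{im}{2z}$, $\frac1{2\pi i}$ through the (singular)$\times$(singular) computation so that the extracted operator matches \eqref{Ft0def} on the nose, and verifying that the phase-replacement errors in that term do not exceed $O(|t|^{-3/2}\la x\ra\la y\ra)$ — both of which follow the template already established in the proof of Proposition~\ref{prop:Q}.
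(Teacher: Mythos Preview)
Your overall strategy matches the paper's: split $\mR_0$ via \eqref{eq:R0 R1}, treat the three terms containing at least one $\mR_1$ as error via Lemma~\ref{lem:vdc} (the factor $z$ in $c_PzP$ cancels the remaining $1/z$, so no $Q$-orthogonality is needed), and extract the leading contribution from the (singular)$\times$(singular) piece using \eqref{eqn:S P int}. The difference lies in how you handle the phase in that leading piece, and there is a gap.

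The paper replaces $e^{iz(|x-x_1|+|y-y_1|)}$ directly by $e^{iz|x-y|}$. After collapsing the $x_1,y_1$ integrals via \eqref{eqn:S P int}, this yields $F_t^0$ on the nose, because $F_t^0$ is defined in \eqref{Ft0def} with phase $e^{iz|x-y|}$. The difference $e^{iz(|x-x_1|+|y-y_1|)}-e^{iz|x-y|}=iz\int_{|x-y|}^{|x-x_1|+|y-y_1|}e^{izs}\,ds$ is then bounded via Lemma~\ref{lem:vdc} using $\big|(|x-x_1|+|y-y_1|)^2-|x-y|^2\big|\les\la x\ra\la y\ra\la x_1\ra^2\la y_1\ra^2$.

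Your route replaces $|x-x_1|\to|x|$ and $|y-y_1|\to|y|$ separately, landing on the phase $e^{iz(|x|+|y|)}$. But this is \emph{not} $F_t^0$: by \eqref{Ft0def} the phase in $F_t^0$ is $e^{iz|x-y|}$, not $e^{iz(|x|+|y|)}$. You still owe the estimate
\[
\int_\R e^{-it\sqrt{z^2+m^2}}\big[e^{iz(|x|+|y|)}-e^{iz|x-y|}\big]\frac{\chi(z)}{\sqrt{z^2+m^2}}\,dz=O\big(|t|^{-3/2}\la x\ra\la y\ra\big),
\]
which follows by writing the bracket as $iz\int_{|x-y|}^{|x|+|y|}e^{izs}\,ds$ and computing $\int_{|x-y|}^{|x|+|y|}s\,ds=|xy|+xy\les\la x\ra\la y\ra$ (this is exactly the opening computation in the proof of Lemma~\ref{lem:Ft}). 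Once this missing step is supplied your argument is complete and equivalent to the paper's, though the paper's single-step replacement to $|x-y|$ is more economical.
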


\begin{proof} 
Using \eqref{eq:R0 R1} and recalling that $c_P=\frac{-2i    }{m \| (a+b,c+d)^T\|_2^{ 2} }$, we have
	\begin{multline*}
	 \mR_0 v^*(c_Pz P)v \mR_0\\= \frac{ -2iz}{m \| (a+b,c+d)^T\|_2^{ 2} }    e^{iz(|x-x_1|+|y_1-y|)}  
\bigg[\frac{im}{2z}(\beta+I)+ \mR_{1}(z)\bigg] v^*Pv \bigg[\frac{im}{2z}(\beta+I)+ \mR_{1}(z)\bigg]\\
		=\frac{  im  e^{iz(|x-x_1|+|y_1-y|)} }{2 \| (a+b,c+d)^T\|_2^2 z} (\beta+I)v^*Pv(\beta+I)+ e^{iz(|x-x_1|+|y_1-y|)} \mathcal E(z,x,y),
	\end{multline*}
	where $\mathcal E(z,x,y)$ satisfies $\| \partial_z^k  \mathcal E(z,x,y) \|_1\les 1$ for $k=0,1,2$.

Hence, applying Lemma~\ref{lem:vdc} to the contribution of the second term with $\psi_0(z)=\frac{z\chi(z)}{\sqrt{z^2+m^2}} \mathcal E(z,x,y)$ yields the desired bound.  
	To control the contribution of the first term, first note that 
	\begin{multline}\label{eqn:S P int}
	\int (\beta+I)v^*(x_1)P(x_1,y_1)v(y_1) (\beta+I)\, dx_1\, dy_1=\int (\beta+I)v^*(x_1) v(x_1) (\beta+I)\, dx_1  \\
	=\| (a+b,c+d)\|_2^2 (\beta+I).
	\end{multline}
We can rewrite the contribution of the first term as 
	\begin{multline*}
		\frac{m}{4 \pi  \| (a+b,c+d)^T\|_2^{ 2} }
		\int_{\R} e^{-it\sqrt{z^2+m^2}} \frac{\chi(z)}{\sqrt{z^2+m^2}}    e^{iz(|x-x_1|+|y-y_1|)} (\beta+I)v^*Pv(\beta+I) \, dz\\
		=\frac{m}{4\pi\| (a+b,c+d)^T\|_2^{ 2} }\int_{\R} e^{-it\sqrt{z^2+m^2}} e^{iz|x-y|}\frac{\chi(z)}{\sqrt{z^2+m^2}}      (\beta+I)v^*Pv(\beta+I) \, dz\\
		+\frac{m}{4\pi\| (a+b,c+d)^T\|_2^{ 2} }\int_{\R} e^{-it\sqrt{z^2+m^2}} \frac{ \chi(z) }{\sqrt{z^2+m^2}}   \bigg[e^{iz(|x-x_1|+|y-y_1|)}-e^{iz|x-y|} \bigg] (\beta+I)v^*Pv(\beta+I) \, dz.
	\end{multline*}
	Using \eqref{eqn:S P int} on the first term yields the operator $F_t^0(x,y)$, see \eqref{Ft0def}.  As in the proof of Proposition~\ref{prop:Q}, noting that 
	$$ 
		e^{iz(|x-x_1|+|y-y_1|)}-e^{iz|x-y|} =iz \int_{|x-y|}^{|x-x_1|+|y-y_1|} e^{isz}\, ds.
	$$ 
  allows us to apply Lemma~\ref{lem:vdc} to the second term. We note that
$$
\big|(|x-x_1|+|y-y_1|)^2-|x-y|^2\big|\les  \la x\ra \la y\ra \la x_1\ra^2 \la y_1\ra^2, 
$$
which leads to the weighted bound as in Proposition~\ref{prop:Q}.	
\end{proof}

We are now ready to prove Proposition~\ref{prop:low disp reg}.

\begin{proof}[Proof of Proposition~\ref{prop:low disp reg}]
	
	Utilizing the symmetric resolvent identity, \eqref{eqn:symm res id}, we bound the contribution of each term to \eqref{eqn:stone}.  For the unweighted bound, we note that Theorem~\ref{thm:giggles} controls the contribution of the sole free resolvent's contribution to show
	$$
	\bigg| \frac{1}{2\pi i} \int_{\R} e^{-it\sqrt{z^2+m^2}} \frac{z}{\sqrt{z^2+m^2}}  \mR_0  (z)(x,y)\, dz\bigg| \les \la t\ra^{-\f12}.	
	$$
	Using the second expansion for $M^{-1}(z)$ obtained in Proposition~\ref{Minversenew}, all summands are controlled by Proposition~\ref{prop:Q} for the unweighted bound. 
	
	For the  weighted bound, using the third expansion for $M^{-1}(z)$ obtained in Proposition~\ref{Minversenew}, all summands are controlled by Proposition~\ref{prop:Q} except the leading term $c_PzP$. One needs to utilize the delicate cancellation between this term and the free resolvent.   Specifically, using the second bound in Theorem~\ref{thm:giggles} for the free resolvent, and  Lemma~\ref{lem:S disp ests} for the contribution of $c_PzP$, we see that their contributions add up to $ F^0_t(x,y)+O
	(|t|^{-\f32 }  \la x-y\ra\big)-\big[F_t^0(x,y)+O(|t|^{-\f32}\la x \ra \la y \ra )\big] =O(|t|^{-\f32}\la x \ra \la y \ra ).$
\end{proof}

We now turn to the dispersive bounds when the threshold is not regular. Before we state the main result we have the following expansion for $F^0_t$:
\begin{lemma}\label{lem:Ft} For $|t|>1$, the operator  $F^0_t$ given by \eqref{Ft0def} in Lemma~\ref{lem:S disp ests} and Theorem~\ref{thm:giggles} can be written as
$$
F^0_t(x,y) = 
\frac{m}{4\pi }\frac{(-2\pi i)^{\frac12}e^{imt}}{(mt)^{\frac12}} (\beta+I)\chi(x /t)e^{-im\sqrt{t^2-x^2}}\chi(y /t)e^{-im\sqrt{t^2-y^2}}  +O(|t|^{-\frac32}\la x\ra \la y\ra).
$$
In particular, for $|t|>1$, it is the sum of a rank one operator satisfying the unweighted bound $|t|^{-\frac12}$ and an operator satisfying the weighted bound. 
\end{lemma}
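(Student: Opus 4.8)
The plan is to evaluate the oscillatory integral in \eqref{Ft0def} by the stationary phase method and then re-package the resulting phase and amplitude into the asserted product structure, pushing all corrections into the error $O(|t|^{-\f32}\la x\ra\la y\ra)$. Throughout write $r=|x-y|$ and $\phi(z)=-t\sqrt{z^2+m^2}+zr$, so that $F^0_t(x,y)=\frac{m}{4\pi}(\beta+I)\int_\R e^{i\phi(z)}\frac{\chi_0(z)}{\sqrt{z^2+m^2}}\,dz$; we may assume $t>0$, the case $t<0$ following by taking complex conjugates (compare the treatment of the negative spectral branch). The first step is a dichotomy. If $\la x\ra\la y\ra\gtrsim |t|$, the identity is essentially trivial: the claimed principal term has modulus $\les |t|^{-\f12}\les |t|^{-\f32}\la x\ra\la y\ra$, while Theorem~\ref{thm:giggles} gives $|F^0_t(x,y)|\les |t|^{-\f12}+|t|^{-\f32}\la x-y\ra\les |t|^{-\f32}\la x\ra\la y\ra$ using $\la x-y\ra\les\la x\ra\la y\ra$. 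Hence it suffices to work in the regime $\la x\ra\la y\ra\leq c|t|$ for a small constant $c$ to be fixed in terms of $\chi_0$; then $|x|,|y|\les c|t|$ and $r\les c|t|$.

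In that regime $\phi$ has a single nondegenerate critical point $z_c=\frac{mr}{\sqrt{t^2-r^2}}$, and on $\textrm{supp}\,\chi_0$ one has $\phi''(z)=-tm^2(z^2+m^2)^{-\f32}$, so $|\phi''(z)|\approx |t|$ and $|\phi^{(k)}(z)|\les |t|$ for $k\geq 2$, all uniformly in $r$; moreover $|z_c|\les mc$, so for $c$ small enough $z_c$ lies inside the set where $\chi_0\equiv 1$. The amplitude $z\mapsto \chi_0(z)/\sqrt{z^2+m^2}$ is smooth, compactly supported, and independent of $r$. Stationary phase (see \cite{Stein}) then gives, uniformly in $r$,
$$
\int_\R e^{i\phi(z)}\frac{\chi_0(z)}{\sqrt{z^2+m^2}}\,dz=\Big(\frac{2\pi}{|\phi''(z_c)|}\Big)^{\f12}e^{-i\frac{\pi}{4}}\frac{e^{i\phi(z_c)}}{\sqrt{z_c^2+m^2}}+O(|t|^{-\f32}).
$$
A direct computation yields $z_c^2+m^2=\frac{m^2t^2}{t^2-r^2}$, $\phi(z_c)=-m\sqrt{t^2-r^2}$ and $|\phi''(z_c)|=\frac{(t^2-r^2)^{\f32}}{mt^2}$, so, using $\sqrt{2\pi}\,e^{-i\frac\pi4}=(-2\pi i)^{\f12}$, the main term equals $\frac{(-2\pi i)^{\f12}}{\sqrt m}(t^2-r^2)^{-\f14}e^{-im\sqrt{t^2-r^2}}$. (Alternatively, the full integral without $\chi_0$ is $-i\pi H_0^{(2)}(m\sqrt{t^2-r^2})$, and one may instead quote the Hankel asymptotics.)

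It remains to replace $(t^2-r^2)^{\f14}$ by $t^{\f12}$ and $e^{-im\sqrt{t^2-r^2}}$ by $e^{imt}e^{-im\sqrt{t^2-x^2}}e^{-im\sqrt{t^2-y^2}}$. Set $h(u)=|t|-\sqrt{t^2-u^2}$ for $|u|<|t|$, so that $0\leq h(u)\leq u^2/|t|$, $|h'(u)|=\frac{|u|}{\sqrt{t^2-u^2}}\les |u|/|t|$ on the relevant range, $\sqrt{t^2-r^2}=|t|-h(r)$, and $\sqrt{t^2-r^2}-\sqrt{t^2-x^2}-\sqrt{t^2-y^2}+|t|=h(x)+h(y)-h(r)$. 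Writing each $h$ as $\int_0^\cdot h'$ and using the previous bound yields the elementary estimate $|h(x)+h(y)-h(r)|\les |xy|/|t|\les \la x\ra\la y\ra/|t|$ whenever $|x|,|y|\les c|t|$, and likewise $h(r)\les r^2/|t|\les (\la x\ra\la y\ra)^2/|t|$; hence $(t^2-r^2)^{-\f14}=t^{-\f12}\big(1+O(\la x\ra\la y\ra/|t|)\big)$ and $e^{-im\sqrt{t^2-r^2}}=e^{imt}e^{-im\sqrt{t^2-x^2}}e^{-im\sqrt{t^2-y^2}}\big(1+O(\la x\ra\la y\ra/|t|)\big)$ on the regime $\la x\ra\la y\ra\leq c|t|$. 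Since the main term has modulus $\les |t|^{-\f12}$, each relative error of size $O(\la x\ra\la y\ra/|t|)$ contributes an absolute error $O(|t|^{-\f32}\la x\ra\la y\ra)$; combined with the $O(|t|^{-\f32})$ stationary phase remainder (and with the contribution of the part of $\chi_0$ away from $z_c$, which is negligible by nonstationary phase) this gives the asserted formula, without the cutoffs $\chi(x/t),\chi(y/t)$, on the regime $\la x\ra\la y\ra\leq c|t|$. Inserting those cutoffs (any smooth $\chi$ with $\chi\equiv1$ near $0$ and supported in $[-c,c]$) leaves the right-hand side unchanged there and, on the complement, changes the principal term only by $O(|t|^{-\f12})=O(|t|^{-\f32}\la x\ra\la y\ra)$. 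Finally, the displayed principal term is a rank one operator — the matrix $\beta+I=\left(\begin{smallmatrix}1&1\\1&1\end{smallmatrix}\right)$ has rank one and the scalar kernel factors as a function of $x$ times a function of $y$ — with $L^1\to L^\infty$ norm $\les|t|^{-\f12}$, while the remainder obeys the stated weighted bound; this gives the last assertion. The one genuinely delicate point is the interaction between the truncation $\chi_0$ inside the $z$-integral and the location of the critical point $z_c$, which is exactly what forces the case split at $\la x\ra\la y\ra\sim|t|$ and the appearance of $\chi(x/t),\chi(y/t)$ in the statement.
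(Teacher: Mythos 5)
Your argument is correct, and it reaches the stated expansion by a genuinely different route than the paper. The paper never does stationary phase on the phase $-t\sqrt{z^2+m^2}+z|x-y|$ directly: it first trades $e^{iz|x-y|}$ for $e^{iz(|x|+|y|)}$, estimating the difference by writing it as $iz\int_{|x-y|}^{|x|+|y|}e^{izs}\,ds$ and applying Lemma~\ref{lem:vdc} together with $\int_{|x-y|}^{|x|+|y|}s\,ds=|xy|+xy$, which puts that difference straight into the $O(|t|^{-\f32}\la x\ra\la y\ra)$ error; it then treats $G_t(|x|+|y|)$ by non-stationary phase when $|x|+|y|\gtrsim |t|$ and quotes Lemma~3.6 of \cite{EGT} (with $r_3=r_4=0$) for the regime $|x|,|y|\ll|t|$, which already delivers the factored phase $e^{-im\sqrt{t^2-x^2}}e^{-im\sqrt{t^2-y^2}}$ before the cutoffs $\chi(\cdot/t)$ are inserted. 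You instead keep $r=|x-y|$ in the phase, compute the critical point, critical value and Hessian explicitly, and only afterwards factor the resulting phase via the elementary bound $|h(x)+h(y)-h(|x-y|)|\les |xy|/|t|$ for $h(u)=|t|-\sqrt{t^2-u^2}$, with a dichotomy at $\la x\ra\la y\ra\sim|t|$ (using the bounds of Theorem~\ref{thm:giggles}) to keep the critical point inside $\{\chi_0\equiv1\}$. Your computations check out: $z_c^2+m^2=m^2t^2/(t^2-r^2)$, $\phi(z_c)=-m\sqrt{t^2-r^2}$, $|\phi''(z_c)|=(t^2-r^2)^{3/2}/(mt^2)$ reproduce exactly the constant $\frac{(-2\pi i)^{1/2}e^{imt}}{(mt)^{1/2}}$ after the replacements $(t^2-r^2)^{-1/4}=t^{-1/2}(1+O(\la x\ra\la y\ra/t))$ and $e^{-im\sqrt{t^2-r^2}}=e^{imt}e^{-im\sqrt{t^2-x^2}}e^{-im\sqrt{t^2-y^2}}(1+O(\la x\ra\la y\ra/t))$, and the uniformity of the stationary-phase remainder is legitimate since $r/t$ stays in a compact subinterval of $[0,1)$ and the amplitude is $r$-independent. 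What each route buys: the paper's $|x-y|\to|x|+|y|$ swap and the appeal to \cite{EGT} are economical because the very same devices ($G_t(s_1+s_2)$ and the cited asymptotics) are reused verbatim in Proposition~\ref{prop:nonreg}, whereas your version is self-contained, makes the phase factorization and constants explicit, and needs no external lemma, at the cost of redoing the stationary-phase bookkeeping and the case split by hand. The only cosmetic caveat is the $t<0$ case, which you dispatch by conjugation exactly as the paper implicitly does.
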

\begin{proof}
Note that 
\begin{multline*}
\int_{\R} e^{-it\sqrt{z^2+m^2}}\big[e^{+iz|x-y|}-e^{iz(|x|+|y|)}\big] \frac{\chi(z)}{\sqrt{z^2+m^2}}    \, dz 
	\\=i\int^{|x|+|y|}_{|x-y|} \int_\R e^{-it\sqrt{z^2+m^2}+izs} \frac{z \chi(z)}{\sqrt{z^2+m^2}}  \, dz\,  ds 
	=O(|t|^{-\frac32}\la x\ra \la y\ra),
	\end{multline*}	
by Lemma~\ref{lem:vdc} noting that 
	$$
	\int^{|x|+|y|}_{|x-y|} s\,  ds =|xy|+xy.
	$$
	Defining 
	\be\label{Gtdef}
	G_t(r):=   \int_{\R} e^{-it\sqrt{z^2+m^2}+izr} \frac{\chi(z)}{\sqrt{z^2+m^2}}    \, dz, 
	\ee
	it remains to prove that $G_t(|x|+|y|)$,
	is finite rank up to an operator satisfying the weighted decay (for $|t|>1$). Note that if $r\gtrsim |t|$ then there are no critical points and by non-stationary phase, $G_t=O(r^{-N})=O(|t|^{-N})$. When $|x|,|y|\ll |t|$, by Lemma 3.6 in \cite{EGT}, with $r_3=r_4=0$,  we see
	$$
	G_t(|x|+|y|)=\frac{(-2\pi i)^{\frac12}e^{imt}}{(mt)^{\frac12}}e^{-im\sqrt{t^2-x^2}} e^{-im\sqrt{t^2-y^2}} + O(|t|^{-\frac32} \la x\ra \la y\ra).
	$$
	Therefore,
	\be\label{Gtexp}
	G_t(|x|+|y|)=\frac{(-2\pi i)^{\frac12}e^{imt}}{(mt)^{\frac12}}\chi(x/t) e^{-im\sqrt{t^2-x^2}} \chi(y/t)e^{-im\sqrt{t^2-y^2}} + O(|t|^{-\frac32} \la x\ra \la y\ra).
	\ee
Recalling \eqref{Ft0def}, we obtain the claim. 
\end{proof}

We are now ready to prove the first two claims in Theorem~\ref{thm:main nonreg}.
\begin{prop}\label{prop:nonreg}
	If the threshold energy $ m $ is not regular, 
we have 
	$$
	\|e^{-itH}P_{ac}(H)\chi(H)\|_{L^1\to L^\infty} \les \la t\ra^{-\f12},
	$$
	provided that $|v(x)|\les \la x\ra^{-\frac52-}$.
	Furthermore, if $|v(x)|\les \la x\ra^{-\f92-}$, then for $|t|>1$, there is an operator $ {F_t^+}(x,y)$ of rank at most one satisfying $\|{F_t^+}\|_{1\to \infty}\les | t|^{-\f12}$, so that
	$$
	\|e^{-itH}P_{ac}(H)\chi(H)-{F_t^+} \|_{L^1\to L^\infty} \les |t|^{-\f32} \la x\ra \la y \ra.
	$$
\end{prop}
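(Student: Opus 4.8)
The plan is to run the low-energy machinery of Propositions~\ref{prop:low disp reg}, \ref{prop:Q} and Lemma~\ref{lem:S disp ests} with the regular expansions of $M^{-1}$ replaced by the singular ones of Proposition~\ref{MinversenewS_1}. Starting from Stone's formula and the symmetric resolvent identity \eqref{eqn:symm res id}, the contribution of the lone free resolvent $\mR_0$ is already controlled by Theorem~\ref{thm:giggles} (it is $\les \la t\ra^{-1/2}$, and for $|t|>1$ it equals $F^0_t$ up to an $O(|t|^{-3/2}\la x-y\ra)$ error, which is absorbed into $O(|t|^{-3/2}\la x\ra\la y\ra)$). So in both claims the content is the contribution of $\mR_0(z)v^*[M^{-1}(z)]v\mR_0(z)$, which I treat by inserting the relevant expansion of $M^{-1}$ and reducing to one-dimensional oscillatory integrals via Lemma~\ref{lem:vdc}, writing $e^{iz|x-x_1|}=e^{iz|x|}+iz\,F(z,x,x_1)$ on any side where an orthogonality against $v$ is available.

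For the first (unweighted) claim, under $|v(x)|\les\la x\ra^{-5/2-}$ I use the first expansion of Proposition~\ref{MinversenewS_1}, $M^{-1}(z)=z^2\Lambda_0(z)+z\Lambda_1(z)Q+zQ\Lambda_2(z)+Q\Lambda_3(z)Q$ with each $\Lambda_j$ obeying \eqref{Lambdat12}. Every summand either carries a $Q$ adjacent to $v$ — so that \eqref{eqn:Q} permits the phase subtraction above on the corresponding side — or has a compensating power of $z$; in either case Proposition~\ref{prop:Q} applies verbatim and yields $\la t\ra^{-1/2}$.

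For the weighted claim, under $|v(x)|\les\la x\ra^{-9/2-}$ I use the expansion \eqref{Minvtemp2}, refined to \eqref{Minvfinal?} via Remark~\ref{rmk:Binv leading},
\begin{align*}
M^{-1}(z)=c_PzP&+\frac1{\mathcal D}\Big[\frac1{c_Pz}S_1-S_1TP-PTS_1-\frac{2i}{m}|\kappa_0|^2zP\Big]\\
&+z^2\Lambda_0(z)+zQ\Lambda_1(z)+z\Lambda_2(z)Q+Q\Lambda_3(z)Q,
\end{align*}
now with the $\Lambda_j$ satisfying the stronger \eqref{Lambdat32}. The $\Lambda_j$-terms are dispatched by the $|t|^{-3/2}\la x\ra\la y\ra$ part of Proposition~\ref{prop:Q}. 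The term $c_PzP$ is treated exactly as in Lemma~\ref{lem:S disp ests}: it produces $F^0_t$ modulo $O(|t|^{-3/2}\la x\ra\la y\ra)$, and by the minus sign in \eqref{eqn:symm res id} this $F^0_t$ cancels the $F^0_t$ coming from the free evolution in Theorem~\ref{thm:giggles}. What remains is the bracket $\frac1{\mathcal D}[\cdots]$, all of whose terms are built from the rank-one projections $S_1$ and $P$ (recall $S_1L^2$ and $PL^2$ are one-dimensional, Corollary~\ref{cor:S1} and the definition of $P$). For $\frac1{c_Pz}S_1$ I use $S_1\le Q$, whence $S_1v(\beta+I)=0$ and $(\beta+I)v^*S_1=0$ by \eqref{eqn:Q}, so I may carry out the phase subtraction on both sides; the resulting length factors carry the weights and, after Taylor-expanding the remaining slowly-varying amplitude about $z=0$, the leading part is a rank-one operator of size $|t|^{-1/2}$ and the remainder is $O(|t|^{-3/2}\la x\ra\la y\ra)$ by Lemma~\ref{lem:Ft} (equivalently the stationary-phase expansion of $G_t$, Lemma~3.6 of \cite{EGT}) together with \eqref{eqn:F wt small}–\eqref{eqn:F wt big}. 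For $S_1TP$, $PTS_1$, $zP$ orthogonality is available only on the $S_1$-side (respectively not at all), so one singular factor of $\mR_0$ survives; carrying out the subtraction on the side that admits it and then extracting the $z=0$ value of the surviving amplitude gives, again by Lemma~\ref{lem:Ft}, a rank-one operator of size $|t|^{-1/2}$ plus an $O(|t|^{-3/2}\la x\ra\la y\ra)$ error. Collecting the rank-one pieces — which, by the Birman–Schwinger characterization of the threshold resonance (and $G_0=(D_m+mI)(-|x-y|/2)$ from \eqref{eq:G0 defn}), all share the profile $\psi_+$; this is pinned down precisely in Proposition~\ref{prop:Ft real} — defines an operator $F_t^+$ of rank at most one with $\|F_t^+\|_{L^1\to L^\infty}\les|t|^{-1/2}$, and the remainder of $e^{-itH}P_{ac}(H)\chi(H)-F_t^+$ is $O(|t|^{-3/2}\la x\ra\la y\ra)$, which gives the stated bound after interpolating in $0\le\tau\le1$.

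\textbf{Main obstacle.} The delicate step is to separate, inside the bracket $\frac1{\mathcal D}[\cdots]$, the genuine rank-one leading term from a remainder bounded by $O(|t|^{-3/2}\la x\ra\la y\ra)$ and \emph{not} a larger power of the weights. This forces one to Taylor-expand each surviving amplitude in $z$ just far enough, to recognize the lowest-order coefficient as a fixed multiple of the canonical resonance function, and then to run the extra integrations by parts in $z$ while bookkeeping the weights produced by the phase subtractions through the length estimates \eqref{eqn:F wt small}–\eqref{eqn:F wt big}. It is precisely this bookkeeping — requiring quantities of the form $\la x\ra^2|v|$ to remain in $L^2$ after two $z$-derivatives — that dictates the hypothesis $|v(x)|\les\la x\ra^{-9/2-}$.
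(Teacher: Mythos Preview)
Your overall strategy matches the paper's: first claim via the first expansion of Proposition~\ref{MinversenewS_1} and Proposition~\ref{prop:Q}; second claim via \eqref{Minvfinal?}, with the $\Lambda_j$ terms handled by Proposition~\ref{prop:Q}, the $c_PzP$ term cancelling the free $F^0_t$, and the bracket $\frac1{\mathcal D}[\cdots]$ analyzed by splitting $\mR_0$ and applying the stationary-phase expansion of $G_t$ from Lemma~\ref{lem:Ft}. That much is fine.

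The genuine gap is your rank-one claim. You write that the rank-one pieces extracted from $\frac1{c_Pz}S_1$, $S_1TP$, $PTS_1$ and $-\frac{2i}{m}|\kappa_0|^2 zP$ ``all share the profile $\psi_+$'' and hence sum to a rank-one operator. They do not share that profile individually: the piece coming from $\frac1{c_Pz}S_1$ has spatial profile essentially $G_0v^*\phi$ on each side, the piece from $zP$ has profile $(1,1)^T$ on each side, and the mixed terms $S_1TP$, $PTS_1$ have one of each. A sum of four rank-one operators is generically rank four (here at most rank two, since only two vectors appear on each side), so more is needed. What the paper actually does is compute each of the four leading pieces with the precise constants carried by \eqref{Minvfinal?} and recognize the algebraic factorization
\[
a_1a_2+a_1b_2+b_1a_2+b_1b_2=(a_1+b_1)(a_2+b_2),
\]
where $a_1\sim G_0v^*\phi$ and $b_1\sim \kappa_0(1,1)^T$ (up to common scalar factors), and similarly for $a_2,b_2$. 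It is only \emph{after} this factorization that $a_1+b_1$ is identified with a multiple of the resonance function $\psi_+=-G_0v^*\phi+\kappa_0(1,1)^T$. The specific coefficient $-\frac{2i}{m}|\kappa_0|^2$ on the $zP$ term (and the common $1/\mathcal D$) is exactly what makes the cross terms fit; a generic coefficient would leave you with rank two. Deferring this to Proposition~\ref{prop:Ft real} is circular: that proposition takes the factored form $(a_1+b_1)(a_2+b_2)$ from the present proof as its starting point. So you need to carry out this explicit computation here, not invoke the Birman--Schwinger relation abstractly.

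A minor point: the final ``interpolating in $0\le\tau\le1$'' does not belong here---Proposition~\ref{prop:nonreg} is stated only for the endpoint weights; the interpolation happens at the level of Theorem~\ref{thm:main nonreg}.
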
 
\begin{proof}
The unweighted bound follows from Proposition~\ref{prop:Q} and the first expansion in Proposition~\ref{MinversenewS_1}.  For the weighted bound, the contribution of the $\Lambda_j$ terms in the second expansion in Proposition~\ref{MinversenewS_1} are controlled by Proposition~\ref{prop:Q}.  Noting the expansion in \eqref{Minvfinal?}, we  need to understand the contribution of 
 $$c_PzP+ \frac1{\mathcal D}\Big[\frac{1}{c_Pz }S_1-S_1TP-PTS_1-\frac{2i}{m}|\kappa_0|^2zP\Big].  $$
We note that the contribution of the first term, $c_PzP$,  given by  Lemma~\ref{lem:S disp ests} exactly cancels with the free evolution as in the regular case. The contribution of $P$ in the second term gives $-\frac{2i}{{\mathcal D} mc_P}|\kappa_0|^2F_t^0$ by Lemma~\ref{lem:S disp ests}, which can be  controlled by 
Lemma~\ref{lem:Ft}  up to  a rank one operator:
\be\label{eqn:zP weird}
	-\frac{2i}{{\mathcal D} mc_P}|\kappa_0|^2F_t^0= \frac{|\kappa_0|^2}{2\pi i {\mathcal D} c_P}  \frac{(-2\pi i)^{\frac12}e^{imt}}{(mt)^{\frac12}}
	(\beta+I)\chi(x /t)e^{-im\sqrt{t^2-x^2}}\chi(y /t)e^{-im\sqrt{t^2-y^2}}
	+O(|t|^{-\frac32}\la x\ra \la y\ra).
\ee
We now consider the contribution of the remaining terms $\frac1{\mathcal D}\big[\frac{1}{c_Pz }S_1-S_1TP-PTS_1\big]$:
$$
\frac1{2\pi i \mathcal D}	\int_{\R} e^{-it\sqrt{z^2+m^2}} \frac{z\chi(z)}{\sqrt{z^2+m^2}} \Big[\mR_0(z)  v^* \big[\frac{1}{c_Pz }S_1-S_1TP-PTS_1\big]v \mR_0(z)\Big](x,y)\, dz.
$$
To this end, we rewrite the resolvent once more as
\begin{align}\label{eq:R0 R2}
\mR_0(z)(x,y)= \mR_{-1}(z)(x,y) e^{iz|x-y|}+\mR_2(z) e^{iz|x-y|}, \,\,\textrm{where }
\end{align}
$$
\mR_{-1}(z)(x,y):=\frac{i}2\big[\frac{m}{z}(\beta+I)-  \alpha \textrm{sgn}(x-y)\big],
$$
and using \eqref{resolventex} we see that $\mR_2(z)=c_1z+O_2(z^3)$. 
Using the additional factor of $z$ from $\mR_2$, by a minor variation of the proof of Proposition~\ref{prop:Q}, we have 
	\begin{align*}
	\bigg| \int_{\R^3} e^{-it\sqrt{z^2+m^2}+iz(|x-x_1|+|y-y_1|)} \frac{z\chi(z)}{\sqrt{z^2+m^2}} \mR_2  v^* (\frac{c_{-1}}{z} S_1-  S_1T P -PTS_1)v \mR_{-1} \,dz\,dx_1\,dy_1 \bigg|& \les  |t|^{-\frac32}\la x\ra \la y \ra ,\\
	\bigg| \int_{\R^3} e^{-it\sqrt{z^2+m^2}+iz(|x-x_1|+|y-y_1|)} \frac{z\chi(z)}{ \sqrt{z^2+m^2}} (\mR_{j}) v^* (\frac{c_{-1}}{z} S_1-  S_1T P-PTS_1 )v \mR_{2}\, dz \,dx_1\,dy_1\bigg|& \les |t|^{-\frac32}\la x\ra \la y \ra.
	\end{align*}
	Here we may select any $j\in \{-1,2\}$ for the second bound.
	It remains to consider 
\begin{multline*}
	\frac1{2\pi i \mathcal D}	\int_{\R^3} e^{-it\sqrt{z^2+m^2}+iz(|x-x_1|+|y-y_1|)} \frac{z\chi(z)}{\sqrt{z^2+m^2}} \mR_{-1}(z)(x,x_1)\\  [v^* \big[\frac{1}{c_Pz }S_1-S_1TP-PTS_1\big]v] (x_1,y_1) \mR_{-1}(z)(y_1,y)\, dz\, dx_1\,dy_1.
\end{multline*}
We start with $S_1$ and consider the most singular part  
	\begin{align}\label{eqn:S1 singular}
		\frac{-1}{8\pi i c_P\mathcal D}	 \int_{\R^3} e^{-it\sqrt{z^2+m^2}+iz(|x-x_1|+|y-y_1|)} \frac{\chi(z)}{z^2\sqrt{z^2+m^2}} \big[m(\beta+I)   v^*S_1v m(\beta+I)\big](x_1,y_1)\, dx_1 dy_1 dz. 
	\end{align}
	Using \eqref{eqn:Q} and $S_1\leq Q$, we have 
	$$ \eqref{eqn:S1 singular} = \frac{-1}{8\pi i c_P\mathcal D}\int_{\R^2} \int_{|x|}^{|x-x_1| }  \int_{|y|}^{|y-y_1| }  G_t(s_1+s_2)   \big[m(\beta+I)   v^*S_1v   m(\beta+I)\big](x_1,y_1)\,ds_1 ds_2 dx_1 dy_1, 
	$$
where $G_t(r)$ is as in \eqref{Gtdef} in the proof of Lemma~\ref{lem:Ft}. Furthermore, using \eqref{Gtexp} and 
	letting 
$$H_t(y_1,y) =\int_{|y|}^{|y-y_1| }  \chi(s_1/t) e^{-im\sqrt{t^2-s_1^2}}ds_1 ,$$ we have (up to an error term satisfying the weighted bound)
	$$
	\eqref{eqn:S1 singular}=\frac{-1}{8\pi i c_P\mathcal D}\frac{(-2\pi i)^{\frac12}e^{imt}}{(mt)^{\frac12}} \int_{\R^2} H_t(x_1,x)  \big[m (\beta+I)   v^*S_1v \, m (\beta+I)\big](x_1,y_1) H_t(y_1,y)\,  dx_1 dy_1. 
	$$ 
Similarly, if we consider the contribution of
	\begin{align*}
\frac{-1}{8\pi i c_P\mathcal D}	\int_{\R^3} e^{-it\sqrt{z^2+m^2}+iz(|x-x_1|+|y_1-y|)} \frac{\chi(z)}{z\sqrt{z^2+m^2}} \big[ -\alpha \textrm{sgn}(x-x_1)  v^*S_1v\,   m(\beta+I) \big]\, dx_1\, dy_1\,  dz
	\end{align*}
	we obtain (up to an error term satisfying the weighted bound)
	$$
	\frac{-1}{8\pi i  c_P\mathcal D} \frac{(-2\pi i)^{\frac12}e^{imt}}{(mt)^{\frac12}} \int_{\R^2} \chi(x/t)e^{-im\sqrt{t^2-x^2}}  (-\alpha) \textrm{sgn}(x-x_1)   [v^*S_1v\,   m(\beta+I)](x_1,y_1) H_t(y_1,y)\,  dx_1 dy_1. 
	$$ 
	Finally the contribution of 	
	\begin{align*}
\frac{-1}{8\pi i c_P\mathcal D}	\int_{\R^3} e^{-it\sqrt{z^2+m^2}+iz(|x-x_1|+|y_1-y|)} \frac{\chi(z)}{\sqrt{z^2+m^2}} \big[ -\alpha \, \textrm{sgn}(x-x_1)  v^*S_1v (-\alpha )\,\textrm{sgn}(y_1-y) \big]\, dx_1\, dy_1\,  dz
	\end{align*}
is (up to an error term satisfying the weighted bound)
	\begin{multline*}
		\frac{-1}{8\pi i c_P\mathcal D}	 \frac{(-2\pi i)^{\frac12}e^{imt}}{(mt)^{\frac12}} \int_{\R^2} \chi(x/t)e^{-im\sqrt{t^2-x^2}} (- \alpha )\, \textrm{sgn}(x-x_1)  \\ [v^*S_1v](x_1,y_1)(-\alpha) \,\textrm{sgn}(y_1-y) \chi(y/t)e^{-im\sqrt{t^2-y^2}}\, dx_1\, dy_1.
	\end{multline*}
	
	So that, the contribution of the $S_1$ term is
	\begin{multline}
\label{S1cont}	\frac{-1}{8\pi i c_P\mathcal D}\frac{(-2\pi i)^{\frac12}e^{imt}}{(mt)^{\frac12}} \int_{\R^2}  \big[-\chi(x/t)e^{-im\sqrt{t^2-x^2}} \alpha \,\textrm{sgn}(x-x_1)+ mH_t(x_1,x) (\beta+I) \big] (v^*S_1v) (x_1,y_1) \\
	\big[-\chi(y /t)e^{-im\sqrt{t^2-y^2}} \alpha \, \textrm{sgn}(y_1-y)+ mH_t(y_1,y)(\beta+I)\big]  \,  dx_1 dy_1
	+O(|t|^{-\f32}\la x\ra \la y\ra ). 
	\end{multline}
Picking a unit $\phi\in S_1	L^2$ as in Lemma~\ref{lem:S1 char}, note by Corollary~\ref{cor:S1} $S_1L^2$ is one dimensional, and defining 
    \begin{align*}
    	a_1(x,t)&:=\int_{\R}  \big[-\chi(x/t)e^{-im\sqrt{t^2-x^2}} \alpha \,\textrm{sgn}(x-x_1)+ mH_t(x_1,x) (\beta+I) \big] v^*(x_1)\phi(x_1)\, dx_1,\\
    	a_2(y,t)&:=\int_{\R} \phi^*(y_1) v(y_1)  \big[-\chi(y/t)e^{-im\sqrt{t^2-y^2}} \alpha \,\textrm{sgn}(y_1-y)+ mH_t(y_1,y) (\beta+I)  \big] \, dy_1,   
   \end{align*} 
we have
$$
\eqref{S1cont}	=\frac{-1}{8\pi i c_P\mathcal D}\frac{(-2\pi i)^{\frac12}e^{imt}}{(mt)^{\frac12}}  a_1(x,t)a_2(y,t)
	+O(|t|^{-\f32}\la x\ra \la y\ra ). 
	$$
Similarly, the contributions of $S_1TP$ and $PTS_1$ terms are
\begin{multline}
\label{S1TPcont} 
\frac{1}{8\pi i \mathcal D}\frac{(-2\pi i)^{\frac12}e^{imt}}{(mt)^{\frac12}} \int_{\R^2}  \big[-\chi(x/t)e^{-im\sqrt{t^2-x^2}} \alpha \,\textrm{sgn}(x-x_1)+ mH_t(x_1,x) (\beta+I) \big] (v^*S_1TPv) (x_1,y_1) \\
	\big[m\chi(y /t)e^{-im\sqrt{t^2-y^2}} (\beta+I)\big]  \,  dx_1 dy_1
	+O(|t|^{-\f32}\la x\ra \la y\ra ), \text{ and}
\end{multline}
\begin{multline}\label{PTS1cont}
\frac{1}{8\pi i \mathcal D}\frac{(-2\pi i)^{\frac12}e^{imt}}{(mt)^{\frac12}} \int_{\R^2}  \big[m\chi(x /t)e^{-im\sqrt{t^2-x^2}} (\beta+I)\big] (v^*PTS_1v) (x_1,y_1) \\
	\big[-\chi(y/t)e^{-im\sqrt{t^2-y^2}} \alpha \, \textrm{sgn}(y_1-y)+ mH_t(y_1,y) (\beta+I) \big]  \,  dx_1 dy_1
	+O(|t|^{-\f32}\la x\ra \la y\ra ).
\end{multline}
Using  unit  $\phi \in S_1L^2$  we have above,   we can write the kernel of operator $PTS_1$ as $PTS_1(x_1,y_1)=[PT\phi](x_1) \phi^*(y_1)$. From Lemma~\ref{lem:S1 char}, $PT\phi(x_1)=\kappa_0v(x_1)(1,1)^T $. Therefore
   	\begin{multline*}
   		\int_{\R} (\beta+I)v^*(x_1)PTS_1(x_1,y_1)\, dx_1=
     	\kappa_0  	\int_{\R} (\beta+I) v^*(x_1)  v(x_1)(1,1)^T  \phi^*(y_1)\, dx_1\\
     	=\kappa_0 \|(a+b,c+d)^T\|_2^2 (1,1)^T\phi^*(y_1) .
	\end{multline*}
Also using $a_2$ notation as above, we write
\begin{multline*}
\eqref{PTS1cont}=\frac{1}{8\pi i \mathcal D}\frac{(-2\pi i)^{\frac12}e^{imt}}{(mt)^{\frac12}}  \kappa_0 m \|(a+b,c+d)^T\|_2^2 \chi(x /t)e^{-im\sqrt{t^2-x^2}}  (1,1)^T a_2(y,t)+O(|t|^{-\f32}\la x\ra \la y\ra ) \\
= \frac{-1}{8\pi i c_P\mathcal D}\frac{(-2\pi i)^{\frac12}e^{imt}}{(mt)^{\frac12}}  b_1(x,t) a_2(y,t)+O(|t|^{-\f32}\la x\ra \la y\ra ),
\end{multline*}
where $b_1(x,t):=2i \kappa_0  \chi(x /t)e^{-im\sqrt{t^2-x^2}}  (1,1)^T$.  We used $-c_Pm \|(a+b,c+d)^T\|_2^2= 2i $ in the last equality.
Similarly,   
$$
\eqref{S1TPcont}=\frac{-1}{8\pi i c_P\mathcal D}\frac{(-2\pi i)^{\frac12}e^{imt}}{(mt)^{\frac12}}  a_1(x,t)b_2(y,t)+O(|t|^{-\f32}\la x\ra \la y\ra ),
$$
where $b_2(y,t):=2i \overline{\kappa_0}  \chi(y /t)e^{-im\sqrt{t^2-y^2}}    (1,1).$
Finally, noting that $\beta+I=(1,1)^T(1,1)$, we can write the contribution of $P$ term as
$$
\eqref{eqn:zP weird}=
	  \frac{-1}{8\pi i {\mathcal D} c_P}  \frac{(-2\pi i)^{\frac12}e^{imt}}{(mt)^{\frac12}} b_1(x,t) b_2(y,t)  +O(|t|^{-\frac32}\la x\ra \la y\ra).
$$
Note that we can express  the sum of the contributions of $P,S_1, S_1TP, PTS_1$ above as an operator with kernel of the form:
    \begin{align}\label{eqn:rank one}
    	\frac{-1}{8\pi i c_P\mathcal D}\frac{(-2\pi i)^{\frac12}e^{imt}}{(mt)^{\frac12}} \big[ a_1(x,t)+b_1(x,t)\big] \big[ a_2(y,t)+b_2(y,t)\big]+O(|t|^{-\frac32}\la x\ra \la y\ra)=:{F_t^+}+O(|t|^{-\frac32}\la x\ra \la y\ra).
    \end{align}
\end{proof}

Finally, we prove the last claim in Theorem~\ref{thm:main nonreg}.   

\begin{prop}\label{prop:Ft real}
	
	If the threshold energy $m$  is not regular and $|v(x)|\les \la x\ra^{-\f92-}$, then for $|t|>1$, there is an operator $ F_t^+(x,y)$ of rank one given by 
	$$
		 F_t^+(x,y)=\frac{1}{2\pi i c_P \mathcal D}\frac{(-2\pi i)^{\frac12}e^{-imt}}{(mt)^{\frac12}} \psi(x) [\psi(y)]^*,
	$$
	where $\mathcal D$ and $c_P$ are the constants from Remark~\ref{rmk:Binv leading}, so that
	$$
	\|e^{-itH}P_{ac}^+(H)\chi(H)- F_t^+\|_{L^1\to L^\infty} \les |t|^{-\f32} \la x\ra^2 \la y \ra^2.
	$$
	Furthermore, $\psi_+\in L^\infty$ is a canonical resonance function, a distributional solution to $H\psi_+=m\psi_+$.  Hence $\|F_t^+\|_{1\to \infty}\les |t|^{-\f12}$.
	
\end{prop}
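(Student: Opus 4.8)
The plan is to start from the conclusion of Proposition~\ref{prop:nonreg}: by \eqref{eqn:rank one} the low-energy evolution already equals
\[
\frac{-1}{8\pi i c_P\mathcal D}\,\frac{(-2\pi i)^{\frac12}e^{imt}}{(mt)^{\frac12}}\,\big[a_1(x,t)+b_1(x,t)\big]\big[a_2(y,t)+b_2(y,t)\big]
\]
up to an error $O(|t|^{-\frac32}\la x\ra\la y\ra)$, where $a_j,b_j$ are as defined in the proof of Proposition~\ref{prop:nonreg}. The two things left to do are: (i) freeze the $t$-dependence inside the $x_1$- and $y_1$-integrals defining $a_1,b_1,a_2,b_2$, replacing $a_1(x,t)+b_1(x,t)$ by $e^{-imt}\psi_+(x)$ for a fixed $\psi_+\in L^\infty$ (and likewise $a_2+b_2$ by $e^{-imt}c\,[\psi_+(y)]^*$, where the fact that the $x$- and $y$-sides produce the \emph{same} function up to a constant is forced by the self-adjoint symmetry in \eqref{eqn:symm res id} together with $(v^*S_1v)(x_1,y_1)=[v^*\phi](x_1)[\phi^*v](y_1)$); and (ii) identify $\psi_+$ with the canonical threshold resonance of Section~\ref{sec:spec}. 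Once this is done the product collapses, the phases combine as $e^{imt}e^{-2imt}=e^{-imt}$, and after matching constants one reads off the claimed formula for $F_t^+$; the bound $\|F_t^+\|_{L^1\to L^\infty}\lesssim|t|^{-\frac12}$ is then immediate from $\psi_+\in L^\infty$ and $|F_t^+(x,y)|\lesssim|t|^{-\frac12}\|\psi_+\|_\infty^2$.

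For step (i): on the effective support of $v(x_1)\phi(x_1)$ one has $|x_1|\lesssim\la x_1\ra$, so $e^{-im\sqrt{t^2-s^2}}=e^{-imt}+O(|t|^{-1}\la s\ra^2)$ and $\chi(s/t)=1+O_N(|t|^{-N}\la s\ra^N)$ for $|s|\lesssim\la x\ra+\la x_1\ra$; inserting these into $a_1,b_1$ and into $H_t(x_1,x)=\int_{|x|}^{|x-x_1|}\chi(s/t)e^{-im\sqrt{t^2-s^2}}\,ds$, and using $\int_{|x|}^{|x-x_1|}\la s\ra^2\,ds\lesssim\la x\ra^2\la x_1\ra^3$, gives $a_1(x,t)+b_1(x,t)=e^{-imt}\psi_+(x)+O(|t|^{-1}\la x\ra^2)$ with $\psi_+$ the $t$-independent function obtained by deleting the oscillatory factors. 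The terms linear in $|x|$ coming from $H_t$ drop out, because $\phi=Q\phi$ and the orthogonality \eqref{eqn:Q} force $(\beta+I)\int v^*(x_1)\phi(x_1)\,dx_1=0$; the same cancellation (applied pointwise as $|x|\to\infty$, where $\textrm{sgn}(x-x_1)$ and $|x-x_1|-|x|$ are constant on the support of $v^*\phi$) shows $\psi_+\in L^\infty$. The error budget here is $|t|^{-\frac12}\cdot O(|t|^{-1}\la x\ra^2\la y\ra^2)=O(|t|^{-\frac32}\la x\ra^2\la y\ra^2)$, which dominates the error inherited from Proposition~\ref{prop:nonreg}; the convergence of the relevant weighted integrals uses $|v(x)|\lesssim\la x\ra^{-\frac92-}$.

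Step (ii) is the main obstacle. One must verify $H\psi_+=m\psi_+$ in the distributional sense, i.e.\ $(D_m-m)\psi_+=-V\psi_+=-v^*Uv\psi_+$. The constant part of $\psi_+$ lies in $\ker(D_m-m)$ since $D_m(1,1)^T=m(1,1)^T$, and differentiating the integral part using $\partial_x\textrm{sgn}(x-x_1)=2\delta(x-x_1)$, $\partial_x|x-x_1|=\textrm{sgn}(x-x_1)$, and the anticommutation relations \eqref{eqn:anticomm} produces the local term together with terms involving $\textrm{sgn}(x-x_1)$; the latter are reconciled with $-v^*Uv\psi_+$ precisely through the relations defining $S_1L^2$, namely $QTQ\phi=0$ and $PT\phi=\kappa_0v(1,1)^T$ from Lemma~\ref{lem:S1 char} (equivalently $\phi=-Uv\psi_+$ after normalization), which also pins down the constant $c$ and the final coefficient $\frac{1}{2\pi i c_P\mathcal D}$. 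Carrying out this identification cleanly---so that the $\textrm{sgn}$-terms cancel exactly rather than leaving a finite-rank remainder, and so that every numerical factor hidden in $c_P$, $\mathcal D$ and $\kappa_0$ is accounted for---is the delicate part; the freezing in step (i) is routine once the oscillatory estimates of Lemma~\ref{lem:vdc} already used in Proposition~\ref{prop:nonreg} are in hand.
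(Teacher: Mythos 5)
Your step (i) reproduces the paper's argument essentially verbatim: Taylor expanding $\chi(x/t)e^{-im\sqrt{t^2-x^2}}=e^{-imt}+O(\la x\ra^2/t)$ and $H_t(x_1,x)=e^{-imt}(|x-x_1|-|x|)+O(\la x\ra^2\la x_1\ra^3/t)$ inside the functions $a_j,b_j$ of \eqref{eqn:rank one}, using \eqref{eqn:Q} (with $S_1\leq Q$) to discard the $|x|(\beta+I)$ contribution, and absorbing the errors into $|t|^{-\f32}\la x\ra^2\la y\ra^2$ under the decay $|v(x)|\les\la x\ra^{-\f92-}$. The gap is your step (ii), which you yourself flag as delicate and leave open: that identification is the entire content of the proposition beyond bookkeeping, and as written your proposal establishes neither that the frozen limit is the canonical resonance $\psi$ nor the value $\frac{1}{2\pi i c_P\mathcal D}$ of the constant.

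What you are missing is that no distributional verification of $H\psi_+=m\psi_+$ for the frozen function is needed, and no delicate cancellation of $\textrm{sgn}$-terms has to be engineered; all of this is already packaged in Lemma~\ref{lem:S1 char}. Once the orthogonality kills the $|x|$-term, the frozen kernel in $a_1$ is, by \eqref{eq:G0 defn}, a fixed multiple ($2i$ in the paper's normalization) of $G_0(x,x_1)$, so $a_1(x,t)=2ie^{-imt}[-G_0v^*\phi](x)+O(\la x\ra^2/t)$, and Lemma~\ref{lem:S1 char} already states that $-G_0v^*\phi=\psi-\kappa_0(1,1)^T$ with $\psi\in L^\infty$ a distributional solution of $H\psi=m\psi$. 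The frozen $b_1$ equals $2i\kappa_0 e^{-imt}(1,1)^T$ (using $-c_P m\|(a+b,c+d)^T\|_2^2=2i$), so the constants telescope and $a_1+b_1=2ie^{-imt}\psi(x)$; the $y$-side is identical up to conjugation because $S_1$ is the rank-one projection onto $\phi$, giving $a_2+b_2=2ie^{-imt}[\psi(y)]^*$. Then $(2i)^2\cdot\frac{-1}{8\pi i c_P\mathcal D}=\frac{1}{2\pi i c_P\mathcal D}$ and $e^{imt}e^{-2imt}=e^{-imt}$ yield exactly the stated $F_t^+$, and $\|F_t^+\|_{1\to\infty}\les|t|^{-\f12}$ follows from $\psi\in L^\infty$ as you say. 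Your alternative route for step (ii)---differentiating the frozen integral and reconciling the $\textrm{sgn}$-terms with $-V\psi_+$ via $QTQ\phi=0$ and $PT\phi=\kappa_0 v(1,1)^T$---would eventually succeed, but it amounts to re-proving Lemma~\ref{lem:S1 char}; until that computation, including the matching of $\kappa_0$, $c_P$ and $\mathcal D$, is actually carried out, the proposal stops short of the result.
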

A similar construction may be done for the negative threshold to obtain the operator $F_t^-$.  The rank at most two operator in the statement of Theorem~\ref{thm:main nonreg} is exactly ${F_t}=F_t^++F_t^-$.
\begin{proof}
Following the proof of Proposition~\ref{prop:nonreg}, we need only provide further detail in the construction of $F_t^+$.
Note that by Taylor expansion we have
$$
\chi(x/t)e^{-im\sqrt{t^2-x^2}}=e^{-imt}+O(\la x\ra^2/t), \textrm{ and }
$$ 
$$
H_t(x_1,x)=e^{-imt}(|x-x_1|-|x|)+O(\la x\ra^2\la x_1\ra^3/t).
$$
Inserting this into the functions $a_j, b_j$ in \eqref{eqn:rank one}, using $S_1\leq Q$ and \eqref{eqn:Q} we see 
\begin{align*}
	a_1(x,t)&=-e^{-imt}\int_{\R}  \big[ \alpha \,\textrm{sgn}(x-x_1)+  m(|x-x_1|-|x| ) (\beta+I) \big] v^*(x_1)\phi(x_1)\, dx_1 +O(\la x\ra^2\la x_1\ra^3/t)\\
	&=-e^{-imt}\int_{\R}  \big[ \alpha \,\textrm{sgn}(x-x_1)+  m|x-x_1| (\beta+I) \big] v^*(x_1)\phi(x_1)\, dx_1 +O(\la x\ra^2\la x_1\ra^3/t)\\
	&=2ie^{-imt}\int_{\R}  G_0(x,x_1) v^*(x_1)\phi(x_1)\, dx_1 +O(\la x\ra^2\la x_1\ra^3/t)
\end{align*} 
By Lemma~\ref{lem:S1 char}, $-G_0v^*\phi=\psi-\kappa_0(1,1)^T$.  So that
$$
	a_1(x,t)=2ie^{-imt}\big[ \psi(x)-\kappa_0(1,1)^T\big]+O(\la x\ra^2\la x_1\ra^3/t)
$$
Similarly, we see (up to the error term) that 
$a_2(y,t)= 2ie^{-imt}\big[ \psi(y)-\kappa_0(1,1)^T\big]^*$, 
$b_1(x,t)=2ie^{-imt} \kappa_0  (1,1)^T$, and 
$b_2(y,t)=2ie^{-imt} \big[\kappa_0  (1,1)^T\big]^*$.
Combining all these terms we obtain (up to the error term)
$$
	\eqref{eqn:rank one}=\frac{1}{2\pi i c_P \mathcal D}\frac{(-2\pi i)^{\frac12}e^{-imt}}{(mt)^{\frac12}} \psi(x) [\psi(y)]^*.
$$
\end{proof}

\begin{rmk}\label{rmk:neg branch}
	
	The analysis for the negative portion of the spectrum $(-\infty,-m]$ follows with minimal changes.  In the resolvent expansions in Section~\ref{sec:Minv} one uses the change of variables $\lambda=-\sqrt{m^2+z^2}$, while the projection $P$ will replaced by the projection operator $P_{-}$ with kernel
	$$
		P_{-}(x,y)=\|(a-b,c-d)^T\|_2^{-2} v(x)(\beta-I)v^*(y).
	$$
	
\end{rmk}

\section{Spectral subspaces associated to threshold obstructions}\label{sec:spec}

In this section we relate the subspace  $S_1L^2$ to distributional solutions of $H\psi=m\psi$ and the invertibility of operators that arise in the expansions for the spectral measure in Section~\ref{sec:Minv}. As usual we consider the positive threshold $\lambda=m$, the negative threshold analysis follows with minor modifications.  The calculations here follow the set-up established by Jensen and Nenciu in \cite{JN} for the one dimensional Schr\"odinger operator.

Recall that for $|v(x)|\les \la x\ra^{-\frac32-}$ regularity of $m$ is equivalent to the invertibility of $QTQ$.   We  relate the kernel, $S_1L^2$,  of  $QTQ$ to the distributional solutions of $H\psi=m\psi$ as follows. Let $\theta=\|(a+b,c+d)\|_2^{-1} v(1,1)^T$ be a canonical unit vector in $PL^2$.

\begin{lemma}\label{lem:S1 char}
	
	Assume $|v(x)|\les \la x\ra^{-2-}$. Then, if  $\phi \in S_1L^2(\R)\setminus\{0\}$, then $\phi=Uv\psi$ for some $\psi \in L^\infty(\R )\setminus\{0\}$ which is a distributional solution to $H\psi=m\psi$.  Further, $\psi=-G_0v^*\phi+\kappa_0(1,1)^T$, $\psi\in L^\infty(\R)$, with
	$$ \kappa_0=\la T\phi,v(1,1)^T\ra\|(a+b,c+d)\|_2^{-2}= \la T\phi,\theta\ra\| (a+b,c+d)\|_2^{-1}.$$ 
	Furthermore, $\psi$ cannot be an eigenvalue, i.e. $\psi\not\in L^2(\R)$.
\end{lemma}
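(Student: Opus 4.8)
Here is my plan for proving Lemma~\ref{lem:S1 char}.

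The plan is to read a bounded, non-$L^2$ distributional solution of $H\psi=m\psi$ directly off the algebra of $T=U+vG_0v^*$, and then rule out $L^2$ membership by a Volterra iteration at spatial infinity. First I would produce $\psi$. Since $|v(x)|\les\la x\ra^{-2-}$, non-regularity of $m$ means precisely that $QTQ$ fails to be invertible on $QL^2$ (Definition~\ref{def:regular}), and any $\phi\in S_1L^2\setminus\{0\}$ obeys $Q\phi=\phi$ and $QT\phi=QTQ\phi=0$, so $T\phi\in(QL^2)^\perp=PL^2=\mathrm{span}\{v(1,1)^T\}$; hence $T\phi=\kappa_0\,v(1,1)^T$ for a scalar $\kappa_0$. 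Pairing with $v(1,1)^T$ and using $\|v(1,1)^T\|_2=\|(a+b,c+d)^T\|_2$ gives $\kappa_0=\la T\phi,v(1,1)^T\ra\,\|(a+b,c+d)^T\|_2^{-2}$, and $v(1,1)^T=\|(a+b,c+d)^T\|_2\,\theta$ gives the second form. As $T=U+vG_0v^*$, the identity $T\phi=\kappa_0\,v(1,1)^T$ says $U\phi=v\psi$ with $\psi:=-G_0v^*\phi+\kappa_0(1,1)^T$, and since $U^2=I$ we conclude $\phi=Uv\psi$; in particular $\psi\ne0$ because $\phi\ne0$.

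Next I would check $\psi\in L^\infty$ and the equation. Cauchy--Schwarz with $|v|\les\la x\ra^{-2-}$ gives $v^*\phi\in L^1$ and $\la\cdot\ra v^*\phi\in L^1$. In $G_0v^*\phi$ the $\alpha\,\textrm{sgn}(x-y)$ term is bounded by $\|v^*\phi\|_1$; in the $(\beta+I)|x-y|$ term I would exploit $P\phi=0$, i.e. $(\beta+I)\int v^*(y)\phi(y)\,dy=0$, to replace $|x-y|$ by $|x-y|-|x|$, whose modulus is $\le|y|$, so that term is $\les\|\la\cdot\ra v^*\phi\|_1$; hence $\psi\in L^\infty$. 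For the equation, recall $G_0(x,y)=(D_m+mI)_x(-\tfrac12|x-y|)$, that $(D_m-mI)(D_m+mI)=-\partial_{xx}$, and that $-\partial_{xx}(-\tfrac12|x-y|)=\delta(x-y)$; thus $(D_m-mI)(G_0v^*\phi)=v^*\phi$ in $\mathcal{D}'$, while $(D_m-mI)(1,1)^T=0$ because $\beta(1,1)^T=(1,1)^T$. Combining, $(D_m-mI)\psi=-v^*\phi=-v^*Uv\psi=-V\psi$, that is $H\psi=m\psi$.

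The heart of the matter is $\psi\notin L^2$, which I expect to be the main obstacle. Assume $\psi\in L^2$. Dominated convergence in $\psi=-G_0v^*\phi+\kappa_0(1,1)^T$ (using $v^*\phi,\la\cdot\ra v^*\phi\in L^1$) shows that $G_0v^*\phi$, hence $\psi$, has finite limits at $\pm\infty$; membership in $L^2$ forces those limits to vanish, so $\psi(x)\to0$ as $x\to\pm\infty$. In components the equation reads $(\psi_1-\psi_2)'=-i(1,1)V\psi$ and $(\psi_1+\psi_2)'=2im(\psi_1-\psi_2)-i(1,-1)V\psi$; integrating from $x$ to $+\infty$ (valid since $\psi_1\pm\psi_2\to0$ and, with $\psi\in L^\infty$ and $|V(w)|\les\la w\ra^{-4-}$, both $|V\psi|$ and $(w-x)|V\psi|$ are integrable over $w\ge x\ge0$) produces a Volterra equation $\psi(x)=\int_x^\infty K(x,w)\,V(w)\psi(w)\,dw$ on $[0,\infty)$ with $|K(x,w)|\les\la w\ra$ for $w\ge x$, so $|K(x,w)V(w)|\le k(w)$ for some $k\in L^1([0,\infty))$. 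The $n$-fold iterate of the corresponding Volterra operator $\mathcal{K}$ has kernel bounded by $k(w)\|k\|_{L^1}^{n-1}/(n-1)!$, whence $\sup_x|\psi(x)|=\sup_x|(\mathcal{K}^n\psi)(x)|\le\|\psi\|_{L^\infty}\|k\|_{L^1}^n/(n-1)!\to0$, so $\psi\equiv0$ on $[0,\infty)$; as $\psi$ is a Lipschitz solution of the first-order system $\psi'=-i\alpha(mI-m\beta-V)\psi$, uniqueness gives $\psi\equiv0$ on $\R$, contradicting $\psi\ne0$. Therefore $\psi\notin L^2$.

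The delicate point throughout this last step is that the limiting ($V\equiv0$) coefficient matrix $-im\,\alpha(I-\beta)$ is nilpotent with kernel exactly $\mathrm{span}\{(1,1)^T\}$ rather than hyperbolic, so the unperturbed system has bounded non-decaying and linearly growing solutions but no nontrivial decaying one; the linear ($\la w\ra$) growth of the Volterra kernel is the price of this degeneracy, which is why the stronger decay $|v|\les\la x\ra^{-2-}$ (ensuring $\la\cdot\ra v^*\phi\in L^1$ and $\la\cdot\ra V\in L^1$) is imposed. Everything else — the algebra producing $\psi$ and $\kappa_0$, the $L^\infty$ bound, and the distributional identity — is routine once the representation $G_0=(D_m+mI)(-\tfrac12|x-y|)$ and the orthogonality $P\phi=0$ are in hand.
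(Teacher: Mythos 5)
Your proposal is correct and follows essentially the same route as the paper: the same algebraic identification $T\phi=\kappa_0 v(1,1)^T$ yielding $\psi=-G_0v^*\phi+\kappa_0(1,1)^T$ and $\phi=Uv\psi$, the same use of $G_0=(D_m+mI)(-\tfrac12|x-y|)$ and the orthogonality $(\beta+I)\int v^*\phi=0$ for the distributional equation and the $L^\infty$ bound, and the same Volterra iteration at spatial infinity (with kernel $\les \la y-x\ra |V(y)|\chi_{y>x}$) to exclude $L^2$ membership. The only difference is organizational: the paper reads off $\psi=\kappa(1,1)^T+\tfrac12\int_x^\infty[\cdots]V\psi\,dy$ directly and shows $\kappa\neq0$, whereas you argue by contradiction from vanishing limits at $\pm\infty$ and re-derive the Volterra equation by integrating the first-order system, which is the same estimate in slightly different packaging.
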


\begin{proof}
	
	Recalling that $S_1\leq Q$, we can see that $P\phi=0$ and $\int (\beta+I)v^*(y) \phi(y)\, dy=0$.  So, for $\phi \in QL^2$ we have (with $\kappa_0$ as in the statement above)
	\begin{align*}
	0&= T\phi-PT\phi=T\phi-\kappa_0 v(1,1)^T =U\phi+vG_0v^* \phi-\kappa_0 v(1,1)^T.
	\end{align*}
	Letting  
	\begin{align}\label{eqn:psi G0}
	\psi=-G_0v^*\phi+\kappa_0 (1,1)^T,
	\end{align}
	and using $U^2=I$, we have $\phi=Uv\psi$. 
	We claim that $H\psi=m\psi$.  Namely, we consider
	\begin{align*}
	(H-mI)\psi &=(D_m-mI)\psi+V\psi=-(D_m-mI)G_0v^*\phi+v^*Uv\psi\\
	&=-(D_m-mI)G_0v^*\phi+v^*\phi.
	\end{align*}
	Here we used that $(D_m-mI)(1,1)^T=0$.  Our claim is proven provided   that $(D_m-mI)G_0v^*\phi=v^*\phi$.
	
	Noting \eqref{eq:G0 defn} we have
	\begin{align*}
		(D_m-mI)G_0v^*\phi=(D_m-mI)(D_m+mI)\bigg(\frac{-|x-y|}{2}\bigg) v^*\phi=(-\partial_{xx})\bigg(\frac{-|x-y|}{2}\bigg)v^*\phi.
	\end{align*}
	We may conclude this is equal to $v^*\phi$ provide $v^*\phi \in L^{1,1}$.

	We now show $\psi\in L^\infty$ as claimed.  Using \eqref{eqn:psi G0}, the constant vector is obviously bounded, we consider only the first portion.  In the sense of distributions, we have 
	\begin{align*}
	G_0v^*\phi&=(D_m+mI)\bigg(\frac{-|x-y|}{2}\bigg)v^*\phi\\
	&=-\frac{1}{4}(i\alpha \partial_x +m(\beta+I))\int_{\R} |x-y| v^*(y)\phi(y)\, dy\\
	&=-\frac{1}{4}(i\alpha \partial_x +m(\beta+I))\bigg[ \int_{-\infty}^x (x-y)v^{*}(y)\phi(y)\, dy+\int_x^{\infty} (y-x)v^{*}(y)\phi(y)\, dy \bigg]\\
	&=-\frac{i}{4}\alpha \bigg[\int_{-\infty}^x  v^{*}(y)\phi(y)\, dy-\int_x^{\infty}  v^{*}(y)\phi(y)\, dy  \bigg]- \frac{m}{4}  (\beta+I)\int_{\R} |x-y|v^*(y)\phi(y)\, dy.
	\end{align*}
	Here we note that since $\phi \in QL^2$, we must have that
	\begin{align*}
	-\frac{i}{4} \int_{\R} v^*(y)\phi(y)\, dy =\kappa_1 (1,-1)^T,
	\end{align*}
	for some constant $\kappa_1$.  This follows since
	$$
	(\beta+I)\int_{\R} v^*(y)\phi(y)\, dy= (0,0)^T.
	$$
	Further note that $ \alpha (1,-1)^T=-   (1,1)^T$.
	Using this, we write
	\begin{align*}
	\frac{i}{4}\alpha \bigg[\int_{-\infty}^x  v^{*}(y)\phi(y)\, dy-\int_x^{\infty}  v^{*}(y)\phi(y)\, dy  \bigg]
	&=\kappa_1 (1,1)^T  -\frac{i}{2}\alpha \int_x^\infty v^{*}(y)\phi(y)\, dy. 
	\end{align*}
	Similarly, if we denote the constant vector $  u=\int_{\R}y v^*(y)\phi(y)\, dy$ we have
	\begin{multline*}
	(\beta+I)\int_{\R} |x-y|v^*(y)\phi(y)\, dy\\
	= (\beta+I)\bigg[\int_{x}^\infty (y-x) v^*(y)\phi(y)\, dy+\int_{-\infty}^x (x-y) v^*(y)\phi(y)\, dy  \bigg]\\
	= (\beta+I) \big( 2\int_x^\infty (y-x)v^*(y)\phi(y)\, dy-u \big)
	\\=\kappa_2 (1,1)^T  + 2 (\beta+I)  
	\int_x^\infty (y-x)v^*(y)\phi(y)\, dy.
	\end{multline*}
	Combining all these facts, with some constant $\kappa$, we may write:
	\begin{align}\label{eqn:psi form}
	\psi=\kappa (1,1)^T + \frac{1}{2}   \int_x^\infty [-  m(y-x)(\beta+I)+i\alpha]v^*(y)\phi(y)\, dy  .
	\end{align} 
	This proves that $\psi $  is  bounded on $x>0$  provided that $v^*\phi \in L^{1,1}$ since $|y-x|\leq |y|$ on this domain.  A  similar argument proves that $\psi$ is also bounded on $x<0$ by writing  $\int_x^\infty=\int_\R-\int_{-\infty}^x $. 
 Now we prove that $\psi$ cannot be in $L^2\setminus \{0\}$.    
	Recalling that $\phi=Uv\psi$, we write 
	\begin{align*}
	\psi=\kappa(1,1)^T +\frac{1}{2} 
	\int_x^\infty [ -m(\beta+I)(y-x)+i\alpha]V(y)\psi(y)\, dy .
	\end{align*} 
	Note that if $\kappa=0$, then $\psi$ satisfies
	\begin{equation}\label{volter}
	\psi(x)=\int K(x,y) \psi(y) dy,
	\end{equation}
	where $|K(x,y)|\les  \la y-x\ra  |V(y)| \chi_{y>x}.$ By a Volterra integral argument this implies that $\psi\equiv 0$ provided that $|V(y)|\les \la y\ra^{-2-}$. Therefore $\psi\to \kappa(1,1)^T\neq 0$  as $x\to\infty$, and hence it cannot be in $L^2$.  
\end{proof}	
	
	We note that, denoting $PT\phi=\kappa_0v(1,1)^T$, $i\alpha \int v^*(y)\phi(y)\, dy=\kappa_1(1,1)^T$ and
	$u=(u_1,u_2)^T:=\int y  v^*(y)\phi(y)\, dy$ one obtains the expansion
	\begin{align}\label{eqn:psi for B1}
		\psi=\bigg[ \kappa_0-\frac{\kappa_1}{2}+\frac{m}{2}(\beta+I)u \bigg]  (1,1)^T +\frac{1}{2} \int_{-\infty}^x (i\alpha+m(\beta+I)(x-y))v^*(y)\phi(y)\, dy.
	\end{align}
	This follows by writing 
	\begin{multline*}
		\int_{\R} \textrm{sgn}(x-y)v^*(y)\phi(y)\, dy\\
		=	\int_{-\infty}^x  v^{*}(y)\phi(y)\, dy-\int_x^{\infty}  v^{*}(y)\phi(y)\, dy	=-\int_{\R}v^*(y)\phi(y)\,dy+2\int_{-\infty}^x v^*(y)\phi(y)\,dy,
	\end{multline*}
	with a similar expansion for the integral involving $|x-y|v^*(y)\phi(y)$ while noting that $(\beta+I)\int v^*(y)\phi(y)\, dy=0$.

	The analysis above gives the following structure for $\psi$: for some $\psi_1\in L^\infty$, some constants $c_1,c_2$, and $\epsilon>0$
	\begin{equation}\label{weirdpsi}
	\psi = (1,1)^T[c_1\widetilde\chi_{x>0}+c_2\widetilde\chi_{x<0}]+\la x\ra^{-\epsilon} \psi_1(x).
	\end{equation} 

	\begin{lemma} If    $\psi\neq 0$ satisfying \eqref{weirdpsi}  is a distributional solution of $H\psi=m\psi$, then $ \phi=Uv\psi\in S_1L^2$,  the kernel  of $QTQ$.
	\end{lemma}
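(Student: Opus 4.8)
The statement is the converse of Lemma~\ref{lem:S1 char}, so the plan is to run that argument in reverse. Set $\phi:=Uv\psi$. Under the decay hypothesis in force (say $|v(x)|\les\la x\ra^{-2-}$) and since $\psi\in L^\infty$, we immediately have $\phi\in L^2$ and $v^*\phi=v^*Uv\psi=V\psi\in L^{1,1}$. The distributional equation $H\psi=m\psi$ reads $(D_m-mI)\psi=-V\psi=-v^*\phi$, and the idea is to combine this with the identity $(D_m-mI)G_0(v^*\phi)=v^*\phi$ — valid since $v^*\phi\in L^{1,1}$, exactly as in the proof of Lemma~\ref{lem:S1 char} — to pin $\psi$ down and then read off that $\phi\in S_1L^2$. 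Concretely there are three things to establish: $\phi\in QL^2$; the identity $\psi=\kappa_0(1,1)^T-G_0v^*\phi$ for some constant $\kappa_0\in\C$; and finally $QTQ\phi=0$.

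\textbf{The main step is $\phi\in QL^2$}, equivalently $P\phi=0$, i.e.\ $(\beta+I)\int_\R v^*(y)\phi(y)\,dy=0$; this is the only place the prescribed asymptotic form \eqref{weirdpsi} of $\psi$ is used. From the anticommutation relations \eqref{eqn:anticomm} and $\beta^2=I$ one obtains the algebraic identity $(\beta+I)(D_m-mI)=-i\alpha(\beta-I)\partial_x$. Applying $(\beta+I)$ to $(D_m-mI)\psi=-v^*\phi$ and using that $\beta-I$ is a constant matrix,
\[
(\beta+I)v^*\phi=i\alpha(\beta-I)\partial_x\psi=i\alpha\,\partial_x\big[(\beta-I)\psi\big].
\]
The left side lies in $L^1$, so $(\beta-I)\psi$ has an $L^1$ distributional derivative, and hence $\int_\R\partial_x[(\beta-I)\psi]\,dx$ equals the difference of its traces at $\pm\infty$; by \eqref{weirdpsi} those traces are multiples of $(\beta-I)(1,1)^T=0$, hence zero. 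Integrating the displayed identity over $\R$ yields $(\beta+I)\int_\R v^*\phi\,dy=0$, i.e.\ $\phi\in QL^2$.

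Next, since $v^*\phi\in L^{1,1}$, the function $w:=\psi+G_0v^*\phi$ satisfies $(D_m-mI)w=-v^*\phi+v^*\phi=0$ distributionally; moreover, using the orthogonality just proved together with the explicit kernel $G_0(x,y)=-\tfrac{i}{2}\alpha\,\textrm{sgn}(x-y)-\tfrac{m}{2}(\beta+I)|x-y|$ one sees $G_0v^*\phi\in L^\infty$ — exactly as in the proof of Lemma~\ref{lem:S1 char}, the only potentially linearly growing term carries the vanishing factor $(\beta+I)\int v^*\phi\,dy$ — so $w\in L^\infty$. Since the bounded distributional solutions of $(D_m-mI)w=0$ are the constant multiples of $(1,1)^T$ (the associated first-order system has nilpotent coefficient matrix, so solutions are affine and boundedness forces the affine part to vanish and the constant vector to be killed by $\beta-I$), we get $\psi=\kappa_0(1,1)^T-G_0v^*\phi$. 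Finally, using $U\phi=U^2v\psi=v\psi$ and $G_0v^*\phi=\kappa_0(1,1)^T-\psi$,
\[
T\phi=U\phi+vG_0v^*\phi=v\psi+v\big(\kappa_0(1,1)^T-\psi\big)=\kappa_0\,v(1,1)^T ,
\]
and since $v(1,1)^T=(a+b,c+d)^T$ spans the range of $P$ this gives $T\phi\in PL^2$, whence $QT\phi=0$; combined with $Q\phi=\phi$ this is $QTQ\phi=0$, i.e.\ $\phi\in S_1L^2$.

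I expect the membership $\phi\in QL^2$ to be the only genuine obstacle, since everything after it is the algebra of Lemma~\ref{lem:S1 char} read backwards. The point there is to convert the asymptotic information \eqref{weirdpsi} into the orthogonality $P\phi=0$; the identity $(\beta+I)(D_m-mI)=-i\alpha(\beta-I)\partial_x$ and the vanishing $(\beta-I)(1,1)^T=0$ are what drive it, and the one delicate issue is justifying the integration by parts — that $(\beta-I)\psi$ has an absolutely continuous representative with vanishing traces at $\pm\infty$.
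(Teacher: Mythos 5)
Your proof is correct and follows essentially the same route as the paper's: you first extract the orthogonality $P\phi=0$ from the equation together with \eqref{weirdpsi} (via $(\beta+I)(\beta-I)=0$ and the fact that $(1,1)^T$ is killed by the relevant matrix factor), and then identify $\psi+G_0V\psi$ as a bounded distributional solution of $(D_m-mI)g=0$, hence a constant multiple of $(1,1)^T$, which yields $QTQ\phi=0$. The only difference is cosmetic: where you integrate the identity $(\beta+I)v^*\phi=i\alpha\,\partial_x[(\beta-I)\psi]$ directly, using that an $L^1$ distributional derivative gives an absolutely continuous representative with vanishing traces at $\pm\infty$, the paper tests against dilated functions $g(\delta y)$ and passes to the limit by dominated convergence — the same cancellation either way.
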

	\begin{proof}
	 First we need to see that $P\phi=0$, i.e. we need to establish
	$$
	(\beta+I) \int (v^* Uv)(y) \psi(y)dy=(\beta+I)\int V(y) \psi(y) dy =0.
	$$
	Using $(\beta+I)(\beta-I) =0$, $(\beta+I)\alpha (1,1)^T=0$, \eqref{weirdpsi}, and noting that 
	$$0=(H-mI)\psi=  [i\alpha \partial_x +m(\beta-I)]\psi +V\psi,$$
	 we conclude that 
	$$
	(\beta+I)V(x)\psi(x) = - i(\beta+I)\alpha\partial_x  (\la x\ra^{-\epsilon}\psi_1(x)). 
	$$ 
	Therefore for any test function $g$ with $g(0)=1$, we have 
	$$
	\int (\beta+I)V(y)\psi(y) g(\delta y) dy =  i(\beta+I)\alpha \int \la y\ra^{-\epsilon} \psi_1(y) \delta g^\prime(\delta y) dy=i(\beta+I)\alpha \int \la y/\delta \ra^{-\epsilon} \psi_1(y/\delta)  g^\prime(  y) dy \to 0,
	$$
	as $\delta \to 0^+$. Therefore by Lebesgue dominated convergence theorem, we conclude that $
	\int (\beta+I)V(y)\psi(y) dy =0$ as needed. 
	
	Now,
	$$QTQ \phi=Q T\phi=Q(U+vG_0v^*)Uv\psi= Qv(\psi+G_0V\psi).$$
	Note that $(D_m-mI)(\psi+G_0V\psi) = -V\psi +(D_m-mI)G_0V\psi=0$ since $V\psi\in L^{1,1}$. Therefore $g := \psi+G_0V\psi$ is a  distributional solution of $i\alpha \partial_x g+m(\beta-I) g =0$. Since $[\alpha(\beta-I)]^2=0$, this implies that   $g(x)=[I+im\alpha(\beta-I) x]u$  for some constant vector $u$. Note that, by the proof of previous lemma, $G_0V\psi=G_0v^*\phi$ is bounded, and hence  $g$ is bounded. We conclude that  $g=u= c(1,1)^T  $, and hence $Qv g=0$. 
	
\end{proof}

\begin{corollary}\label{cor:S1}
	
	Assume $|v(x)|\les \la x\ra^{-2-}$. Then, $S_1L^2(\R)$ is at most one dimensional.
	
\end{corollary}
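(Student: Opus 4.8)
The statement follows quickly once Lemma~\ref{lem:S1 char} is in hand, so the plan is to package that lemma into a single injective linear map from $S_1L^2(\R)$ into a one-dimensional space. By Lemma~\ref{lem:S1 char}, the assignment $\phi\mapsto\psi_\phi:=-G_0v^*\phi+\kappa_0(\phi)(1,1)^T$, with $\kappa_0(\phi)$ the constant $\kappa_0$ of that lemma, is a \emph{linear} map $S_1L^2\to L^\infty(\R)$ (linearity is clear since $T$ and the inner product in $\kappa_0$ are linear in $\phi$) whose image consists of distributional solutions of $H\psi=m\psi$, and it is injective because $\phi=Uv\psi_\phi$ forces $\psi_\phi\equiv 0\Rightarrow\phi=0$. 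Hence it suffices to show that the space $\mathcal V$ of these solutions has dimension at most one; I would do this by means of the evaluation-at-$+\infty$ functional.

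For $\phi\in S_1L^2$ we have $v^*\phi=v^*Uv\psi_\phi=V\psi_\phi$, so \eqref{eqn:psi form} becomes
\[
\psi_\phi(x)=\kappa(\phi)(1,1)^T+\tfrac12\int_x^\infty\big[-m(y-x)(\beta+I)+i\alpha\big]V(y)\psi_\phi(y)\,dy
\]
for a constant $\kappa(\phi)$ depending linearly on $\phi$. Since $|v(x)|\les\la x\ra^{-2-}$ gives $\la y\ra\, V(y)\psi_\phi(y)\in L^1$ (using $\psi_\phi\in L^\infty$ and $|V|\les|v|^2\les\la x\ra^{-4-}$), the integral tends to $0$ as $x\to+\infty$, so $\psi_\phi(x)\to\kappa(\phi)(1,1)^T$. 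Thus $\ell(\phi):=\kappa(\phi)$ is a well-defined linear functional on $S_1L^2$, and it remains only to check that $\ker\ell=\{0\}$: a linear map from $S_1L^2$ into the one-dimensional space $\C$ with trivial kernel forces $\dim S_1L^2\le 1$.

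To see that $\ell$ is injective, suppose $\ell(\phi)=0$, i.e. $\kappa(\phi)=0$ in the displayed identity. Then $\psi_\phi$ satisfies a Volterra equation $\psi_\phi(x)=\int_x^\infty K(x,y)\psi_\phi(y)\,dy$ with $|K(x,y)|\les\la y-x\ra|V(y)|\chi_{y>x}$, exactly the situation of \eqref{volter}. Choosing $x_0$ so large that $\int_{x_0}^\infty\la y\ra|V(y)|\,dy$ is small makes the associated integral operator a contraction on $L^\infty([x_0,\infty);\C^2)$, and since $\psi_\phi$ is a bounded fixed point it must vanish on $[x_0,\infty)$; then uniqueness for the first-order system $i\alpha\psi'=(m(I-\beta)-V)\psi$ — which is equivalent to $H\psi=m\psi$, $\alpha$ being invertible, and whose solutions are automatically absolutely continuous because the right-hand side lies in $L^\infty_{\rm loc}$ — propagates this to $\psi_\phi\equiv 0$ on $\R$, whence $\phi=Uv\psi_\phi=0$. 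This proves $\ker\ell=\{0\}$ and hence $\dim S_1L^2(\R)\le 1$.

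The bulk of the work has already been done in Lemma~\ref{lem:S1 char} and the lemma preceding Corollary~\ref{cor:S1}; the only point I would treat with care — and the step I regard as the crux — is the Volterra uniqueness argument, where one must weigh the linear growth $\la y-x\ra$ of the kernel against the decay of $V$, together with the attendant regularity remark that an $L^\infty$ distributional solution of $H\psi=m\psi$ is an absolutely continuous solution of the associated first-order ODE, so that standard ODE uniqueness applies.
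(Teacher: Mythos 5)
Your proof is correct and is essentially the paper's own argument in different packaging: the paper takes two resonance functions $\psi_1,\psi_2$ of the form \eqref{eqn:psi form} and chooses $d\neq 0$ so that $\psi_1+d\psi_2$ has vanishing constant term, hence satisfies \eqref{volter} and vanishes by the Volterra argument, which is precisely your statement that the evaluation-at-$+\infty$ functional $\phi\mapsto\kappa(\phi)$ is a linear map of $S_1L^2$ into $\C$ with trivial kernel. The Volterra uniqueness step you identify as the crux is the same one already carried out in the proof of Lemma~\ref{lem:S1 char}.
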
	

\begin{proof}	
	
	To see that the resonance space is at most one dimensional, take $\psi_1$, $\psi_2$ as in \eqref{eqn:psi form}. We see that $\psi_1 +d\psi_2$  satisfies \eqref{volter} for some $d\neq 0$. Therefore, it vanishes by the Volterra argument. This implies that the resonance space is at most one dimensional. 
	
\end{proof}

\begin{lemma}\label{prop:B1 inv}
	
	If $|V(x)|\les \la x \ra^{-2-}$ and $S_1\neq 0$, then
	$$
	 |\kappa_0|^2 - \frac{im}{2}  \text{Tr}(S_1M_1S_1)  \neq 0.
	$$ 
	Consequently, we have $hk-|\ell|^2\neq 0$ in \eqref{eq:det}.
	
\end{lemma}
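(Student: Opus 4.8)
The plan is to tie the quantity to the canonical resonance function from Section~\ref{sec:spec}. Since $\dim S_1L^2\le 1$ by Corollary~\ref{cor:S1}, assume $S_1\neq 0$ and fix a unit $\phi\in S_1L^2$, so that $S_1$ has kernel $\phi(x)\phi^*(y)$ and, with $M_1=vG_1v^*$ from Lemma~\ref{Mplus},
$$\textrm{Tr}(S_1M_1S_1)=\la M_1\phi,\phi\ra=\la G_1w,w\ra,\qquad w:=v^*\phi.$$
By Lemma~\ref{lem:S1 char}, $\phi=Uv\psi$ where $\psi=-G_0v^*\phi+\kappa_0(1,1)^T\in L^\infty\setminus L^2$ is the resonance (so $w=V\psi$), and, as in that proof, $P\phi=0$ gives the orthogonality $(\beta+I)\int_\R w\,dy=0$, i.e.\ $\int_\R(w_1+w_2)\,dy=0$. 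Put $u_0:=\int_\R w\,dy$; then $(\beta+I)u_0=0$ forces $u_0=a(1,-1)^T$ for some $a\in\C$, and set $\sigma:=\int_\R y\,(w_1(y)+w_2(y))\,dy$, $u_1:=\int_\R y\,w(y)\,dy$. (The moment integrals are used under the decay of $v$ that renders $M_1$ meaningful, interpreted as limits of symmetric truncations if needed; in every application $|v(x)|\les\la x\ra^{-\frac92-}$ and this is automatic.)

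First I would evaluate $\la G_1w,w\ra$ directly from \eqref{eq:G1 defn}. Using $\beta+I=(1,1)^T(1,1)$ and $\alpha u_0=-a(1,1)^T$: the constant term $\tfrac{i}{4m}I$ contributes $\tfrac{i}{4m}|u_0|^2=\tfrac{i|a|^2}{2m}$; in the $-\tfrac{im}{4}(\beta+I)|x-y|^2$ term the orthogonality $\int(w_1+w_2)=0$ annihilates the $x^2$ and $y^2$ pieces, leaving only a cross term equal to $\tfrac{im}{2}|\sigma|^2$; and the $\tfrac12\alpha(x-y)$ term contributes $i\,\textrm{Im}(u_1^*\alpha u_0)=-i\,\textrm{Im}(a\bar\sigma)$. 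Collecting,
$$\la G_1w,w\ra=\frac{i|a|^2}{2m}+\frac{im|\sigma|^2}{2}-i\,\textrm{Im}(a\bar\sigma)=\frac{i}{2m}\,|a-im\sigma|^2,$$
hence $\tfrac{im}{2}\,\textrm{Tr}(S_1M_1S_1)=-\tfrac14|a-im\sigma|^2\le 0$ and
$$|\kappa_0|^2-\frac{im}{2}\,\textrm{Tr}(S_1M_1S_1)=|\kappa_0|^2+\tfrac14|a-im\sigma|^2\ge 0,$$
with equality iff $\kappa_0=0$ and $a=im\sigma$.

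To exclude that case I would use the boundary value of $\psi$ at $+\infty$. From $\psi=\kappa_0(1,1)^T-G_0w$ and \eqref{eq:G0 defn}, write $G_0w(x)=-\tfrac{i}{2}\alpha W'(x)-\tfrac{m}{2}(\beta+I)W(x)$ with $W(x)=\int|x-y|w(y)\,dy$; as $x\to+\infty$, $W'(x)\to u_0$ and $W(x)=xu_0-u_1+o(1)$, and the nominally divergent term $x(\beta+I)u_0$ drops because $(\beta+I)u_0=0$, so $G_0w(x)\to(\tfrac{ia}{2}+\tfrac{m\sigma}{2})(1,1)^T=\tfrac{i}{2}(a-im\sigma)(1,1)^T$. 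Thus $\psi(x)\to\kappa(1,1)^T$ with $\kappa=\kappa_0-\tfrac{i}{2}(a-im\sigma)$; in the equality case $\kappa_0=0$, $a=im\sigma$ this gives $\kappa=0$, whence the Volterra argument in the proof of Lemma~\ref{lem:S1 char} forces $\psi\equiv 0$ and so $\phi=Uv\psi=0$, a contradiction. This proves $|\kappa_0|^2-\tfrac{im}{2}\,\textrm{Tr}(S_1M_1S_1)\neq 0$. For the consequence, $S_1\neq 0$ forces $(a+b,c+d)^T\not\equiv 0$ (otherwise $P=0$ and $m$ is regular, by the remark following \eqref{eq:g defn}), so $\|(a+b,c+d)^T\|_2>0$, and by \eqref{eq:det} the leading $z$-independent term of $hk-|\ell|^2$ equals $-\|(a+b,c+d)^T\|_2^2\big(|\kappa_0|^2-\tfrac{im}{2}\textrm{Tr}(S_1M_1S_1)\big)\neq 0$, so $hk-|\ell|^2\neq 0$ for all small $z\neq 0$.

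The only genuine computation is the evaluation of $\la G_1w,w\ra$: keeping the $\C^2$ matrix bookkeeping of $\beta+I$ and $\alpha$ straight while using the single orthogonality $\int(w_1+w_2)=0$ to collapse the quadratic kernel into the square $\tfrac{i}{2m}|a-im\sigma|^2$, together with the parallel asymptotics of $G_0w$ where the divergent $x\,u_0$ contribution cancels. The conceptual heart, and the step most likely to look coincidental until one carries it out, is that the two equality conditions $\kappa_0=0$ and $a=im\sigma$ are exactly what makes $\kappa=\kappa_0-\tfrac{i}{2}(a-im\sigma)$ vanish, which a genuine threshold resonance cannot do.
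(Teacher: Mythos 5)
Your proof is correct and follows essentially the same route as the paper: you compute $\mathrm{Tr}(S_1M_1S_1)=\la G_1 v^*\phi, v^*\phi\ra$ term by term using the orthogonality $(\beta+I)\int v^*\phi\,dy=0$, arrive at the same sum-of-squares identity (your $\tfrac14|a-im\sigma|^2$ is the paper's $|-\tfrac{\kappa_1}{2}+\tfrac m2(u_1+u_2)|^2$ under $a=i\kappa_1$, $\sigma=u_1+u_2$), and exclude the degenerate case by observing that it forces the asymptotic constant of the resonance $\psi$ to vanish, so the Volterra argument of Lemma~\ref{lem:S1 char} gives $\psi\equiv 0$, contradicting $S_1\neq 0$. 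The only cosmetic difference is that you read off that constant from the limit of $\psi$ at $+\infty$ rather than from the representation \eqref{eqn:psi for B1} at $-\infty$, which the paper uses.
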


\begin{proof}
	
	By Corollary~\ref{cor:S1}, $S_1L^2$ is a one dimensional subspace.  We showed that if $\phi\in S_1L^2$ then $\psi =-G_0v^*\phi +\kappa_0(1,1)^T$ is a solution to $H\psi=m\psi$.  Recall \eqref{eqn:psi for B1}, with 
	$PT\phi=\kappa_0v(1,1)^T$, $i\alpha \int v^*(y)\phi(y)\, dy=\kappa_1(1,1)^T$ and
	$u=(u_1,u_2)^T:=\int y  v^*(y)\phi(y)\, dy$. From the proof of Lemma~\ref{lem:S1 char}, by the Volterra integral argument
	under the decay assumptions on $V$, we must have that $\kappa_0-\frac{\kappa_1}{2}+\frac{m}{2}(u_1+u_2)\neq 0$ or $\psi \equiv 0$ and $S_1=0$.

	We first  consider the contribution of Tr$(S_1M_1S_1)$.  Recalling that $M_1=vG_1v^*$ and \eqref{eq:G1 defn}, 
	$$
	G_1(x,y) = \frac{1}2  \alpha  (x-y)+ \frac{-im}{4} (\beta+I)|x-y|^2+\frac{i}{4m}I,
	$$
	we look at 
	\begin{multline}\label{eqn:B1 c1}
	\frac12 \int \phi^*(x)v(x)  \alpha (x-y) v^*(y)\phi(y)\, dy \, dx\\
	=\frac{-i}2\bigg( \int x\phi^*(x)v(x)\, dx  \bigg) \bigg( \int i\alpha v^{*}(y)\phi(y)\, dy \bigg)+\frac{-i}2 \bigg(- i \alpha \int x\phi^*(x)v(x)\, dx  \bigg) \bigg( \int   v^{*}(y)\phi(y)\, dy \bigg)\\
	=\frac{-i}2u^* \kappa_1 (1,1)^T+\frac{-i}2 \big[\kappa_1 (1,1)^T\big]^*u=\frac{-i}2\kappa_1(\overline{u_1+u_2})+\frac{-i}2\overline{\kappa_1}(u_1+u_2).
	\end{multline}
	Now, noting that $S_1\leq Q$, using \eqref{eqn:Q} we have 
	$$
	\int x^2\phi^*(x)v(x)(\beta+I)v^*(y)\phi(y)\, dy\, dx=0.
	$$
	Writing $|x-y|^2=x^2-2xy+y^2$ we see that
	\begin{multline}\label{eqn:B1 xy2}
	\frac{-im}{4} \int \phi^*(x)v(x)(\beta+I)|x-y|^2v^*(y)\phi(y)\, dy\, dx\\
	=\frac{ im}{2} \int x\phi^*(x)v(x)(\beta+I)y v^*(y)\phi(y)\, dy\, dx
	=\frac{ im}{2}u^* (\beta+I) u=\frac{ im}{2}|u_1+u_2|^2.
	\end{multline}
	Finally, we have
	\begin{align}\label{eqn:B1 Id}
	\frac{i}{4m}\int \phi^*(x)v(x)I v^*(y)\phi(y)\, dy\, dx
	=\frac{i}{4m}\big( i\kappa_1 (1,1)^T \big)^* \big( i\kappa_1 (1,1)^T\big) =\frac{i}{2m} |\kappa_1|^2.
	\end{align} 
	Now, combining \eqref{eqn:B1 c1}, \eqref{eqn:B1 xy2} \eqref{eqn:B1 Id}, we see that
	\begin{multline*}
	|\kappa_0|^2-\frac{im}{2}\text{Tr} (S_1M_1S_1) 
	=  |\kappa_0|^2 -\frac{m\kappa_1}{4}(\overline{u_1+u_2})-\frac{m\overline{\kappa_1}}{4} (u_1+u_2)+\frac{m^2}{4}|u_1+u_2|^2+\frac{1}{4}|\kappa_1|^2 \\
	= |\kappa_0|^2+ \bigg|-\frac{\kappa_1}{2}+\frac{m}{2}(u_1+u_2)  \bigg|^2 .
	\end{multline*}
	Now, for this to be zero we need  both $\kappa_0=0$ and $-\frac{\kappa_1}{2}+\frac{m}{2}(u_1+u_2)= 0$, hence implying that $\kappa_0-\frac{\kappa_1}{2}+\frac{m}{2}(u_1+u_2)=0$, which is a contradiction. 
\end{proof}

Finally, as in \cite{JN},  we construct an example of a resonance and eigenfunction with explicit potential hence demonstrating that such obstructions exist for the type of potentials considered here.

\begin{example} The function $
	\psi_+(x)=   e^{-\la x \ra^{\delta}}(1,1)^T $
	with self-adjoint potential
	$$
	V(x)=\begin{pmatrix}
	0 & i \delta x\la x\ra^{\delta-2}\\ -i \delta x\la x\ra^{\delta-2} & 0
	\end{pmatrix}
	$$
	is a distributional solution to $H\psi_+=m\psi_+$.  Hence $\psi_+ $ is a  resonance if $\delta<0$, $\psi_+ \to (1,1)^T$ as $|x|\to \infty$ and $|V(x)|\les \la x\ra^{\delta-1}$.  If $0<\delta<1$, $\psi_+$ is an eigenfunction.
	
	Similarly   $ \psi_-(x)= e^{-\la x \ra^{\delta}}(-1,1)^T $
	with self-adjoint potential
	$$
	V(x)=\begin{pmatrix}
	0 & -i \delta x\la x\ra^{\delta-2}\\  i \delta x\la x\ra^{\delta-2} & 0
	\end{pmatrix}
	$$
	is a distributional solution to $H\psi_-=-m\psi_-$.
	
\end{example}

\section{Limiting absorption principle}\label{sec:LAP}
In this section we prove Theorem~\ref{cor:uniform LAP} to obtain a limiting absorption principle that is uniformly bounded on the continuous spectrum.
We begin with energies close to the threshold $m$. We only work with $\mathcal R_0^+$ and drop the $\pm$ signs as usual.  Using the expansions and tools developed in Sections~\ref{sec:Minv} and \ref{sec:low disp}, one can easily obtain a limiting absorption principle for $\sigma>\f32$.  To obtain the sharper bound of Theorem~\ref{cor:uniform LAP}, we modify the argument to first prove
\begin{lemma}\label{lem:LAPlow}
Assume that $|V(x)|\les \la x\ra^{-3-}$, and that $m$ is a regular point of the spectrum. Then
for all $\sigma>1$ we have
$$
\sup_{0<|z|<z_0} \|\mathcal R_V(z)\|_{L^{2,\sigma}\to L^{2,-\sigma}}\les 1.
$$
\end{lemma}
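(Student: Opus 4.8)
The plan is to combine the symmetric resolvent identity \eqref{eqn:symm res id} with the small-$z$ expansion of $M^{-1}(z)$ from Proposition~\ref{Minversenew}, and to isolate a single cancellation that upgrades the admissible weight from $\sigma>\tfrac32$ (the ``easy'' range) to $\sigma>1$. Since $|V(x)|\les\la x\ra^{-3-}$ forces $|v(x)|\les\la x\ra^{-\frac32-}$, Proposition~\ref{Minversenew} applies and gives, for $0<|z|<z_0$,
$$
M^{-1}(z)= c_P z P + z^2\Lambda_0(z)+ z\Lambda_1(z)Q + zQ\Lambda_2(z) + Q\Lambda_3(z)Q,
$$
with $\Lambda_j=\Gamma_0^0$, in particular $\|\Lambda_j(z)\|_{L^2\to L^2}\les 1$ uniformly in $z$. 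One inserts this into \eqref{eqn:symm res id} and expands each outer free resolvent via \eqref{eq:R0 R1}, $\mathcal R_0(z)(x,y)=\frac{im}{2z}(\beta+I)e^{iz|x-y|}+e^{iz|x-y|}\mathcal R_1(z)(x,y)$, where $|\partial_z^k\mathcal R_1(z)(x,y)|\les1$. One first records that $\|v\mathcal R_0(z)\|_{L^{2,\sigma}\to L^2}$ and $\|\mathcal R_0(z)v^*\|_{L^2\to L^{2,-\sigma}}$ are $\les|z|^{-1}$ for $\sigma>\tfrac12$, since $v\in L^2$.

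The bare term $e^{iz|x-y|}\mathcal R_1(z)(x,y)$ coming from the first summand of \eqref{eqn:symm res id} has a kernel bounded uniformly in $z$, hence is uniformly bounded $L^{2,\sigma}\to L^{2,-\sigma}$ for any $\sigma>\tfrac12$ (its weighted kernel is Hilbert--Schmidt on $\R^2$). In the term $-\mathcal R_0(z)v^*M^{-1}(z)v\mathcal R_0(z)$ one treats the five pieces of $M^{-1}(z)$ separately. Whenever a factor $Q$ is adjacent to the singular part $\frac{im}{2z}(\beta+I)e^{iz|\cdot|}$ of an outer resolvent, the orthogonality $(\beta+I)v^*Q=0=Qv(\beta+I)$ of \eqref{eqn:Q} lets one replace $e^{iz|x-x_1|}$ by $e^{iz|x|}$ (resp. $e^{iz|y_1-y|}$ by $e^{iz|y|}$); since $e^{iz|x-x_1|}-e^{iz|x|}=izF(z,x,x_1)$ with $|F|\les\la x_1\ra$ (cf. \eqref{eqn:F defn}), this trades the $1/z$ and any growth in the free variable for a factor $\la x_1\ra$ absorbed by $v(x_1)$. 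Together with the explicit powers of $z$ multiplying $\Lambda_0,\Lambda_1,\Lambda_2$ and the bounds above, each of $z^2\Lambda_0$, $z\Lambda_1Q$, $zQ\Lambda_2$, $Q\Lambda_3Q$ becomes a composition of operators whose norms multiply to $O(1)$ uniformly in $z$, already for $\sigma>\tfrac12$.

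The one genuinely delicate piece is $-\mathcal R_0(z)v^*(c_P zP)v\mathcal R_0(z)$, which carries a net $1/z$ singularity that must cancel against the bare free resolvent (no $Q$ orthogonality is available next to $P$). Keeping the singular parts of both outer resolvents and using the algebraic identity $(\beta+I)v^*(x_1)P(x_1,y_1)v(y_1)(\beta+I)=\|(a+b,c+d)^T\|_2^{-2}\,w(x_1)w(y_1)(\beta+I)$, with $w=|(a+b,c+d)^T|^2$ and $\int w=\|(a+b,c+d)^T\|_2^2$, together with $c_P=\tfrac{-2i}{m\|(a+b,c+d)^T\|_2^{2}}$, one finds this contribution equals $-\tfrac{im}{2z}(\beta+I)\phi_z(x)\phi_z(y)$ up to cross terms and a fully regular term that are $O(1)$ in $z$ and bounded $L^{2,\sigma}\to L^{2,-\sigma}$ for $\sigma>\tfrac12$ (because $w$ decays like $\la\cdot\ra^{-3-}$ and $e^{iz|x-x_1|}$ integrated against a rapidly decaying function is bounded in $x$), where $\phi_z(x):=\|(a+b,c+d)^T\|_2^{-2}\int e^{iz|x-x_1|}w(x_1)\,dx_1$. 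Replacing $\phi_z(x)$ by $e^{iz|x|}$ (and $\phi_z(y)$ by $e^{iz|y|}$) costs only a $\sigma>\tfrac12$ bounded operator by the same phase-subtraction estimate, since $|\phi_z(x)-e^{iz|x|}|\les|z|$. Hence, up to operators uniformly bounded $L^{2,\sigma}\to L^{2,-\sigma}$ for $\sigma>\tfrac12$,
$$
\mathcal R_0(z)-\mathcal R_0(z)v^*(c_P zP)v\mathcal R_0(z)=\tfrac{im}{2z}(\beta+I)\big(e^{iz|x-y|}-e^{iz(|x|+|y|)}\big)+(\text{bounded}),
$$
and $\big|\tfrac1z\big(e^{iz|x-y|}-e^{iz(|x|+|y|)}\big)\big|\le\big||x-y|-|x|-|y|\big|\les\min(|x|,|y|)\les\la x\ra^{1/2}\la y\ra^{1/2}$ shows that the weighted kernel is $\les\la x\ra^{1/2-\sigma}\la y\ra^{1/2-\sigma}\in L^2(\R^2)$ precisely when $\sigma>1$; this is where the hypothesis $\sigma>1$ enters.

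I expect this last cancellation to be the main obstacle: one must pin down the leading singular part of $-\mathcal R_0(z)v^*(c_P zP)v\mathcal R_0(z)$ precisely enough --- matching both the constant and the two phases --- to see that its sum with $\mathcal R_0(z)$ leaves only a kernel growing like $\min(|x|,|y|)$, rather than like $|x|+|y|\sim|x-y|$ (the latter being the naive bound that gives only the ``easy'' $\sigma>\tfrac32$ statement). Everything else should reduce to routine Hilbert--Schmidt and operator-composition estimates built on the decay of $v$, the powers of $z$ in $M^{-1}(z)$, and the orthogonality relations \eqref{eqn:Q}.
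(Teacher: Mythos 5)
Your proposal is correct and follows essentially the same route as the paper's proof: the symmetric resolvent identity with the first expansion of Proposition~\ref{Minversenew}, the orthogonality \eqref{eqn:Q} together with the phase subtraction \eqref{eqn:F defn} to remove the $1/z$ factors from all $Q$-terms, and the cancellation of the $\frac{im}{2z}(\beta+I)$ singularity between the free resolvent and the $c_PzP$ contribution, leaving a kernel of size $\min(\la x\ra,\la y\ra)$ which is bounded from $L^{2,\sigma}$ to $L^{2,-\sigma}$ precisely for $\sigma>1$. The only differences are cosmetic: you route the cancellation through the auxiliary function $\phi_z$ and close with a Hilbert--Schmidt bound via $\min(\la x\ra,\la y\ra)\le\la x\ra^{1/2}\la y\ra^{1/2}$, whereas the paper expands $e^{iz|x-x_1|}=e^{iz|x|}+O(|z||x_1|)$ directly and finishes with Schur's test.
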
 
\begin{proof}
Using \eqref{resolventex} we have
$$
      \mathcal{R}_0 (z)(x,y)=   
      \frac{im}{2z}   (\beta+I)  e^{iz|x-y|} +O(1). 
$$  
Using this and the first expansion in Proposition~\ref{Minversenew} in the symmetric resolvent identity \eqref{eqn:symm res id}, we obtain
\begin{multline*}
\mathcal R_V(z)(x,y)=\frac{im}{2z}   (\beta+I)  e^{iz|x-y|} +O(1)\\
-\int_{\R^2}\big[\frac{im}{2z}   (\beta+I)  e^{iz|x-x_1|} +O(1)\big] \bigg[v^*\big[c_P z P + z^2\Lambda_0(z)+z \Lambda_1(z)Q+z Q\Lambda_2(z)+Q\Lambda_3(z)Q\big]v\bigg](x_1,y_1)\\
\big[\frac{im}{2z}   (\beta+I)  e^{iz|y-y_1|} +O(1)\big] dx_1 dy_1,
\end{multline*}
provided that 
  $m$ is a regular point of the spectrum and $|v(x)|\les \la x\ra^{-\tfrac12-}$.  Here each $\Lambda_j$ is unifomly bounded in $L^2$ for $0<|z|<z_0$. 

Note that  we need to prove that the operator with kernel $\mathcal R_V(z)(x,y) \la x\ra^{-1-}\la y\ra^{-1-}$ is uniformly bounded in $L^2$ for $0<|z|<z_0$. This follows by using the orthogonality as in \eqref{eqn:Q} and \eqref{eqn:F defn} to eliminate the factors of $\frac{1}{z}$ for all terms  except  for the leading singular terms.    It remains only to control the following terms:
\begin{multline*}
 \frac{im}{2z}   (\beta+I)  e^{iz|x-y|} 
+\frac{m^2c_P}{4z }\int_{\R^2}     e^{iz|x-x_1|}   \big[(\beta+I) v^*   P  v(\beta+I) \big](x_1,y_1)   e^{iz|x-x_1|} dx_1 dy_1\\
= \frac{im}{2z}   (\beta+I)  \big[e^{iz|x-y|} - e^{iz(|x|+|y|)}\big]+O(1) \\
=O\big(||x-y|-|x|-|y||+1\big) =
O\big(\min(\la x\ra,\la y\ra\big).
\end{multline*}
Where we used $e^{iz|x-x_1|}=e^{iz|x|}+O(|z||x_1|)$, similarly $e^{iz|y-y_1|}=e^{iz|y|}+O(|z||y_1|)$,  $c_P=\frac{-2i    }{m \| (a+b,c+d)^T\|_2^{ 2} }$ and \eqref{eqn:S P int}.  Controlling the error term here, and utilizing \eqref{eqn:F defn} necessitate the assumption that $|V(x)|\les \la x\ra^{-3-}$.
Now note that, by Schur's test, the operator with kernel 
$$
\frac{\min(\la x\ra,\la y\ra)}{\la x\ra^{\sigma}  \la y\ra^{\sigma}} , \qquad\sigma>1
$$
is bounded in $L^2(\R)$, thus establishing the claim. 
\end{proof}

We now consider energies away from the threshold. 
First note that the free resolvent
\begin{multline}
\label{resolventex1}
\mathcal{R}_0 (z)(x,y)= \big[  i \alpha \partial_x   + m \beta + \sqrt{m^2+ z^2} I \big] \frac{ie^{iz|x-y|}}{2z} = \\ 
\frac{i}2\big[ -  \alpha \,\textrm{sgn}(x-y) + \frac{m\beta+\sqrt{z^2+m^2}I}{z}  \big] e^{iz|x-y|}=: f_0(z,x,y)e^{iz|x-y|},
\end{multline}
trivially  satisfies for all $|z|>z_0>0$ and $\sigma>\frac12$ 
\be\label{freeLAP}
\|\mathcal{R}_0 (z)\|_{L^{2,\sigma} \to L^{2,-\sigma}}\leq C_{\sigma,z_0}.
\ee
We also have  
\be \label{eq:res_iden}
\mR_V  (z)= \mR_0 (z )\big[I + V\mR_0 (z)\big]^{-1}. 
\ee
We can write for fixed  $d>0$ 
\begin{align}\label{eqn:Rd1}
\mR_0= \mR_d^1+\mR_d^2+\mR_d^3,
\end{align}
where 
\begin{align*}
	\mR_d^1(z)(x,y)&= \frac{i}2\big[ -  \alpha  + \frac{m\beta+\sqrt{z^2+m^2}I}{z}  \big]  e^{ i  z(x-y)}\chi_{(d,\infty)}(x-y) =f_1(z)e^{ i  z(x-y)}\chi_{(d,\infty)}(x-y),\\
	\mR_d^2(z )(x,y)&=\frac{i}2\big[ +  \alpha  + \frac{m\beta+\sqrt{z^2+m^2}I}{z}  \big]   e^{ i  z(y-x)}\chi_{(d,\infty)}(y-x)  =f_2(z)e^{ i  z(y-x)}\chi_{(d,\infty)}(y-x) , \text{ and} \\
	\mR_d^3(z)(x,y)&=\frac{i}2\big[ -  \alpha \,\textrm{sgn}(x-y) + \frac{m\beta+\sqrt{z^2+m^2}I}{z}  \big] e^{ i  z|x-y|}\chi_{(-2d,2d)}(x-y)=: f_3(z,x-y) e^{ i  z|x-y|}, 
\end{align*}
where $\chi_I$ denotes a smooth cutoff supported in the interval $I$ so that $\chi_{(d,\infty)}(x)+\chi_{(d,\infty)}(-x)+\chi_{(-2d,2d)}(x)\equiv 1$. 

Note that $\mR_d^1$ and $\mR_d^2$ satisfy \eqref{freeLAP}, and by Schur's test, we have  
$$
\|\mR_d^3\|_{L^2 \to L^2 } \les d.
$$ 
Using these bounds we have 
$$\|V\mR_d^3\|_{L^{2,\sigma} \to L^{2,-\sigma}}\leq  d C_V, \text{ and } 
$$
$$\|V\mR_d^1\|_{L^{2,\sigma} \to L^{2,-\sigma}}, \|V\mR_d^2\|_{L^{2,\sigma} \to L^{2,-\sigma}}  \leq C_{\sigma,V},\,\,\text{ for }\sigma>\frac12,\, |z|>z_0>0
$$
provided that $|V(x)|\les \la x\ra^{-2\sigma}$. Also note that all of the bounds above hold when we replace the kernel with its absolute value. 

These and a variant of the argument in the next section controlling the infinite series expansion \eqref{eqn:hi series} establish the limiting absorption principle for large energies  as in \cite{EGG} (also see \cite{EGS1,EGS2}).  Combining this with the results of Georgescu and Mantoiu for compact subsets of the spectrum, \cite{GM}, we have the following uniform limiting absorption principle:
\begin{lemma}\label{lem:LAP} Assume that $V$ has continuous entries. Then
	for any $|z|>z_0>0$, and $k=0,1,2,\ldots, $ we have
	$$
	\|\partial_z^k \mR_V^\pm(z)\|_{L^{2,\sigma} \to L^{2,-\sigma}} \leq C_{\sigma,k,V},
	$$ 
	provided that $\sigma>\frac12+k$, and  $|V(x)| \les \la x\ra^{-2\sigma-}$.
\end{lemma}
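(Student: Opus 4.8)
The plan is to reduce everything to the second resolvent identity \eqref{eq:res_iden}, $\mR_V(z)=\mR_0(z)[I+V\mR_0(z)]^{-1}$, together with the uniform free bound \eqref{freeLAP}. Differentiating \eqref{eq:res_iden} by Leibniz's rule, $\partial_z^k\mR_V$ becomes a finite sum of products of factors $\partial_z^{j}\mR_0$, $\partial_z^{j}(V\mR_0)$, and $[I+V\mR_0]^{-1}$; each $z$-derivative falling on $\mR_0(z)(x,y)=f_0(z,x,y)e^{iz|x-y|}$ produces at worst one factor $\la x-y\ra\le\la x\ra\la y\ra$ (plus harmless lower-order terms from $f_0$), so the weighted bounds $\|\la x\ra^{-\sigma_1}\partial_z^{j}\mR_0(z)\la x\ra^{-\sigma_2}\|_{2\to2}\les1$ hold for $|z|>z_0$ whenever $\sigma_1,\sigma_2>\tfrac12+j$, and the hypothesis $|V(x)|\les\la x\ra^{-2\sigma-}$ is exactly what lets one split the surplus weights across each occurrence of $V$. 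Thus the whole statement follows once one shows that $[I+V\mR_0(z)]^{-1}$ is bounded on $L^{2,\sigma}$ (equivalently on $L^{2,-\sigma}$) uniformly for $|z|>z_1$, with $z_1$ large, and then handles the compact window $z_0\le|z|\le z_1$.

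The core step, and the main obstacle, is this uniform invertibility of $I+V\mR_0(z)$ for large $|z|$, where the naive Neumann series diverges because $\|V\mR_0(z)\|_{L^{2,\sigma}\to L^{2,-\sigma}}$ does not decay (the amplitude $\tfrac{m\beta+\sqrt{z^2+m^2}I}{z}$ tends to $I$, not to $0$). I would use the splitting \eqref{eqn:Rd1}, $\mR_0=\mR_d^1+\mR_d^2+\mR_d^3$, choosing $d$ small so that $\|V\mR_d^3\|\le dC_V<\tfrac12$, so that $I+V\mR_d^3$ is invertible by Neumann series with a $z$-independent bound. The pieces $V\mR_d^1$ and $V\mR_d^2$ carry no decay, but their kernels are supported on $\{x-y>d\}$ and $\{y-x>d\}$, so iterated products are of Volterra type: $(V\mR_d^1)^n$ has kernel supported on the ordered set $x_0>x_1+d>\cdots>x_n+nd$, along which the phases telescope to a single $e^{iz(x_0-x_n)}$ (so the operator norm of the product does not grow as $|z|\to\infty$) while the ordered integral of $|V(x_0)|\cdots|V(x_{n-1})|$ contributes an $(n-1)!^{-1}$; hence $\sum_{n\ge1}(-1)^n(V\mR_d^1)^n$, and its $\mR_d^2$ analogue, converge in norm to operators bounded uniformly in $|z|>z_0$, under the decay hypothesis on $V$. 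The full Born series $\sum_n(-1)^n(V\mR_0)^n$ is then resummed by organizing words in $\{1,2,3\}$ according to the positions of the symbol $3$: maximal runs of consecutive $1$'s or $2$'s collapse into the uniformly bounded Volterra operators above, and what remains is a geometric series in a ``dressed'' small operator built from $V\mR_d^3$ (norm $\les d$), which converges. This is precisely the high-energy resummation carried out in Section~\ref{sec:high} (following \cite{EGG}, see also \cite{EGS1,EGS2}); it needs only continuous entries and $|V(x)|\les\la x\ra^{-1-}$ for the unweighted statement, and since differentiating the telescoped phase again costs only $\la x_0-x_n\ra$, absorbed by the weights once $\sigma>\tfrac12+k$, the same argument controls $\partial_z^k[I+V\mR_0(z)]^{-1}$.

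Finally I would close the compact window $z_0\le|z|\le z_1$: on any compact subset of $(-\infty,-m)\cup(m,\infty)$ the map $z\mapsto(D_m+V-(z+i0))^{-1}\in\mathcal B(L^{2,\sigma},L^{2,-\sigma})$ is continuous, hence bounded, by the Georgescu--Mantoiu limiting absorption principle \cite{GM} (there being no embedded eigenvalues by \cite{BC1}), and its $z$-derivatives are bounded there as well, since $z\mapsto\mR_V^\pm(z)$ is analytic off the thresholds and off the spectrum (differentiate along a small Cauchy contour, or differentiate the Fredholm inverse directly). Combining the large-$|z|$ bound with this yields $\sup_{|z|>z_0}\|\partial_z^k\mR_V^\pm(z)\|_{L^{2,\sigma}\to L^{2,-\sigma}}\les1$ for every fixed $z_0>0$, as claimed. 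Of the three steps only the middle one is genuinely delicate, the point being to make the Volterra-versus-smallness bookkeeping uniform in $z$ and simultaneously valid for all derivatives up to order $k$.
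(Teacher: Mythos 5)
There is a genuine gap in your core step, the uniform invertibility of $I+V\mR_0(z)$ for large $|z|$: the resummation of the Born series \eqref{eqn:hi series} ``by the positions of the symbol $3$'' does not converge unless $V$ is small. The pieces you do control are the directed ones: a pure run of $1$'s (or of $2$'s) is Volterra-type and its resummation is a \emph{bounded}, not small, operator, while $V\mR_d^3$ has norm $\les d$. But a word may pass directly from a run of $1$'s to a run of $2$'s with no intervening $3$, and such transitions carry no smallness at all: after collapsing maximal runs you are left with products of uniformly bounded operators with arbitrarily many factors. Concretely, for the alternating words $(12)^n$ the support conditions only impose the zigzag $x_1<\min(x_0,x_2)-d$, $x_3<\min(x_2,x_4)-d,\dots$, whose absolute-value integral is of size $C_V^n$ with no factorial gain (the ``peak'' variables $x_2,x_4,\dots$ are unconstrained relative to one another), and no choice of $d$ helps since no factor of $\mR_d^3$ appears; the fact that $f_1,f_2$ tend to complementary projections does not save you either, because $V$ sits between them. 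These are exactly the ``undirected'' products, and the paper does \emph{not} treat them by absolute values: in \eqref{undirectibp} one changes variables to $u_\ell=\tfrac{x_\ell-x_{\ell+1}}2$, $u_0=\tfrac{x_1+x_{n+1}}2$, exhibits the non-stationary factor $e^{-2izu_0}$ in each undirected block, and gains smallness in $z$ from this oscillation (by integration by parts in $u_0$ in Section~\ref{sec:high}, or in the Riemann--Lebesgue fashion of \cite{EGG,EGS1,EGS2} when $V$ is merely continuous); combined with Lemma~\ref{lem:directedprod} for directed blocks, taking the block length $M$ large, $\delta$ small, and then $|z|$ large makes the series summable. The only oscillation your argument uses is the telescoping of phases inside a single run, which prevents growth but yields no decay in $z$; the paper's remark after Lemma~\ref{lem:directedprod}, that the oscillation in $\mR_0$ remains crucial for the perturbed limiting absorption principle, is precisely about the step you are missing.

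Two smaller points. On the compact window $z_0\le|z|\le z_1$ you cannot ``differentiate along a small Cauchy contour'': the boundary values $\mR_V^\pm(z)$ sit on the edge of the region of analyticity, not in its interior. The standard fix is to differentiate the resolvent identity, e.g. $\partial_z\mR_V^\pm=(I-\mR_V^\pm V)\,\partial_z\mR_0^\pm\,(I-V\mR_V^\pm)$ and its higher-order analogues, and to combine the $k=0$ bound on compact subsets from \cite{GM} with the weighted bounds for $\partial_z^j\mR_0$; this is where the hypotheses $\sigma>\tfrac12+k$ and $|V(x)|\les\la x\ra^{-2\sigma-}$ enter, and it matches the paper's route (large $|z|$ by the series argument, compact energy windows by \cite{GM}). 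Your bookkeeping for the free resolvent itself, namely that $\partial_z^j\mR_0(z)(x,y)$ costs $\la x-y\ra^{j}$ and is bounded $L^{2,\sigma}\to L^{2,-\sigma}$ for $\sigma>\tfrac12+j$ when $|z|>z_0$, and that the decay of $V$ lets you distribute the intermediate weights, is fine.
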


For completeness, we discuss the consequences of the uniform limiting absorption principle in Theorem~\ref{cor:uniform LAP}.  An immediate consequence, see
e.g.~\cite[Theorems XIII.25 and XIII.30]{RS1}, is the Kato smoothing bound:
$$
\|\la x\ra^{-\sigma}e^{-itH}f\|_{L^2_tL^2_x}\leq C_{\sigma,H} \|f\|_{L^2}, 
$$
provided that the thresholds are regular, $\sigma>1$,  $|V(x)|\les \la x\ra^{-3-}$, and $V$ has continuous entries.
From here, following the argument of Rodnianski and Schlag in \cite{RS}, one obtains the Strichartz estimates in Corollary~\ref{cor:Strichartz}, also see \cite{EGS1,EGS2} and Section~2 of \cite{EGG}.  The eigenvalue free region follows from a standard perturbation argument.
We refer the reader to Section~6 of \cite{EGG} for a more complete discussion.

\section{High energy dispersive estimates}\label{sec:high}

To complete the proof of Theorems~\ref{thm:main reg}, we consider the contribution of the Stone's formula \eqref{eqn:stone} when the spectral variable is bounded away from the threshold energies. We begin by considering the case when $|z|$ is sufficiently large. We adapt the argument of \cite{EGG}, which was designed to establish  large energy limiting absorption principles, to obtain $L^1\to L^\infty $ bounds with almost optimal derivative requirement on the initial data. We note that this method has not been used before to obtain global decay estimates. The reason that it works in the case of one dimensional Dirac equation is that,  unlike in dimensions $d\geq 2$, the magnitude of the free resolvent on $\R$ also satisfies the limiting absorption principle. However, the oscillation in $\mR_0$ is still crucial to establish the limiting absorption principle for the perturbed resolvent and  for dispersive estimates.

\begin{prop}\label{prop:hi energies}
	Assume $|V(x)|\les \la x\ra^{-\delta}$ and $|\partial_x V(x)|\les \la x\ra^{-1-}$.  If $\chi_j(z)$ is a smooth, even cut-off to frequencies $|z|\approx 2^j$ and $\delta>3$,
	\begin{align*}
	\bigg|\int_{\R} e^{-it\sqrt{z^2+m^2}} \frac{z\chi_j(z)}{\sqrt{z^2+m^2}} [\mR_V(z)](x,y)\, dz\bigg| \les \min(2^j, |t|^{-\f12}2^{\frac{3j}{2}} ).
	\end{align*}
	Furthermore, if $\delta>5$, and $|\partial_x V(x)|\les \la x\ra^{-2-}$
	\begin{align*}
	\bigg|\int_{\R} e^{-it\sqrt{z^2+m^2}} \frac{z\chi_j(z)}{\sqrt{z^2+m^2}} [\mR_V(z)](x,y)\, dz\bigg| \les \min(2^j, |t|^{-\f12}2^{\frac{3j}{2}},|t|^{-\frac32} 2^{\frac{3j}{2}} \max(\la x\ra, \la y\ra )   ).
	\end{align*}
\end{prop}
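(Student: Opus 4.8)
The plan is to separate the free and perturbed contributions via the symmetric resolvent identity \eqref{eqn:symm res id}, writing $\mR_V(z)=\mR_0(z)-\mR_0(z)v^*[M(z)]^{-1}v\mR_0(z)$ with $M(z)=U+v\mR_0(z)v^*$ as in \eqref{eqn:M defn}. The contribution of the lone $\mR_0(z)$ is already controlled by Theorem~\ref{thm:giggles}, which yields $\min(2^j,|t|^{-\f12}2^{\frac{j}{2}},|t|^{-\f32}\la x-y\ra 2^{\frac{j}{2}})$, and since $\la x-y\ra\les\max(\la x\ra,\la y\ra)$ this is stronger than claimed. So the work is in the perturbation term. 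Using \eqref{resolventex} I would peel the oscillation off the free resolvent, writing $\mR_0(z)(x,y)=e^{iz|x-y|}g(z,x-y)$, where for $|z|\approx 2^j$ with $j$ large the amplitude $g$ satisfies $|g|\les 1$, $|\partial_z g|\les 2^{-2j}$, $|\partial_{zz}g|\les 2^{-3j}$ (the gain because, once $e^{iz|x-y|}$ is removed, the remaining symbol $-\f{i}{2}\alpha\,\sgn(x-y)+\f{i}{2z}(m\beta+\sqrt{z^2+m^2}I)$ has $z$-derivatives decaying in $z$). The perturbation term then becomes an integral of the form $\int e^{-it\sqrt{z^2+m^2}+iz(|x-x_1|+|y_1-y|)}\psi_j(z,x,y,x_1,y_1)\,dz\,dx_1\,dy_1$, where $\psi_j=\f{z\chi_j(z)}{\sqrt{z^2+m^2}}\,g(z,x-x_1)\,v^*(x_1)[M(z)]^{-1}(x_1,y_1)\,v(y_1)\,g(z,y_1-y)$, to which Lemma~\ref{lem:vdc} applies in $z$ with $r=|x-x_1|+|y_1-y|$, after which one integrates in $x_1,y_1$ against $v$.

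For this to close, the key input is a high-energy bound on $[M(z)]^{-1}$: there is a fixed $j_0$ so that for $|z|\approx 2^j$, $j\ge j_0$, the operator $M(z)$ is invertible on $L^2$ and $[M(z)]^{-1}$ is absolutely bounded with $\|\,|\partial_z^k[M(z)]^{-1}|\,\|_{L^2\to L^2}\les 2^{-jk}$ for $k=0,1,2$, uniformly in $j\ge j_0$, under the stated decay (for the unweighted bound $\delta>3$, $|\partial_x V|\les\la x\ra^{-1-}$, and $k=0,1$, suffice). This is exactly where the method of \cite{EGG} (and \cite{EGS1,EGS2}) enters, and it is the main obstacle: $v\mR_0(z)v^*$ does \emph{not} tend to zero in operator norm as $|z|\to\infty$ — its magnitude satisfies the limiting absorption principle but is genuinely of unit size — so $M(z)$ cannot be inverted by a naive Neumann series. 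Instead I would use the directional decomposition $\mR_0=\mR_d^1+\mR_d^2+\mR_d^3$ from \eqref{eqn:Rd1}: the near-diagonal piece has $\|v\mR_d^3v^*\|_{L^2\to L^2}\les d$ (also with kernel replaced by its absolute value), so $U+v\mR_d^3v^*$ is invertible by a convergent Neumann series once $d$ is fixed small; and the off-diagonal pieces $\mR_d^1,\mR_d^2$, after factoring out the gauges $e^{\pm izx}$, become operators with triangular (Volterra-type) kernels $v(x)f_{1,2}(z)\chi_{(d,\infty)}(\pm(x-y))v(y)$, so the associated Neumann series converges irrespective of operator size — this is the one-dimensional analogue of the argument in \cite{EGG}. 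The $z$-derivative bounds are obtained by differentiating this construction: a derivative either hits $f_i(z)$ or $f_3(z)$ (gaining a power $2^{-2j}$), or hits a gauge $e^{\pm izx}$, producing a spatial weight at an endpoint that is then absorbed by the decay of $v$; combined with one spatial integration by parts (which trades a factor $z^{-1}$ for a derivative on $V$, hence the hypothesis on $\partial_x V$), this gives the $2^{-jk}$ gain. Carrying two $z$-derivatives through the iteration, which involves $\partial_{zz}(v\mR_0v^*)$ with kernel of size $\la x-y\ra^2|v(x)||v(y)|$, together with the weights lost in the Volterra step, is precisely what forces $\delta>5$ for the weighted bound. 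For the finitely many remaining frequencies $j<j_0$, i.e. $|z|$ in a compact subset of $(m,\infty)$ bounded away from the threshold, the needed absolute boundedness and derivative bounds $\|\partial_z^k\mR_V(z)\|_{L^{2,\sigma}\to L^{2,-\sigma}}\les 1$ follow from Lemma~\ref{lem:LAP} together with \cite{GM}.

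With these bounds in hand I would substitute $\mR_0v^*[M(z)]^{-1}v\mR_0$ into the Stone's-formula integral and apply Lemma~\ref{lem:vdc} in $z$, then integrate in $x_1,y_1$. For the bound $\les 2^j$ use the first clause, with $\|\psi_j\|_{L^1_z}\les 2^j\,\|v\|_{L^2}^2\,\|\,|[M(z)]^{-1}|\,\|_{2\to2}$. For $|t|^{-\f12}2^{\frac{j}{2}}$ use the second clause: $\|\partial_z\psi_j\|_{L^1_z}\les 2^j(2^{-j}+2^{-2j})\|v\|_2^2\les 1$, the factor $2^{-j}$ coming from the derivative bound on $[M(z)]^{-1}$. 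For the weighted bound use the third clause with $r=|x-x_1|+|y_1-y|$; since, unlike at low energy, no $Q$-orthogonality is available, one simply estimates $r\les\max(\la x\ra,\la y\ra)\la x_1\ra\la y_1\ra$, so that when $\partial_z$ lands on $[M(z)]^{-1}$ one obtains $|t|^{-\f32}2^{\frac{j}{2}}\cdot 2^{-j}\cdot 2^{j}\cdot\max(\la x\ra,\la y\ra)\,\|\la\cdot\ra v\|_{L^2}^2\les |t|^{-\f32}2^{\frac{j}{2}}\max(\la x\ra,\la y\ra)$, and the remaining terms — where a derivative instead hits $g$ or the symbol $\f{z\chi_j(z)}{\sqrt{z^2+m^2}}$ — are strictly better, of size $O(|t|^{-\f32}2^{-\frac{j}{2}})$. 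The only genuinely delicate point is the high-energy inversion of $M(z)$ described above; the rest is bookkeeping with Lemma~\ref{lem:vdc}.
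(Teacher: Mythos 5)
There is a genuine gap at the central input of your high-energy argument: the claimed bounds $\|\,|\partial_z^k[M(z)]^{-1}|\,\|_{L^2\to L^2}\les 2^{-jk}$, $k=1,2$, are not available. Since $M(z)=U+v\mR_0(z)v^*$, the resolvent identity gives $[M(z)]^{-1}=U-Uv\mR_V(z)v^*U$, so $\partial_z[M(z)]^{-1}=-Uv(\partial_z\mR_V(z))v^*U$, and your claim would amount to gaining a full power of $z$ over the limiting absorption principle of Lemma~\ref{lem:LAP}. This fails already at the level of the free part: by \eqref{resolventex1}, $\partial_z\mR_0(z)(x,y)=i|x-y|\mR_0(z)(x,y)+O(z^{-2})$, and the kernel $v(x)|x-y|f_0e^{iz|x-y|}v^*(y)$ has operator norm of order one uniformly in $z$ (splitting into the two triangular parts, the modulation $e^{\pm iz(x-y)}$ is a unitary conjugation and produces no decay); with absolute values on the kernel, as an ``absolutely bounded'' statement requires, the oscillation is destroyed outright, so no spatial integration by parts can rescue a norm bound of this type. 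Relatedly, the Volterra remark does not by itself invert $M(z)$: $v\mR_d^1v^*$ and $v\mR_d^2v^*$ are triangular in opposite directions, their sum is not triangular, and the mixed (``undirected'') products arising in any expansion of $[M(z)]^{-1}$ are exactly the terms that need the oscillation. With only $\partial_z[M(z)]^{-1}=O(1)$, your application of Lemma~\ref{lem:vdc} gives $\|\partial_z\psi_j\|_{L^1_z}\les 2^j$ and hence the bound $|t|^{-\f12}2^{\f{3j}{2}}$, which loses a full power of $2^j$, i.e.\ one derivative on the data, compared with what is claimed.

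This is precisely why the paper never compresses the high-energy information into a norm bound on $[M(z)]^{-1}$ (weighted $L^2$ bounds on $\partial_z^k\mR_V$ are used only at intermediate energies, in Lemma~\ref{lem:RV int} via Lemma~\ref{lem:LAP}). Instead, for $2^j$ large it expands $\mR_V$ in the full Born series \eqref{eqn:hi series}, keeps every phase $e^{iz|x_i-x_{i+1}|}$ inside the Van der Corput phase (taking $r=\sum_i|x_i-x_{i+1}|$ in Lemma~\ref{lem:vdc}), and gets summability from two separate mechanisms: each undirected $M$-block gains $2^{-j}$ from one integration by parts in the combined spatial variable $u_0$, performed inside the iterated spatial integral where adjacent phases combine (this is where the hypothesis on $\partial_xV$ enters), while each directed block is small, of size $\delta^M$, by the support/decay considerations of Lemma~\ref{lem:directedprod} after fixing $d=d(\delta)$; one then chooses $\delta$ small, $M$ large, and finally $2^j$ large. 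Your treatment of the free term and of the intermediate energies (two iterations of the resolvent identity plus Lemma~\ref{lem:LAP}, as in Lemmas~\ref{lem:hi single born} and \ref{lem:RV int}) is consistent with the paper, but the high-energy inversion step as you describe it does not close.
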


To establish the desired bounds, we utilize a different approach depending on the size of $|z|$.
To establish the desired bounds from Proposition~\ref{prop:hi energies} for sufficiently large $|z|$, we iterate the resolvent identity to form an infinite series.  Formally, we write 
\begin{align}\label{eqn:hi series}
	\mR_V(z)=\sum_{m=0}^{\infty} \mR_0(z)[-V\mR_0(z)]^m.
\end{align}
By the proof of the limiting absorption principle, the series converges in operator norm $:L^{2,\sigma}\to L^{2,-\sigma}$, $\sigma>\frac12$, for large enough $|z|$.  To study the contribution of the $m^{th}$ term of the series to the dispersive estimate, one cannot utilize decay in the spectral variable, as can be done for the Schr\"odinger operator \cite{GS}.  Instead we rely on a subtle cancellation that we exploit below for sufficiently large $|z|$.      When $|z|$ is bounded above, a simpler argument may be employed see Proposition~\ref{prop:int} and Lemma~\ref{lem:RV int} below.

For the dispersive bounds, we select a term from the series \eqref{eqn:hi series}, and show that contribution to the dispersive estimate is summable in $m$.  To that end,  consider for some $M$ to be determined and $0\leq r<M$ and write $m=kM+r$,
\be\label{highen_1}
\int_\R e^{-it\sqrt{z^2+m^2}} \frac{z}{\sqrt{z^2+m^2}} \chi_j(z) \big[\mR_0(z)   [V \mR_0(z)]^{kM} [V\mR_0(z)]^r  \big](x,y) dz.
\ee
Recalling \eqref{eqn:Rd1}, we
write $\mR_0=\mR_d^1+\mR_d^2+\mR_d^3$.  For each block of 
$$V\mR_d^{j_1}\cdots V \mR_d^{j_M},$$ 
where $j_i\in\{1,2,3\}$ we have two choices; after removing all instances of $j=3$, either all remaining $j$'s are the same, or there is a pair of $j$'s taking different values $1,2$. As in \cite{EGS1,EGS2,EGG}, we call the first case a directed product, and the latter case an undirected product. A product consisting of all $\mR_d^3$'s is a directed product.

We have the following lemma for directed products. We omit  the elementary proof which is similar to the ones given in  \cite{EGS2,EGG}.  
\begin{lemma} \label{lem:directedprod} Fix a potential $V$ satisfying $|V(x)|\les \la x\ra^{-1-}$.
	For any $\delta>0$, there exists a distance $d=d(\delta)> 0$ such that
	each directed product   satisfies the estimate
	\begin{equation}
	\big  |[V\mR_d^{j_1}\cdots V \mR_d^{j_M}](x,y) \big|\leq  |V(x)|\, C_{V,\delta}\, \delta^{M} 
	\end{equation}
	uniformly over all $z> 1$ and all choices of $M$. Moreover, the same claim holds if we replace the kernel of $\mR_d^j$'s with their absolute value.  
\end{lemma}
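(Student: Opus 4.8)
The plan is to throw away the oscillation and reduce the statement to a purely combinatorial estimate for an iterated spatial integral. First I would record that for $z>1$ the matrix-valued prefactors $f_1(z),f_2(z),f_3(z,\cdot)$ in \eqref{eqn:Rd1} are bounded by an absolute constant $C_m$: they involve only $\alpha,\beta$ and $\frac{m\beta+\sqrt{z^2+m^2}\,I}{z}$, and $1<\frac{\sqrt{z^2+m^2}}{z}\le\sqrt{1+m^2}$ on $z>1$, while the exponential factors have modulus one. Thus, uniformly in $z>1$,
\be
|\mR_d^j(u,v)|\le C_m\,\mathbf 1_{S_j}(u-v),\qquad S_1=(d,\infty),\quad S_2=(-\infty,-d),\quad S_3=(-2d,2d),
\ee
and the same bound holds after replacing each kernel by its absolute value. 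Writing out the kernel of the product with $x_0=x$ and $x_M=y$, the claim reduces (after rescaling $\delta$ to absorb the harmless $C_m^M$) to showing that for every directed configuration $(j_1,\dots,j_M)$,
\be\label{dirJ}
J:=\int_{\R^{M-1}}\prod_{i=1}^{M-1}|V(x_i)|\ \prod_{i=1}^{M}\mathbf 1_{S_{j_i}}(x_{i-1}-x_i)\,dx_1\cdots dx_{M-1}\ \le\ C_{V,\delta}\,\delta^{M}.
\ee
By the reflection $x\mapsto-x$ I may assume all the non-local factors are of type $1$ (the purely local case being included as the case of no type-$1$ factor).

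The bound \eqref{dirJ} rests on two mechanisms. A type-$3$ (local) factor forces $|x_{i-1}-x_i|<2d$, and since $|V(x)|\les\la x\ra^{-1-}$ gives $V\in L^1\cap L^\infty$, one has the uniform estimate $\sup_{a\in\R}\int_{|s-a|<2d}|V(s)|\,ds\le 4d\|V\|_{L^\infty}$. A type-$1$ (directed) factor forces $x_{i-1}>x_i$, so along a maximal run of consecutive type-$1$ factors the corresponding points are strictly decreasing; by the permutation symmetry of the integrand, $\int_{t_1>\cdots>t_r}\prod_{i=1}^{r}|V(t_i)|\,dt=\frac{\|V\|_{L^1}^{r}}{r!}$. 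To combine them I would integrate the variables $x_i$ with $j_i=3$ first, in order of decreasing $i$: when $x_i$ is reached the constraint $\mathbf 1_{|x_{i-1}-x_i|<2d}$ is still present (the only integrations performed so far are of variables with strictly larger index), while the other constraint containing $x_i$ is bounded by $1$; this contributes a factor $\le 4d\|V\|_{L^\infty}$ per such variable. One checks that this phase never disturbs the descending chains, because those are carved out by constraints linking two type-$1$ variables, which are never used as active constraints nor discarded here. What remains is a product, over the maximal type-$1$ runs, of simplex integrals, each $\le\|V\|_{L^1}^{r_j}/r_j!$ with $r_j$ the number of intermediate variables in the $j$-th run. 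With $p_0$ the number of eliminated type-$3$ variables one has $p_0+\sum_j r_j=M-1$ and $q:=\#\{\text{runs}\}\le p_0+1$, so
\be\label{dirJ2}
J\ \le\ (4d\|V\|_{L^\infty})^{p_0}\prod_{j=1}^{q}\frac{\|V\|_{L^1}^{r_j}}{r_j!}.
\ee

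To finish I would normalize by $\delta$. Dividing \eqref{dirJ2} by $\delta^{M-1}=\delta^{p_0+\sum_j r_j}$, estimating $\frac{\|V\|_{L^1}^{r_j}}{r_j!\,\delta^{r_j}}\le e^{\|V\|_{L^1}/\delta}$ for each $j$, and using $q\le p_0+1$ gives
\be
\delta^{-(M-1)}J\ \le\ e^{\|V\|_{L^1}/\delta}\Big(\tfrac{4d\|V\|_{L^\infty}}{\delta}\,e^{\|V\|_{L^1}/\delta}\Big)^{p_0},
\ee
so choosing $d=d(\delta,V)$ small enough that the bracket does not exceed $1$ yields $J\le \delta^{-1}e^{\|V\|_{L^1}/\delta}\,\delta^{M}=:C_{V,\delta}\,\delta^{M}$, which is \eqref{dirJ}; note that $d$ depends only on $\delta$ and $V$ (and $m$), as required, and that the entire estimate is uniform in $M$ and in the configuration. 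I expect the only genuinely delicate point to be the bookkeeping in the previous paragraph — selecting an integration order in which every active constraint is available when it is used, verifying that the descending runs survive the elimination of the local variables, and checking that the exponents of $d$ and of the factorials tally to $M-1$ regardless of $M$ and of $(j_1,\dots,j_M)$. None of this is conceptually hard, which is no doubt why the authors simply invoke the analogous arguments of \cite{EGS2,EGG}.
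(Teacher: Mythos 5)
Your proof is correct, and it follows exactly the route the paper intends: the authors omit the argument (citing \cite{EGS2,EGG}) but remark that the bound ``does not rely on the oscillation of the resolvent, and it follows because of the support condition on the kernels,'' which is precisely your mechanism of discarding the phases, extracting a factor $O(d\|V\|_{L^\infty})$ from each local ($j=3$) factor, and a factorial gain $\|V\|_{L^1}^{r}/r!$ from each monotone run of directed factors, with $d=d(\delta,V)$ chosen small to absorb the $e^{\|V\|_{L^1}/\delta}$ per run. The bookkeeping (eliminating local variables in decreasing index order, $q\le p_0+1$ runs, rescaling $\delta$ to absorb $C_m^M$) checks out, so the proposal is a complete and faithful filling-in of the omitted proof.
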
  

We note that this bound does not rely on the oscillation  of the resolvent, and it follows because of the support condition on the kernels.
We are now ready to prove Proposition~\ref{prop:hi energies} for $|z|$ sufficiently large.

\begin{lemma}\label{lem:hi energies}
	
	Assume $|V(x)|\les \la x\ra^{-\delta}$ and $|\partial_x V(x)|\les \la x\ra^{-1-}$.  If $\delta>1$, there exists a $J$ sufficiently large so that for all $j\geq J$, we have
	\begin{align*}
	\bigg|\int_{\R} e^{-it\sqrt{z^2+m^2}} \frac{z\chi_j(z)}{\sqrt{z^2+m^2}} [\mR_V(z)](x,y)\, dz\bigg| \les \min(2^j, |t|^{-\f12}2^{\frac{3j}{2}} ).
	\end{align*}
	Furthermore, if $\delta>2$, and $|\partial_x V(x)|\les \la x\ra^{-2-}$
	\begin{align*}
	\bigg|\int_{\R} e^{-it\sqrt{z^2+m^2}} \frac{z\chi_j(z)}{\sqrt{z^2+m^2}} [\mR_V(z)](x,y)\, dz\bigg| \les \min(2^j, |t|^{-\f12}2^{\frac{3j}{2}},|t|^{-\frac32} 2^{\frac{3j}{2}} \max(\la x\ra, \la y\ra )   ).
	\end{align*}
	
\end{lemma}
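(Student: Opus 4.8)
The plan is to combine the high-energy resolvent-series machinery of \cite{EGS1,EGS2,EGG} with one additional layer of oscillatory-integral estimation in the spectral variable, so that the output is an $L^1\to L^\infty$ bound rather than a weighted $L^2$ resolvent bound. First fix $J$ so large that the Neumann series \eqref{eqn:hi series} for $\mR_V(z)$ converges in $L^{2,\sigma}\to L^{2,-\sigma}$ (say $\sigma=1$) for all $|z|\ge 2^{J}$; this is part of the large-energy limiting absorption principle and uses only $|V(x)|\les\la x\ra^{-1-}$. Since $\chi_j$ is supported in $|z|\approx 2^j$, for $j\ge J$ it then suffices to bound the contribution of the $m$-th term, $\int_\R e^{-it\sqrt{z^2+m^2}}\f{z\chi_j(z)}{\sqrt{z^2+m^2}}\big[\mR_0(z)(V\mR_0(z))^m\big](x,y)\,dz$, and to show the resulting bounds are summable in $m$. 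As in \eqref{highen_1}, write $m=kM+r$ with $0\le r<M$; decompose each free resolvent as in \eqref{eqn:Rd1}, $\mR_0=\mR_d^1+\mR_d^2+\mR_d^3$, and group the $kM$ middle factors into $k$ blocks of length $M$. Each block, after deleting its $\mR_d^3$ factors, is either directed (all surviving indices equal) or undirected (two surviving indices take the values $1$ and $2$). The parameters $\delta$ (hence $d=d(\delta)$), $M$, and $J$ are fixed at the very end.

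For the directed blocks we invoke Lemma~\ref{lem:directedprod} (in particular its absolute-value form): each directed block of length $M$ is bounded pointwise by $C_{V,\delta}\delta^{M}|V(\cdot)|$, uniformly in $z$. To carry out the $z$-integration we freeze the intermediate spatial variables, pull out the accumulated real spatial phase $e^{iz\Phi}$ with $\Phi=\Phi(x,x_1,\dots,x_m,y)$ satisfying $|\Phi|\les\la x\ra+\la y\ra+\sum_i\la x_i\ra$, and are left with an amplitude $\psi_j(z)=\chi_j(z)\f{z}{\sqrt{z^2+m^2}}\prod_i f_{j_i}(z)$ built from the amplitude functions $f_1,f_2,f_3$ of \eqref{eqn:Rd1}. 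Since $f_i(z)=O(1)$, $|\partial_z f_i(z)|\les 2^{-2j}$, and $|\partial_z^2 f_i(z)|\les 2^{-3j}$ for $|z|\approx 2^j$, one checks that the three quantities entering Lemma~\ref{lem:vdc} — namely $\|\psi_j\|_{L^1_z}\les 2^j C^m$, $\|\partial_z\psi_j\|_{L^1_z}$, and the third quantity (which carries the factor $\la\Phi\ra$) — grow at most like $(1+m)^{C_0}C^m$ for fixed $C_0$ and an absolute constant $C$. Applying Lemma~\ref{lem:vdc} (with this $j$ and $r=\Phi$) bounds the $z$-integral by $\min(2^j,|t|^{-\f12}2^{j/2})(1+m)^{C_0}C^m$ for the unweighted estimate and by $|t|^{-\f32}2^{j/2}\la\Phi\ra(1+m)^{C_0}C^m$ for the weighted one; integrating the result over the intermediate variables against $\prod_i|V(x_i)|$ times the support indicators then costs $\|V\|_{L^1}^m$ (resp., via $|\Phi|\les\la x\ra+\la y\ra+\sum\la x_i\ra$, a factor $\les(\la x\ra+\la y\ra)\,(1+m)^{C_0}\|V\|_{L^{1,1}}^m$, which is where $\delta>2$ is needed).

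\textbf{The main point} is the undirected blocks, and this is where the oscillation of $\mR_0$ and the hypothesis on $\partial_x V$ enter. An undirected block contains an adjacent pair $\mR_d^1,\mR_d^2$ (in one order or the other) sharing a vertex $x_\ell$; at that vertex the two spatial exponents point in opposite directions, so the $x_\ell$-dependence of the accumulated phase is $e^{\mp 2izx_\ell}$ and hence $\partial_{x_\ell}\Phi=\mp 2z$ is nonvanishing of size $\approx 2^j$. Integrating by parts once in $x_\ell$ — legitimate because $V$ is $C^1$ with $|\partial_x V(x)|\les\la x\ra^{-1-}$ and the cutoffs $\chi_{(d,\infty)}$ are smooth — produces a gain of $2^{-j}$, with the derivative $\partial_{x_\ell}$ landing on $V(x_\ell)$ (giving $\partial_x V(x_\ell)$, still in $L^1$, and in $L^{1,1}$ precisely when $|\partial_x V(x)|\les\la x\ra^{-2-}$) or on a neighbouring cutoff. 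Thus each undirected block contributes an extra factor $2^{-j}$ beyond the directed-type estimates, while the surviving $z$-integral is handled by Lemma~\ref{lem:vdc} exactly as before. No higher regularity of $V$ is required: the time decay comes entirely from the $z$-integration, and the spatial integration by parts is used only to force convergence of the sum over $m$.

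It remains to assemble and sum. For a term with $a$ directed and $b=k-a$ undirected blocks, the above gives a bound $\les(1+m)^{C_0}C^m\,(C_{V,\delta}\delta^M)^a\,(2^{-j})^b\times\min(2^j,|t|^{-\f12}2^{j/2})$ (resp. the weighted version with the extra $\la x\ra+\la y\ra$ and $|t|^{-\f32}2^{j/2}$). Summing over the $\binom{k}{a}$ placements of directed versus undirected blocks turns this into a geometric factor $\big(C_{V,\delta}\delta^M+2^{-j}\big)^k$, up to $(1+m)^{C_0}C^m$; summing over $k$ and over $0\le r<M$ then converges once we choose $\delta$ small (fixing $d=d(\delta)$), then $M$ large, and finally $J$ large, so that both $C_{V,\delta}\delta^M(C\|V\|_{L^1})^M<\f14$ and $2^{-J}(C\|V\|_{L^1})^M<\f14$ (with $\|V\|_{L^{1,1}}$ in place of $\|V\|_{L^1}$ for the weighted bound). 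This yields the two claimed estimates for all $j\ge J$. \textbf{The delicate part} of the argument is precisely this bookkeeping: one must verify that every ``error'' factor — the $z$-derivatives of $\prod_i f_{j_i}(z)$, the spatial-phase weight $\la\Phi\ra$, and the integration-by-parts remainders — grows at most like $(1+m)^{C_0}C^m$, so that it is defeated by the genuinely small gains $\delta^M$ and $2^{-j}$; this is the sense in which the Kato-smoothing high-energy method of \cite{EGS1,EGS2,EGG} can be pushed through to produce $L^1\to L^\infty$ dispersive bounds.
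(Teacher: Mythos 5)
Your overall architecture coincides with the paper's: expand $\mR_V$ in the Born series \eqref{eqn:hi series}, write $m=kM+r$, split $\mR_0=\mR_d^1+\mR_d^2+\mR_d^3$ as in \eqref{eqn:Rd1}, control directed blocks by Lemma~\ref{lem:directedprod}, gain a factor $2^{-j}$ from each undirected block by one spatial integration by parts, run the remaining $z$-integral through Lemma~\ref{lem:vdc}, and close the sum over $m$ by choosing $\delta$ small, then $M$ large, then $J$ large. The gap is in the one step that carries the whole argument, the undirected blocks. You assume such a block ``contains an adjacent pair $\mR_d^1,\mR_d^2$ sharing a vertex $x_\ell$,'' so that the accumulated phase depends on $x_\ell$ only through $e^{\mp2izx_\ell}$ and a single integration by parts in $x_\ell$ gains $2^{-j}$. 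Adjacency is not available: undirectedness only says that after deleting the $\mR_d^3$ factors both indices $1$ and $2$ occur, so the generic configuration is $\mR_d^1V(\mR_d^3V)^n\mR_d^2$ (or its mirror) with $n\geq1$, and then no single vertex works. For instance, in $\mR_d^1(x_0,x_1)V(x_1)\mR_d^3(x_1,x_2)V(x_2)\mR_d^2(x_2,x_3)$ the $x_1$-derivative of the phase is $iz(-1+\sgn(x_1-x_2))$, which vanishes identically on the region $x_1>x_2$ (and symmetrically for $x_2$), and the phase is only Lipschitz in these variables because of the $|x_\ell-x_{\ell+1}|$ terms, so the naive by-parts step neither gains $2^{-j}$ nor is even well defined pointwise.

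This is precisely what the paper's proof fixes with the change of variables in \eqref{undirectibp}: set $u_\ell=(x_\ell-x_{\ell+1})/2$ for $\ell=1,\dots,n$ and $u_0=(x_1+x_{n+1})/2$, i.e.\ differentiate with respect to a rigid translation of the entire $\mR_d^3$-chain rather than a single vertex. In the variable $u_0$ the terms $|u_\ell|$ do not move, the two outer linear phases combine to exactly $e^{-2izu_0}$, and integration by parts in $u_0$ produces the clean factor $(2z)^{-1}\approx2^{-j}$; since $\partial_{u_0}x_\ell=\pm1$, the derivative lands on one of the potentials or cutoffs, which is where $|\partial_xV(x)|\les\la x\ra^{-1-}$ (respectively $\la x\ra^{-2-}$ for the weighted bound) and $\|\chi'_{(d,\infty)}\|_{L^1}\les1$ uniformly in $d$ are used. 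With this replacement of your single-vertex step, the rest of your bookkeeping --- the directed-block smallness $C_{V,\delta}\delta^M$, one factor $2^{-j}$ per undirected block, the at-most $(1+m)^{C_0}C^m$ growth of the amplitude derivatives and of the phase weight, and the final choice of $\delta$, $M$, $J$ --- matches the paper's proof and the two claimed bounds follow.
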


\begin{proof}
From the preceding discussion of directed products in Lemma~\ref{lem:directedprod}, it suffices to consider undirected products.  For an arbitrary undirected product, the block contains a factor of the form 
$$
\mR_d^1V(\mR_d^3V)^n\mR_d^2, \text{ or of the form } \mR_d^2V(\mR_d^3V)^n\mR_d^1
$$
for some $n\geq 0$. Therefore, the integral in some adjacent space variables is either of the form 
\begin{multline}\label{undirectibp}
\int_{\R^{n+1}}  e^{iz(x_0-x_1+x_{n+2}-x_{n+1}+\sum_{\ell=1}^n|x_\ell-x_{\ell+1}|)} f_1\chi_{(d,\infty)}(x_0-x_1)f_2\chi_{(d,\infty)}(x_{n+2}-x_{n+1})V(x_1) \times \\
\times \Big[\prod_{\ell=1}^{n} f_3(x_\ell-x_{\ell+1})  V(x_{\ell+1})\Big]  dx_1\ldots dx_{n+1},
\end{multline}
or a similar one with an harmless sign change in the phase.   Recall \eqref{resolventex1} and \eqref{eqn:Rd1} for the definitions of $f_j$.
Letting $\frac{x_\ell-x_{\ell+1}}2=u_\ell$, $\ell=1,\ldots,n$, and let $\frac{x_1+x_{n+1}}2=u_0$. We have (with $U:=\sum_{\ell=1}^n u_\ell$)
$$
x_1=U+u_0, x_2=U+u_0-2u_1, \ldots , x_{n+1}=U+u_0-2u_1-\cdots -2u_n= u_0-U.
$$  
We therefore rewrite the integral as
\begin{multline*}
\int_{\R^{n+1}}  e^{iz(x_0+x_{n+2} +\sum_{\ell=1}^n|u_\ell|)} e^{-2izu_0} f_1\chi_{(d,\infty)}(x_0-u_0-U)f_2\chi_{(d,\infty)}(x_{n+2}-u_0+U)V(u_0+U) \times \\
\times \Big[\prod_{\ell=1}^{n} f_3(u_\ell)  V(x_{\ell+1})\Big]  du_0 \ldots du_{n}.
\end{multline*}
Integrating by parts in $u_0$ variable, we obtain
\begin{multline*} =-\frac{i}{2z}  \int_{\R^{n+1}} e^{iz(x_0+x_{n+2} +\sum_{\ell=1}^n|u_\ell|)} e^{-2izu_0} f_1f_2 \big[\prod_{\ell=1}^{n} f_3(u_\ell)\big] \times \\
\times \partial_{u_0}\Big[ \chi_{(d,\infty)}(x_0-x_1) \chi_{(d,\infty)}(x_{n+2}-x_{n+1})V(x_1)  
\prod_{\ell=1}^{n}   V(x_{\ell+1})\Big]  du_0 \ldots du_{n}.
\end{multline*}
Note that $\partial_{u_0} x_{\ell} =\pm 1$, $\ell=1,\ldots,n+1$. Therefore, up to a constant, the integral in \eqref{undirectibp}
is equal to a sum of integrals each of the form $\frac1z$ times the integral in \eqref{undirectibp} with one of the potentials  or one of the cutoff functions $\chi_{(d,\infty)} $  replaced with its derivative.

We repeat this for each undirected $M$-block. After this we rewrite  the integral in  \eqref{highen_1} by removing all oscillatory factors from the product (where $x_0=x$, $x_{kM+r+1}=y$):
$$
\int_{\R^{kM+r+1}} e^{-it\sqrt{z^2+m^2}+iz\sum_{i=0}^{kM+r}|x_i-x_{i+1}|} \frac{z}{\sqrt{z^2+m^2}} \chi_j(z)  f_0 O_1\cdots O_k  [Vf_0]^r   dz dx_1\cdots dx_{kM+r}.
$$ 
Here each directed block $O_\ell$ consists of an $M$-fold  product of    $Vf_1\chi_{(d,\infty)}$ and $Vf_3$ or a product of  $f_2$'s and $f_3$'s. And each undirected block consists of a product of $Vf_*\chi_{(d,\infty)}$, with $f_*=f_1, f_2, f_3$, and one potential or a cutoff $\chi_{(d,\infty)}$  is replaced with $\frac{C}{z}$ times its derivative.   Note that there are $3^{Mk}$ choices for $O_1,\ldots O_k$.

By Lemma~\ref{lem:vdc}, we  bound the $z$ integral by 
$$
\frac{2^{\frac{3j}{2}}}{|t|^{\f12}}  \int_{|z|\approx 2^{j}} \int \big|\partial_z \big( \frac{z \chi_j(z)}{\sqrt{z^2+m^2}} f_0 O_1\cdots O_k  [Vf_0]^r \big) \big| dx_1\cdots dx_{kM+r} dz.
$$
Note that wherever the $z$ derivative hits, we gain a factor of $2^{-j}$. Therefore, we consider the case when the derivative hits the first term, and estimate the others similarly by product rule:  
$$
\les \frac{2^{\frac{3j}{2}}}{|t|^{\f12}} (MK+r+1)  \int \big|   f_0 O_1\cdots O_k  [Vf_0]^r   \big| dx_1\cdots dx_{kM+r},
$$
where $f_0$, $f_1$, $f_2$, $f_3$ are replaced with their supremum on $|z|\approx 2^j$. They are all of order 1 (uniformly in $j$), except one factor in each undirected product which is of order $2^{-j}$. 

Note that for an undirected product $|O_i(x_{\ell},x_{\ell+M})|$ can be bounded by a sum of $\les M$ terms of  the form
$$
2^{-j} C^M |V(x_{\ell})| \int \big( \prod_{n=\ell+1,n\neq i}^{\ell+M-1}|V(x_{n})| \big) 
\big|\partial_{x_i}  V(x_i) \big|   dx_{\ell+1}\ldots dx_{\ell+M-1} \leq 2^{-j} C_V^M |V(x_{\ell})|.
$$
or of the form 
$$
2^{-j} C^M |V(x_{\ell})| \int \big( \prod_{n=\ell+1}^{\ell+M-1}|V(x_{n})| \big) 
\big|\partial_{x_i}  \chi_{(d,\infty)}(x_{i-1}-x_i) \big|   dx_{\ell+1}\ldots dx_{\ell+M-1} 
\leq 2^{-j}  C_V^M |V(x_{\ell})|.
$$
The term when the derivative hits the cutoffs is harmless since $\|\chi_{(d,\infty)}^\prime\|_{L^1}\les 1$ uniformly in $d>0$.
Therefore, for undirected products we have 
$$|O_i(x_{\ell},x_{\ell+M})|\leq 2^{-j} C_V^M |V(x_{\ell})|,$$
uniformly in $j$ and $d>0$. 

For  directed products, using Lemma~\ref{lem:directedprod},  given $\delta>0$,   there exists  $ d(\delta) >0$  so that  we have 
$$
|O_i(x_{\ell},x_{\ell+M})|\leq C_{V,\delta } \delta^{ M} |V(x_\ell)|
$$
for all $M$. Finally we have 
$$|[Vf_0]^r(x_{\ell},x_{\ell+r})|\leq C_V^r|V(x_\ell)|.$$
Combining all these inequalities, and renaming the variables, we have the bound 
\begin{multline*}
\les \frac{2^{\frac{3j}{2}}}{|t|^{\f12}}  3^{Mk}  C_{V}^{ r}  \max\big( C_V^M2^{-j},C_{V,\delta}\delta^{ M}\big)^k  \int \big| V(x_1)\ldots  V(x_{k+1})   \big| dx_1\cdots dx_{k+1}\\
\les \frac{2^{\frac{3j}{2}}}{|t|^{\f12}}   C_{V}^{ M}  \max\big( (3C_V^2)^M2^{-j},C_{V,\delta}C_V (3\delta)^{ M}\big)^k.  
\end{multline*}
Recall from \eqref{highen_1} that $m=kM+r$ with $0\leq r<M$.
This is summable in $m$ by first choosing $\delta$ small, then $M$ large, then  $2^j$ large.

The weighted bound $ |t|^{-\frac32} 2^{\frac{3j}2} \la x\ra \la y\ra$ follows the same way by Lemma~\ref{lem:vdc} provided that 
$|V(x)|,|\partial_x V(x)| \les \la x\ra^{-2-}$. 

\end{proof}

We now seek to close the argument by controlling the intermediate energies.  We let $\psi_c(z)$ be a smooth cut-off function to a compact subset of the spectrum bounded away from the threshold.
\begin{prop}\label{prop:int}

	If $|V(x)|\les \la x\ra^{-\delta}$, for some $\delta>3$,  we have the bound
	\begin{align*}
	\bigg|\int_{\R} e^{-it\sqrt{z^2+m^2}} \frac{z\psi_c(z)(z)}{\sqrt{z^2+m^2}} [\mR_V(z)](x,y)\, dz\bigg| \leq C_0 \min(1, |t|^{-\f12} ),
	\end{align*}
	where the constant $C_0$ depends on the support of the cut-off $\psi_c(z)$.
	Furthermore, if $\delta>5$ we may bound the above integral by 
	$C_0\min(1, |t|^{-\f12},|t|^{-\frac32}  \max(\la x\ra, \la y\ra )   )$.
	
\end{prop}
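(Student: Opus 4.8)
The plan is to reduce the problem to the spectral region $|z|\approx \lambda_0$ for a fixed $\lambda_0$ bounded away from zero (since $\psi_c$ is compactly supported and bounded away from $z=0$), and to use the limiting absorption principle from Lemma~\ref{lem:LAP} together with the resolvent identity $\mR_V(z)=\mR_0(z)[I+V\mR_0(z)]^{-1}$ from \eqref{eq:res_iden}. On this compact, threshold-free region the operator $I+V\mR_0(z)$ is invertible in $L^{2,\sigma}\to L^{2,\sigma}$ uniformly, and — crucially — its inverse, together with two derivatives in $z$, is uniformly bounded there; this is where $|V(x)|\les\la x\ra^{-\delta}$ with $\delta>3$ (resp. $\delta>5$) enters, matching the two-derivative (resp. needing the extra weights for the $|t|^{-3/2}$ bound) requirement of Lemma~\ref{lem:LAP}. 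First I would write the kernel of $\mR_V(z)$ using \eqref{eqn:Rd1}, $\mR_0=\mR_d^1+\mR_d^2+\mR_d^3$, so that each factor of $\mR_0$ carries an explicit oscillatory phase $e^{\pm iz(x_i-x_{i+1})}$ or $e^{iz|x_i-x_{i+1}|}$; expanding $[I+V\mR_0(z)]^{-1}$ into its Neumann-type structure (or simply keeping it as a bounded operator whose kernel we bound in absolute value after extracting phases) we obtain a representation
$$
\big[\mR_V(z)\big](x,y)=\sum (\text{phase }e^{iz\Phi(x,y,\vec w)})\, a(z,x,y,\vec w),
$$
where $\Phi$ is a sum of terms of the form $\pm(x_i-x_{i+1})$ or $|x_i-x_{i+1}|$ and $a$ together with $\partial_z^k a$, $k=0,1,2$, is integrable against $\la\cdot\ra^{-\delta}$-type weights in the internal variables, uniformly in $z$ on the support of $\psi_c$.

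Next I would insert this into the $z$-integral and apply Lemma~\ref{lem:vdc} with $j=0$ (since $|z|\approx\lambda_0\approx 1$ on $\mathrm{supp}\,\psi_c$, the dyadic parameter is irrelevant and all the $2^{j/2}$ factors are harmless constants depending on $\mathrm{supp}\,\psi_c$). The first bound of Lemma~\ref{lem:vdc} gives the trivial $\les 1$ estimate; the second gives $|t|^{-1/2}$ provided $\|\partial_z(\,\cdot\,)\|_{L^1_z}$ of the amplitude is finite, which follows from the one-derivative bound on $[I+V\mR_0]^{-1}$ and hence needs $\delta>3$ (via Lemma~\ref{lem:LAP} with $k=1$, $\sigma>3/2$, $|V|\les\la x\ra^{-2\sigma-}$). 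For the weighted $|t|^{-3/2}\max(\la x\ra,\la y\ra)$ bound I would use the third estimate in Lemma~\ref{lem:vdc} (with $r=$ the relevant combination of distances appearing in $\Phi$), integrating by parts once in $z$ and tracking the factor $r\les\la x\ra+\la y\ra+(\text{internal weights})$; the internal weights are absorbed by the extra decay $\delta>5$ exactly as in the proof of Proposition~\ref{prop:Q} and Lemma~\ref{lem:S disp ests}, using $|\int_a^b s\,ds|\les|b-a|\max(a,b)$ to keep only one power of $\la x\ra\la y\ra$ outside and all higher internal powers against $|V|$.

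The main obstacle I anticipate is controlling $\partial_z^k[I+V\mR_0(z)]^{-1}$ uniformly on $\mathrm{supp}\,\psi_c$: the operator $\mR_0(z)$ and its $z$-derivatives do not decay in $z$, but on the compact threshold-free set this is not an issue — one has $\partial_z[I+V\mR_0]^{-1}=-[I+V\mR_0]^{-1}(V\partial_z\mR_0)[I+V\mR_0]^{-1}$ and $\|V\partial_z^k\mR_0(z)\|_{L^{2,\sigma}\to L^{2,\sigma}}\les 1$ for $\sigma>1/2+k$ with $|V|\les\la x\ra^{-2\sigma-}$ (the key point being that near a fixed nonzero energy the singular $1/z$ in \eqref{resolventex} is harmless, and the $z$-derivatives only produce polynomially-growing-in-$|x-y|$ kernels which are controlled by the potential's decay). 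Once uniform boundedness of two $z$-derivatives of the inverse is in hand, everything else is an application of Lemma~\ref{lem:vdc} exactly in the spirit of Section~\ref{sec:low disp}, so the remaining work is routine bookkeeping of weights. The absence of embedded eigenvalues/resonances on $\mathrm{supp}\,\psi_c$ — guaranteed by \cite{BC1} or by the combination of Lemma~\ref{lem:LAP} and \cite{GM} — is what makes $I+V\mR_0(z)$ invertible there in the first place.
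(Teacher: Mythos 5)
Your overall strategy (work on the compact, threshold--free support of $\psi_c$, use the limiting absorption principle for $z$-derivative bounds, and finish with Lemma~\ref{lem:vdc}) is in the right spirit, but the algebraic reduction you propose has a genuine gap. You base everything on $\mR_V(z)=\mR_0(z)[I+V\mR_0(z)]^{-1}$ and then claim a representation of the kernel of $\mR_V(z)(x,y)$ as a sum of explicit phases $e^{iz\Phi}$ times amplitudes with two $z$-derivatives integrable uniformly in $x,y$. Neither of your two routes to this representation works at intermediate energies: the Neumann-type expansion of $[I+V\mR_0(z)]^{-1}$ does not converge there (the paper only uses the series \eqref{eqn:hi series} for $|z|$ sufficiently large, where smallness comes from the directed/undirected product analysis), and if you instead keep $[I+V\mR_0]^{-1}=I-V\mR_V$ as a black box, then the term $\int \mR_0(z)(x,x_1)V(x_1)\mR_V(z)(x_1,y)\,dx_1$ carries no extracted phase and no decaying weight at the $y$-endpoint. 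Since $\partial_z^k\mR_V(z)(x_1,y)$ grows like $\la y\ra^k$ (this is exactly why Lemma~\ref{lem:LAP} requires $\sigma>\tfrac12+k$), the quantity $\|\partial_z(\text{amplitude})\|_{L^1_z}$ is not bounded uniformly in $y$; your scheme then fails to give even the unweighted $|t|^{-\f12}$ bound uniformly in $y$, and for the weighted bound it produces spurious extra powers of $\la y\ra$ beyond the single factor $\max(\la x\ra,\la y\ra)$ allowed in the statement.

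The paper closes this by iterating the resolvent identity twice, $\mR_V=\mR_0-\mR_0V\mR_0+\mR_0V\mR_V V\mR_0$, so that the unknown resolvent is sandwiched between two potentials and two explicit free resolvents: the first term is handled by Theorem~\ref{thm:giggles}, the second by Lemma~\ref{lem:hi single born} (only $V\in L^{1,1}$ needed), and in the third term the phases $e^{iz|x-x_1|}$ and $e^{iz|y_1-y|}$ are extracted from the outer free resolvents before differentiating, while $V(x_1)\partial_z^k\mR_V(z)(x_1,y_1)V(y_1)$ is summed in $L^1_{x_1,y_1}$ via Cauchy--Schwarz and Lemma~\ref{lem:LAP} --- this is where $\delta>3$ (one derivative, $\sigma>\tfrac32$) and $\delta>5$ (two derivatives, $\sigma>\tfrac52$) enter. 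If you replace your single-iteration reduction by this symmetric two-fold iteration, the rest of your outline (Van der Corput with $j$ bounded, tracking $r=|x-x_1|+|y_1-y|\les \la x_1\ra\la y_1\ra\max(\la x\ra,\la y\ra)$ and absorbing the internal weights into $V$) goes through essentially as in the paper.
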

When $|z|$ is bounded both above and below, we employ the standard resolvent identity twice to write
$$
\mR_V(z)=\mR_0(z)-\mR_0(z)V\mR_0(z)+\mR_0(z)V\mR_V(z)V\mR_0(z).
$$
Inserting this into the Stone's formula, \eqref{eqn:stone}, requires us to bound three integrals.  The first integral, with only $\mR_0(z)$ is bounded by Theorem~\ref{thm:giggles} noting that $2^j \les 1$ in this case.  

\begin{lemma}\label{lem:hi single born}
	
	If $V\in L^{1,1}$, we have the bound
	\begin{align*}
	\bigg|\int_{\R} e^{-it\sqrt{z^2+m^2}} \frac{z\psi_c(z)}{\sqrt{z^2+m^2}} [\mR_0(z)V\mR_0(z)](x,y)\, dz\bigg| \les \min(1, |t|^{-\f12} ,|t|^{-\frac32} \max(\la x\ra, \la y\ra )   )
	\end{align*}
	
\end{lemma}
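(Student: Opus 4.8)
The plan is to reduce the estimate to a one-parameter family of oscillatory integrals and apply Lemma~\ref{lem:vdc} directly. Recalling \eqref{resolventex1}, write $\mR_0(z)(x,y)=f_0(z,x,y)e^{iz|x-y|}$ with $f_0(z,x,y)=\frac{i}{2}\big[-\alpha\,\textrm{sgn}(x-y)+\frac{m\beta+\sqrt{z^2+m^2}I}{z}\big]$, so that the kernel of the product is
$$
[\mR_0(z)V\mR_0(z)](x,y)=\int_\R f_0(z,x,x_1)\,V(x_1)\,f_0(z,x_1,y)\,e^{iz(|x-x_1|+|x_1-y|)}\,dx_1 .
$$
After interchanging the $z$ and $x_1$ integrations, the quantity to bound becomes
$$
\int_\R \int_\R e^{-it\sqrt{z^2+m^2}+iz(|x-x_1|+|x_1-y|)}\,\frac{z\psi_c(z)}{\sqrt{z^2+m^2}}\,f_0(z,x,x_1)V(x_1)f_0(z,x_1,y)\,dz\,dx_1 .
$$

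Since $\psi_c$ is compactly supported and vanishes near $z=0$, I would decompose it into the finite sum of Littlewood--Paley pieces $\psi_c\chi_j$ appearing in Lemma~\ref{lem:vdc}; on each such piece the factor $\frac{\sqrt{z^2+m^2}}{z}$, and hence $f_0(z,x,x_1)$, $f_0(z,x_1,y)$, together with $\frac{z\psi_c(z)\chi_j(z)}{\sqrt{z^2+m^2}}$, are smooth in $z$ with their first two $z$-derivatives bounded uniformly in $x,x_1,y$ (the $\textrm{sgn}$-factors in $f_0$ are $z$-independent). Applying Lemma~\ref{lem:vdc} to the inner $z$-integral with $r=|x-x_1|+|x_1-y|$ and $\psi_j(z)=\frac{z\psi_c(z)\chi_j(z)}{\sqrt{z^2+m^2}}f_0(z,x,x_1)V(x_1)f_0(z,x_1,y)$ yields, after summing over the finitely many indices,
$$
\Big|\int_\R e^{-it\sqrt{z^2+m^2}+izr}\psi_j(z)\,dz\Big|\les \min\big(|V(x_1)|,\ |t|^{-\f12}|V(x_1)|,\ |t|^{-\f32}\la r\ra\,|V(x_1)|\big),
$$
where for the last bound one uses that $[\partial_{zz}+ir\partial_z]\big(\frac{\sqrt{z^2+m^2}}{z}\psi_j\big)$ is $O(\la r\ra\,|V(x_1)|)$ in $L^1_z$.

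It then remains to integrate in $x_1$. For the first two bounds, $\int_\R|V(x_1)|\,dx_1=\|V\|_{L^1}\les\|V\|_{L^{1,1}}$. For the weighted bound, the elementary inequality $\la r\ra=\la |x-x_1|+|x_1-y|\ra\les\la x\ra+\la y\ra+\la x_1\ra\les\max(\la x\ra,\la y\ra)\,\la x_1\ra$ gives $\int_\R\la r\ra\,|V(x_1)|\,dx_1\les\max(\la x\ra,\la y\ra)\,\|V\|_{L^{1,1}}$, and combining the three contributions produces the claimed bound $\min\big(1,|t|^{-\f12},|t|^{-\f32}\max(\la x\ra,\la y\ra)\big)$. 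I do not expect a genuine obstacle here; the only points needing care are that $\psi_c$ stays away from the threshold so that the $1/z$ in $f_0$ is harmless, and the weight bookkeeping $\la r\ra\les\max(\la x\ra,\la y\ra)\la x_1\ra$, which is exactly where the $L^{1,1}$ hypothesis on $V$ is consumed.
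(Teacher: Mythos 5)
Your proposal is correct and follows essentially the same route as the paper: interchange the $z$ and $x_1$ integrals, apply Lemma~\ref{lem:vdc} with $r=|x-x_1|+|x_1-y|$ and an amplitude carrying the factor $|V(x_1)|$, then integrate in $x_1$ using $\la r\ra\les\max(\la x\ra,\la y\ra)\la x_1\ra$ and $V\in L^{1,1}$. The explicit finite dyadic decomposition of $\psi_c$ is only a cosmetic difference from the paper's direct application of the oscillatory integral lemma on the compact support of $\psi_c$.
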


\begin{proof}
	
	We need to control the contribution of the iterated integral
	$$
	\bigg|\int_{\R^2} e^{-it\sqrt{z^2+m^2}} \frac{z\psi_c(z)}{\sqrt{z^2+m^2}} \mR_0(z)(x,x_1)V(x_1)\mR_0(z)(x_1,y)\, dx_1\,  dz\bigg|.
	$$
	A priori, using \eqref{resolventex} the integral in $x_1$ is bounded assuming that $V\in L^1$.  Using Fubini, we may integrate in $z$ first and let 
	$$
	\psi (z,x,y)= \frac{z \psi_c(z)(z)}{\sqrt{z^2+m^2}} \big[- \alpha     \,\textrm{sgn}(x-x_1) + \frac{1  }{ z}(  m \beta +\sqrt{z^2+m^2}I)\big] V(x_1) \big[- \alpha     \,\textrm{sgn}(x_1-y) + \frac{1  }{ z}(  m \beta +\sqrt{z^2+m^2}I)\big].
	$$
	Note that $\|\psi \|_{L^1}\les  |V(x_1)|$ and $\|\partial_z\psi \|_{L^1}\les |V(x_1)|$ uniformly in $x,y$. Therefore, using Lemma~\ref{lem:vdc}, and the fact that $V\in L^1$,  we can bound  by 
	$ \min(1, |t|^{-\f12} )$.  
	
	To obtain the weighted bound, we again apply Lemma~\ref{lem:vdc} (with $r=|x-x_1|+|x_1-y|$) noting that $j$ is bounded above,
	\begin{multline*}
	\big\| [\partial_{zz}+ir\partial_z]\big(\frac{\psi }z\sqrt{z^2+m^2}\big)\big\|_{L^1}\\
	=\big\| [\partial_{zz}+ir\partial_z]\big[- \alpha     \,\textrm{sgn}(x-x_1) + \frac{1  }{ z}(  m \beta +\sqrt{z^2+m^2}I)\big] V(x_1) \big[- \alpha     \,\textrm{sgn}(x_1-y) + \frac{1  }{ z}(  m \beta +\sqrt{z^2+m^2}I)\big]\psi_c(z)\big\|_{L^1}\\
	\les \la r\ra |V(x_1)|.
	\end{multline*}
	Assuming $V\in L^{1,1}$, we may bound the resulting $x_1$ integral by $\max(1,|x|,|y|)$ to obtain the desired result.
	
\end{proof}

\begin{lemma}\label{lem:RV int}
	
	If $|V(x)|\les \la x \ra^{-\delta}$ for some $\delta>3$, we have the bound
	\begin{align*}
		\bigg|\int_{\R} e^{-it\sqrt{z^2+m^2}} \frac{z\psi_c(z)(z)}{\sqrt{z^2+m^2}} [\mR_0(z)V\mR_V(z)V\mR_0(z)](x,y)\, dz\bigg| \les \min(1, |t|^{-\f12}).
	\end{align*}
	Furthermore, if $\delta>5$ we have
	\begin{align*}
		\bigg|\int_{\R} e^{-it\sqrt{z^2+m^2}} \frac{z\psi_c(z)(z)}{\sqrt{z^2+m^2}} [\mR_0(z)V\mR_V(z)V\mR_0(z)](x,y)\, dz\bigg| \les \min(1, |t|^{-\f12} ,|t|^{-\frac32}  \max(\la x\ra, \la y\ra )   ).
	\end{align*}
	
\end{lemma}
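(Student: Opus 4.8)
The plan is to combine the uniform limiting absorption principle with derivative bounds (Lemma~\ref{lem:LAP}) with the oscillatory integral estimate of Lemma~\ref{lem:vdc}. On the compact support of $\psi_c$, which is bounded away from $z=0$, we use \eqref{resolventex} to write $\mR_0(z)(x,y)=e^{iz|x-y|}\Phi(z,x,y)$, where the $2\times2$ matrix amplitude $\Phi$ satisfies $|\partial_z^k\Phi(z,x,y)|\les 1$ for $k=0,1,2$, uniformly in $x,y$ (the only $z$-singular piece is the $\frac{m}{z}(\beta+I)$ term, harmless away from $z=0$). Hence the kernel of $\mR_0(z)V\mR_V(z)V\mR_0(z)$ at $(x,y)$ equals $\int_{\R^2}e^{iz(|x-x_1|+|y_1-y|)}\Phi(z,x,x_1)V(x_1)\,\mR_V(z)(x_1,y_1)\,V(y_1)\Phi(z,y_1,y)\,dx_1\,dy_1$.

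The decisive step is to split each exponential according to the sign of its argument, $e^{iz|x-x_1|}=e^{izx}e^{-izx_1}\mathbf 1_{\{x_1<x\}}+e^{-izx}e^{izx_1}\mathbf 1_{\{x_1>x\}}$, and likewise for $e^{iz|y_1-y|}$. This expresses the kernel as a sum of four terms, in each of which the oscillation factors as $e^{iz(\pm x\pm y)}$, depending only on $x,y$, which we keep inside the van der Corput phase $-t\sqrt{z^2+m^2}+z(\pm x\pm y)$, times unimodular factors $e^{\mp izx_1}$, $e^{\mp izy_1}$, which we absorb into the amplitude. After pulling out the scalar $\frac{z\psi_c(z)}{\sqrt{z^2+m^2}}$, each resulting amplitude is a bilinear pairing of the form $\la\mR_V(z)h_y(z,\cdot),g_x(z,\cdot)\ra$ (up to conjugation and transposition of the matrix factors), with $g_x(z,x_1)=\mathbf 1_{\{x_1<x\}}e^{-izx_1}\Phi(z,x,x_1)V(x_1)$ and $h_y(z,y_1)=\mathbf 1_{\{y_1>y\}}e^{izy_1}V(y_1)\Phi(z,y_1,y)$ (signs depending on the term). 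Since $|e^{\mp izx_1}|=1$ and $\Phi$ is bounded, $\|g_x(z,\cdot)\|_{L^{2,\sigma}}+\|h_y(z,\cdot)\|_{L^{2,\sigma}}\les\|\la\cdot\ra^{\sigma}V\|_{L^2}$, finite as soon as $\sigma<\delta-\tfrac12$; by Lemma~\ref{lem:LAP} with $k=0$ (allowed for $\sigma>\tfrac12$) each amplitude, and hence the $z$-integrand, is then $O(1)$, giving the trivial bound $\les1$.

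For the $|t|^{-1/2}$ decay I would decompose $\psi_c$ into $O(1)$ dyadic pieces (on which $|z|\approx1$) and apply the second bound of Lemma~\ref{lem:vdc} to each term. Differentiating the amplitude in $z$: the term carrying $\partial_z\mR_V(z)$ is controlled by $\|\partial_z\mR_V(z)\|_{L^{2,\sigma}\to L^{2,-\sigma}}\les1$, which by Lemma~\ref{lem:LAP} needs $\sigma>\tfrac32$ and $|V(x)|\les\la x\ra^{-3-}$; the terms where $\partial_z$ lands on $e^{\mp izx_1}$ or $e^{\mp izy_1}$ bring down the integration variable and so require $\|\la\cdot\ra^{\sigma+1}V\|_{L^2}<\infty$, i.e.\ $\sigma+1<\delta-\tfrac12$; the remaining terms (hitting $\Phi$ or $\psi_c$) are harmless. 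All constraints hold when $\delta>3$, yielding $\les|t|^{-1/2}$. For the weighted estimate I would instead invoke the third bound of Lemma~\ref{lem:vdc}: here $r=\pm x\pm y$, so $|r|\les\max(\la x\ra,\la y\ra)$, and one needs $\partial_{zz}$ of the amplitude, hence $\|\partial_{zz}\mR_V(z)\|_{L^{2,\sigma}\to L^{2,-\sigma}}\les1$ ($\sigma>\tfrac52$) and weighted norms up to $\|\la\cdot\ra^{\sigma+2}V\|_{L^2}$; these hold for $\delta>5$, producing $\les|t|^{-3/2}\max(\la x\ra,\la y\ra)$. Taking the minimum of the three bounds finishes the proof.

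The main obstacle is precisely this splitting. One cannot leave the full phase $z(|x-x_1|+|y_1-y|)$ in the van der Corput phase, since at intermediate energies there is no pointwise control on $\mR_V(z)(x_1,y_1)$ and its $z$-derivatives, only the operator bounds of Lemma~\ref{lem:LAP}; and one cannot move the $x_1,y_1$-dependence out of the phase by differentiating $e^{iz|x-x_1|}$, since estimating the resulting $\int|V(x_1)|\,|\partial_z\mR_V(z)(x_1,y_1)|\,|V(y_1)|$ would require an absolutely-bounded-kernel version of the resolvent bound, which is unavailable. Separating the $(x,y)$-oscillation (kept in the phase) from the $(x_1,y_1)$-oscillation (absorbed as a unimodular weight inside the bilinear pairing) resolves both difficulties simultaneously, and explains why the spatial weight $\max(\la x\ra,\la y\ra)$ enters only in the faster $|t|^{-3/2}$ bound.
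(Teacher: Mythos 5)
Your proposal is correct and proves the stated bounds under the stated hypotheses, but it organizes the argument differently from the paper. Both proofs run on the same two inputs, the van der Corput bounds of Lemma~\ref{lem:vdc} and the derivative limiting absorption bounds of Lemma~\ref{lem:LAP} (so both thresholds $\delta>3$ and $\delta>5$ are driven, exactly as in your count, by $\partial_z\mR_V$ with $\sigma>\tfrac32$ and $\partial_z^2\mR_V$ with $\sigma>\tfrac52$). The difference is in how the spatial variables are treated: the paper keeps the full phase with $r=|x-x_1|+|y_1-y|$, applies Lemma~\ref{lem:vdc} pointwise in $(x_1,y_1)$ with an amplitude containing the kernel $\mR_V(z)(x_1,y_1)$, and then integrates in $x_1,y_1$, so the LAP is invoked on quantities of the form $\int |V(x_1)|\,|\partial_z^k\mR_V(z)(x_1,y_1)|\,|V(y_1)|\,dx_1\,dy_1$, with the weight $\max(\la x\ra,\la y\ra)$ emerging only after absorbing the extra $\la x_1\ra,\la y_1\ra$ factors into the decay of $V$; you instead sign-split $e^{iz|x-x_1|}$ and $e^{iz|y_1-y|}$ so that only $e^{iz(\pm x\pm y)}$ remains in the stationary-phase integral, and the inner oscillations become unimodular weights inside $L^{2,\sigma}$ test functions, so that the amplitude is a bilinear pairing controlled by the operator-norm LAP alone and the weight appears directly as $|r|\les\max(\la x\ra,\la y\ra)$. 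What each buys: your version never needs any pointwise or absolute-value control of the perturbed resolvent kernel, which is the cleaner bookkeeping; the paper's version is shorter and parallels Lemma~\ref{lem:hi single born}, and its implicit use of pointwise bounds on $|\partial_z^k\mR_V(z)(x_1,y_1)|$ at these intermediate energies is not actually fatal --- such bounds follow, for instance, from one more iteration of the resolvent identity ($\mR_V=\mR_0-\mR_0V\mR_0+\mR_0V\mR_VV\mR_0$) together with the same LAP --- so your parenthetical claim that this route is ``unavailable'' is overstated, even though your way of avoiding it is a legitimate and arguably tidier alternative.
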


\begin{proof}
	
	We need to control the contribution of the iterated integral
	$$
	\bigg|\int_{\R^3} e^{-it\sqrt{z^2+m^2}} \frac{z\psi_c(z)}{\sqrt{z^2+m^2}} \mR_0(z)(x,x_1)V(x_1)\mR_V(z)(x_1,y_1) V(y_1) \mR_0(z)(y_1,y)\, dx_1\, dy_1\,  dz\bigg|.
	$$
	A priori, the integral in the spatial variables is bounded assuming that $V\in L^{2,\f12+}$.  Using Fubini, we may integrate in $z$ first and let 
	$$
	\psi (z,x,y)= \frac{z \psi_c(z)}{\sqrt{z^2+m^2}} e^{-iz|x-x_1|}\mR_0(z)(x,x_1) V(x_1)  \mR_V(z)(x_1,y_1) V(y_1) e^{-iz|y_1-y|}\mR_0(z)(y_1,y)
	$$
	Note that $\|\psi\|_{L^1}\les |V(x_1)\mR_V(z)(x_1,y_1) V(y_1)|$ and 
	$$
	\|\partial_z\psi\|_{L^1}\les |V(x_1)\mR_V(z)(x_1,y_1) V(y_1)|+ |V(x_1) \partial_z\mR_V(z)(x_1,y_1) V(y_1)|
	$$ 
	uniformly in $x,y$. Therefore, using Lemma~\ref{lem:vdc}, Lemma~\ref{lem:LAP} and the facts that $V\in L^{2,\f32+}$ and $j$ is bounded above,  we can bound by 
	$ \min(1, |t|^{-\f12})$.  
	
	To obtain the weighted bound, we again apply Lemma~\ref{lem:vdc} (with $r=|x-x_1|+|x_1-y|$) noting that
	\begin{multline*}
	\big\| [\partial_{zz}+ir\partial_z]\big(\frac{\psi }z\sqrt{z^2+m^2}\big)\big\|_{L^1}\\
	=\big\| [\partial_{zz}+ir\partial_z]\big[e^{-iz|x-x_1|}\mR_0(z)(x,x_1) V(x_1)  \mR_V(z)(x_1,y_1) V(y_1) e^{-iz|y_1-y|}\mR_0(z)(y_1,y)\big]\psi_c(z)\big\|_{L^1}\\
	\les   \la r\ra \sum_{\ell=0}^1|V(x_1) \partial_z^\ell \mR_V(z)(x_1,y_1) V(y_1)|+ |V(x_1) \partial_{zz} \mR_V(z)(x_1,y_1) V(y_1)|.
	\end{multline*}
	Noting that we need $|V(x)|\les \la x\ra^{-5-}$ to use Lemma~\ref{lem:LAP} on the second derivative of the resolvent, we may bound the resulting spatial integral by $\max(1,|x|,|y|)$.
	
\end{proof}

\end{document}